\DeclarePairedDelimiter\abs{\lvert}{\rvert}
\DeclarePairedDelimiter\norm{\lVert}{\rVert}
\newtheorem{theorem}{Theorem}[section]
\newtheorem{proposition}[theorem]{Proposition}
\newtheorem{lemma}[theorem]{Lemma}
\theoremstyle{definition}
\newtheorem{remark}[theorem]{Remark}
\newtheorem{definition}[theorem]{Definition}
\tikzset{>=latex} 
\colorlet{myred}{red!80!black}
\colorlet{myblue}{blue!80!black}
\colorlet{mygreen}{green!60!black}
\colorlet{myorange}{orange!70!red!60!black}
\colorlet{mydarkred}{red!30!black}
\colorlet{mydarkblue}{blue!40!black}
\colorlet{mydarkgreen}{green!30!black}
\tikzstyle{node}=[thick,circle,draw=myblue,minimum size=22,inner sep=0.5,outer sep=0.6]
\tikzstyle{node in}=[node,green!20!black,draw=mygreen!30!black,fill=mygreen!25]
\tikzstyle{node hidden}=[node,blue!20!black,draw=myblue!30!black,fill=myblue!20]
\tikzstyle{node convol}=[node,orange!20!black,draw=myorange!30!black,fill=myorange!20]
\tikzstyle{node out}=[node,red!20!black,draw=myred!30!black,fill=myred!20]
\tikzstyle{connect}=[thick,mydarkblue] 
\tikzstyle{connect arrow}=[-{Latex[length=4,width=3.5]},thick, mydarkblue,shorten <=0.5,shorten >=1]
\tikzset{ 
	node 1/.style={node in},
	node 2/.style={node hidden},
	node 3/.style={node out},
}
\def\R{{\mathbb R}}
\newcommand\grad[1]{\nabla \, #1}
\begin{document}
\graphicspath{{Figures/}}
\title{Fredholm Neural Networks for inverse problems in elliptic PDEs}

\author[1, \thanks{Work done while at:
Department of Electrical Engineering and Information Technologies, \textit{University of Naples “Federico II”}, Naples, Italy}]{Kyriakos C. Georgiou}
\author[2,\thanks{{Corresponding author: \texttt{constantinos.siettos@unina.it}}}]{Constantinos Siettos}
\author[3,\thanks{{Corresponding author: \texttt{ayannaco@aueb.gr}}} 
]{Athanasios N.\ Yannacopoulos}

\affil[1]{Division of Applied Mathematics, \textit{Brown University}, Providence, USA}
\affil[2]{Dipartimento di Matematica e Applicazioni  ``Renato Caccioppoli'', \textit{University of Naples “Federico II”}, Naples, Italy }
\affil[3]{Department of Statistics and Stochastic Modelling and Applications Laboratory, \textit{Athens University of Economics and Business}, Athens, Greece }


\date{}
\maketitle 
\begin{abstract} 
Building on our previous work on Fredholm Neural Networks (Fredholm NNs/ FNNs) for solving integral equations, we extend the framework to inverse problems for linear and nonlinear elliptic partial differential equations. The proposed scheme consists of a custom-designed deep neural network (DNN) in which the number of layers, weights, biases and hyperparameters are computed in an explainable manner based on a fixed-point scheme, and we therefore refer to this as the Potential Fredholm Neural Network (PFNN). We first build the PFNN as a method for solving the forward problem, showing that this approach ensures both a  high accuracy and explainability, achieving small errors in the interior of the domain, and near machine-precision on the boundary. We then use this approach to solve inverse problems for elliptic PDEs, and provide a rigorous proof for the consistency of the scheme and error bounds for both the interior and boundary of the domain, tied directly to the architecture of the PFNN. In particular, we show that these error bounds depend on the approximation of the boundary function and the integral discretization scheme, both of which directly correspond to components of the Fredholm NN architecture. In this way, we construct an explainable scheme that provides accurate solutions to the inverse problems, whilst still explicitly respecting the boundary conditions, due to the architecture of the PFNN. 
We assess the performance of the proposed scheme for  linear and semi-linear elliptic PDEs in two and three dimensions.  
\end{abstract}	

{\bf Keywords}: Explainable machine learning for numerical analysis, Inverse problems for elliptic PDEs, Fredholm neural networks\\

\section{Introduction}

In recent years, both theoretical and technological advancements have laid the ground for major advancements in the field of scientific machine learning. Both forward and inverse problems in dynamical systems have been extensively studied, across multiple applications and domains. In particular, the introduction of PINNs \cite{raissi2019physics} and neural operators such as DeepONet \cite{lu2021learning}, as well as Fourier neural networks \cite{li2020Fourier}, graph-based neural operators \cite{li2020multipole,kovachki2023neural}, random feature models \cite{nelsen2021random}, and RandONets \cite{fabiani2024randonet} provide an arsenal of such scientific machine learning methods (for a review see, e.g., \cite{karniadakis2021physics, fabiani2024randonet}). 
Furthermore, other techniques that bridge methods from numerical analysis  with machine learning, aiming towards more computationaly efficient and explainable schemes have also been developed, such as Legendre neural networks (\cite{mall2016application}),  simulation-based techniques (\cite{frey2022deep}), Deep Ritz-Finite element methods (\cite{grekas2025deep}) and recently Kolmogorov-Arnold neural networks \cite{liu2024kan, toscano2025kkans}. These approaches have been applied to the solution of the forward problem for Ordinary/Algebraic differential equations (ODEs, DAEs) \cite{lu2021deepxde,ji2021stiff,kim2021stiff,de2022physics,fabiani2023parsimonious,koenig2024kan}, Partial Differential Equations (PDEs) \cite{han2018solving,raissi2019physics,pang2019neural,lee2020coarse,lu2021deepxde,chen2021solving,dong2021local,calabro2021extreme,fabiani2021numerical,dong2021local,dong2021modified,galaris2022numerical,laghi2023physics,lee2023learning,jacob2024spikans}, as well as to Partial Integro-differential Equations (PIDEs) \cite{zappala2022neural,yuan2022pinn, georgiou2024deep,fabiani2024task}.  
Similarly, seminal work has been done when considering Integral Equations (IEs) such as the Fredholm and Volterra IEs  with applications in various domains (see e.g., \cite{mikhlin2014integral}). Furthermore, IEs can be connected to ODEs and PDEs. In particular, the connection to PDEs is established via the Boundary Integral Equation (BIE) method (see e.g., \cite{kellogg2012foundations,hsiao2008boundary}). This connection has been considered in the context of neural networks in \cite{lin2021binet}. 
More recently, in \cite{meng2024solving} the Boundary Integral Type Deep Operator Network (BI-DeepONet) and Boundary Integral Trigonometric Deep Operator Neural Network (BI-TDONet) are developed, combining the DeepONet framework with the BIE framework to solve PDEs with an optimized training process. In  \cite{effati2012neural, guan2022solving, jafarian2013utilizing, qu2023boundary}, the authors use loss functions based on the residual between the model prediction and integral approximations to estimate the solution to FIEs using the standard learning approach for DNNs. In \cite{zappala2022neural} the authors introduce Neural Integral Equations, a machine learning framework for estimating integral operators. In \cite{solodskikh2023integral}, the authors presented Integral Neural Networks, which were applied to computer vision problems, and were based on the observation that a fully connected neural network layer essentially performs a numerical integration.\par 
Despite the multitude of schemes and applications, the complexity of DNNs is known to give rise to the so called ''black-box'' effect training, meaning that the  rigorous (not just computational) quantification of errors and the sensitivity of the output to the hyperparameters  is a difficult task. In addition, the standard  training approach via loss function optimization (which may even fail in some cases, as shown in \cite{wang2022and}), leads to weights and biases that have practically no explainability. Therefore, very recently, researchers construct neural network architectures that learn by emulating  iterative numerical analysis schemes. For example, based on the concept of Runge-Kutta NNs presented back in the 90s' \cite{rico1992discrete}, in  \cite{guo2022personalized,akrivis2024runge}, the authors construct deep neural networks, based on the Runge-Kutta family of integrators, to learn the solution of ODEs. In \cite{doncevic2024recursively}, the authors propose a recursively recurrent neural network (R2N2) superstructure that can learn numerical fixed point algorithms such as Krylov, Newton-Krylov and Runge-Kutta algorithms. In this line of research, in \cite{kopanivcakova2024deeponet}, the authors have introduced a hybrid scheme combining iterative numerical solvers in Krylov subspaces with preconditioning of DeepONets for the solution of parametrized steady-state linear PDEs. Finally, in \cite{zhang2024blending,kahana2023geometry}, a hybrid approach called HINTS (Hybrid Iterative Numerical Transferable Solver) for the solution of PDEs has been proposed. This approach integrates relaxation methods such as Jacobi, Gauss-Seidel, and multigrid techniques, with DeepONets, to enhance the convergence behavior across eigenmodes.
Of course, robust error analysis is of critical importance and an open challenging problem, in all aforementioned schemes. One of the most prevalent issues for the solution of PDEs, is the fact that machine learning methods, such as PINNs impose boundary conditions as constraints in the optimization problem. However, this can exhibit large errors near and on the boundaries (e.g., \cite{georgiou2024deep} studies such errors when considering Kolmogorov backward equations in credit risk). More generally, this issue exists in problems with more complex dynamics, as displayed in \cite{krishnapriyan2021characterizing}. Recently, important work has been done regarding error analysis for PINNs (and related) models. For example, in \cite{chantada2024exact} exact and approximate error bounds are given for PINNs solutions for non-linear first-order ODEs. In \cite{hillebrecht2022certified, hillebrecht2023rigorous} the authors provide upper bounds for the error incurred by PINNs that depend on the dynamics of the underlying PDE.  Such analysis has been recently done for operator learning (\cite{de2022generic}) and also combined with Runge-Kutta methods (\cite{stiasny2024error}). Additional mathematically rigorous treatments of errors for some types of PINNs can be found e.g., in \cite{de2022error, doumeche2023convergence}.  We also note that recent work provides rigorous definitions and analysis of stability and convergence for PINNs (\cite{gazoulis2023stability}) for PDEs. However, model architecture and hyperparameter tuning still relies mainly on ad hoc/heuristic approaches (see for example \cite{yu2020hyper,dong2021modified,galaris2022numerical,fabiani2023parsimonious,bolager2024sampling}).

Very recently in \cite{georgiou2024fredholm}, we introduced Fredholm NNs, integrating DNNs with the theory of fixed point schemes in the Banach space, for the solution of Fredholm IEs. Here, generalizing this explainable framework, we utilize the potential theory and the Boundary Integral Equation (BIE) method to construct explainable DNNs for the solution of forward and inverse problems for elliptic PDEs, which occur in a vast range of problems and applications, such as in fluid mechanics \cite{gilbarg1977elliptic}, optimal control \cite{casas2014optimal}, thermodynamics and quantum mechanics \cite{poppenberg2002existence}. We refer to this construction as the Potential Fredholm Neural Network (PFNN). In this work, we will focus on the use of PFNNs to solve inverse problems in elliptic PDEs. To do this, we provide details regarding the construction of the PFNN and rigorous convergence and error analysis results for PFNNs in the interior of the PDE domain as well as on the boundary. This analysis relies on the approximation of the boundary function and the integral discretization scheme, which directly correspond to specific components of the PFNN architecture. Using these results, we then apply PFNNs to inverse source problems for elliptic PDEs, by considering models for the unknown source terms which are ``fed'' into the PFNN.

The structure of the paper is as follows: in section \ref{preliminary}, we recall the main results pertaining to Potential Theory for PDEs and Fredholm NNs. In section \ref{main-section} we develop and present the proposed methodology. Specifically, we present the inverse source problem, we construct the Potential Fredholm Neural Networks for elliptic PDEs, and we prove a Universal Approximation Theorem, based on explicit error bounds deduced for the PFNNs. 
In section \ref{numerics-section}, we provide detailed numerical examples for 2D and 3D PDes and assess the performance of the proposed scheme, with detailing the errors in the interior and the on the boundary of the PDE domains. We conclude in section \ref{conclusion}. For completeness, we also present examples for forward problems in Appendix \ref{appendixx}.

\section{Mathematical Preliminaries}\label{preliminary}
We first present the methodology by detailing a rigorous construction of PFNNs using the Poisson and Helmholtz PDEs as representative paradigms. This will construct the basis for presenting the main focus of our work, which is the use of the PFNN for the solution of the inverse problem for elliptic PDEs. For clarity, we also provide a brief reminder regarding Potential Theory and Fredholm Neural Networks, as introduced in \cite{georgiou2024fredholm}. 
In what follows, we will be considering the Banach space $\mathcal{X}:= \mathcal{C}\big(\Omega, \mathbb{R} \big)$, endowed with the norm $\norm{f}:=\norm{f}_{C(\Omega; {\mathbb R})} = \sup_{x \in \Omega} |f(x)|$, with $x \in \mathbb{R}^d$ and $\Omega \subset \mathbb{R}^d$ a bounded domain, and the Hilbert space ${\mathcal H}:=L^{2}(\Omega ; \mathbb{R})$ endowed with the inner product $\langle f, g \rangle =\int_{\Omega} f(x) g(x) dx$. We remark that the results are generalizable to other spaces, e.g., $\mathcal{C}(\Omega, \mathbb{R}^n)$ or Lebesgue spaces.

\subsection{Fredholm Neural Networks}
In \cite{georgiou2024fredholm}, we used the above formulation to construct Fredholm NNs for the solution of the Fredholm Integral Equation which produces the boundary function $\beta(x), x \in \partial \Omega$. 
For completeness, here we present the Fredholm Neural Network that is used for the solution of the forward problems for (\ref{BIE}). 

Consider an integral operator $\mathcal{T}: \mathcal{X} \rightarrow \mathcal{X}$,  acting on $f$ as:
\begin{eqnarray}\label{OPERATOR}
    \mathcal{T}f(x)  =  \int_{\mathcal{A}}K(x,y)f(y)dy, \quad \forall \, x \in \Omega,
\end{eqnarray} 
for $\mathcal{A} \subset \R^p, p \leq d$, which is non-expansive, i.e.,:
\begin{eqnarray}
    \left\|\mathcal{T}f_1 - \mathcal{T}f_2\right\|_{\mathcal{A}}\leq q \left\|f_1 - f_2\right\|_{\mathcal{A}},
\end{eqnarray}
for some $q \leq 1$, and the corresponding Fredholm Integral Equation:
\begin{equation}\label{ie}
    f(x) = g(x) + \mathcal{T}f(x).
\end{equation}

\begin{definition}[Discretized Krasnosel'skii-Mann operator]
Consider a function $f$ satisfying the FIE \eqref{ie}. Consider a $y-$grid $Y = \{y_1, \dots y_N \} \subset {\cal A}$, $|Z| = N$, with quadrature volume per grid-point $\Delta y = \frac{\abs{A}}{N}$ and such that $x \in Y$. Then, we define the following discrete operator $\mathcal{P}_N$ representing a single iteration of the KM-algorithm using the discretized version of the integral operator 
$\mathcal{T}$
(with fixed $g :\mathbb{R} \rightarrow \mathbb{R}$), by:
\begin{flalign}\label{disc-op}
	(\mathcal{P}_{N, \kappa} f) (x) := (1-\kappa)f(x) &+ \kappa \big(g(x) + \sum_{i =1}^N K(x,y_j)f(y_j) \Delta y \big) \notag \\ &= \kappa g(x) + \sum_{j =1}^N f(y_j)\Big( K(x,y_{j})\kappa \Delta y + (1-\kappa) \mathbbm{1}_{\{y_j = x\}}\Big).
\end{flalign}
\end{definition}

Hence, we can obtain a the {\it $M$-layer} approximation of the solution to the FIE as the following representation arising from applying the discretized KM operator $\mathcal{P}_{N, \kappa}$ repeatedly, for the given choice of $g(x)$, $M$ times:
\begin{align} \label{m-operator}
f^{(M)}(x) = (\mathcal{P}_N^{(M)} g)(x) := \mathcal{P}_{N, \kappa_{M}} \circ (\mathcal{P}_{N, \kappa_{M-1}} \circ (\dots \circ (\mathcal{P}_{N, \kappa_{1}}g)(x))) \end{align}

\begin{proposition}
[Fredholm Neural Network construction \textcolor{teal}{\cite{georgiou2024fredholm}}]\label{DNN_construction}
Consider the integral operator $\mathcal{T}: \mathcal{X} \rightarrow \mathcal{X}$, as given in \eqref{OPERATOR} and the discretization grid $Y$, as well as and a constant $\kappa_n = \kappa$, $\kappa \in (0,1)$, for all $n \in {\mathbb N}$. Then, the solution to the linear FIE \eqref{ie} can be approximated by a deep neural network with input $x \in {\cal A}$, 
$M$ hidden layers, a linear activation function and a single output node, where the weights and biases are given by:
\begin{flalign}
    W_1 = \big(\begin{array}{ccc}
		\kappa g(y_1), \dots, \kappa g(y_{N})
	\end{array}\big)^{\top}, \,\,\,\,\    b_1 = \left(\begin{array}{ccc}
		0, 0, \dots, 0
	\end{array}\right)^{\top}
 \end{flalign} 
for the first hidden layer,  
\begin{eqnarray}	\label{inner-weight}
W_m=
\left(\begin{array}{cccc}
	K_D\left(y_1\right) & {K}\left(y_1, y_2\right)\kappa \Delta y & \cdots & {K}\left(y_1, y_{N}\right)\kappa \Delta y \\
 {K}\left(y_2, y_1\right)\kappa \Delta y & K_D\left(y_2\right) & \cdots & {K}\left(y_2, y_{N}\right)\kappa \Delta y \\
	\vdots & \vdots & \ddots & \vdots \\
	\vdots & \vdots & \vdots & \vdots \\
	{K}\left(y_{N}, y_1\right)\kappa \Delta y & {K}\left(y_{N}, y_2\right)\kappa \Delta y & \cdots & K_D\left(y_{N}\right) 
\end{array}\right),
\end{eqnarray}
and
\begin{eqnarray}
	b_m=\left(\begin{array}{ccc}
		\kappa g(y_1), \dots, \kappa g(y_{N})
	\end{array}\right)^{\top},
\end{eqnarray}
for hidden layers $m= 2, \dots, M-1$, where $K_D\left(y\right) := {K}\left(y, y\right)\kappa \Delta y + (1-\kappa)$, and:
\begin{flalign} \label{outer-weight}
	\begin{gathered}
		W_M=\big(\begin{array}{ccc}
			K(y_1, x)\kappa \Delta y, \dots, K(y_{i-1},x)\kappa \Delta y, K_D(x), K(y_{i+1}, x)\kappa \Delta y, \dots, K(y_{N}, x)\kappa \Delta y
		\end{array}\big)^{\top},
	\end{gathered}
\end{flalign}
$b_M =\big(\kappa g(x) \big)$, for the final layer, assuming $y_i = x$. 
\par Then, letting $\mathcal{N}_m$ be the map defined by $\mathcal{N}_m(x) = W_m x + b_m$, for $m=1,\dots, M$, the output of the neural network is given by:
\begin{equation}
    \hat{f}(x)
    = \Big(
        \mathcal{N}_M \circ \cdots \circ \mathcal{N}_1
      \Big)(x).
\end{equation}
\end{proposition}

This is the Fredholm Neural Network construction that implements the approximation \eqref{m-operator}. Based on the constructions above we can formulate the Universal Approximation Theorem for Fredholm Neural Networks (Fredholm NNs) below. 

\begin{theorem}[Fredholm NNs as universal approximators of linear FIE solutions \cite{georgiou2024fredholm}]\label{FUA}
Consider the linear integral operator, $\mathcal{T} : \mathcal{V} \rightarrow \mathcal{V}$ defined in \eqref{OPERATOR}, where ${\cal V}$ is either ${\cal X}$ or ${\cal H}$.
Consider the space $\mathcal{V'} \subset \mathcal{V}$ defined by $\mathcal{Z'}: = \{ f \in \mathcal{V}: g+ \mathcal{T}f = f, \text{ for some } g, K \}$. Then, for any $f \in {\mathcal Z}'$ and for any given approximation error $\epsilon$, there exists a fully connected Fredholm NN (denoted by $\mathcal{F}_M(x;g, K)$) with $M$ hidden layers as described in Proposition \ref{DNN_construction}, and activation function given by:
\begin{eqnarray}
 \sigma(x) =
\begin{cases}
   \kappa g(x), \text{ for layer }  m= 1, \\
    x, \text { for layer }  m \geq 2,   
\end{cases}
\end{eqnarray}
 such that $\|f(x) - \mathcal{F}_M(x;g, K)\|_{L^2(\Omega)} \leq \epsilon$. 
\end{theorem}
Note that hereinafter we will be using $\mathcal{F}_M(\cdot;g,K)$ to denote the Fredholm Neural Network, with the given choice for $g$ and $K$.



\color{black}

\subsection{Potential Theory}\label{sec:potential}
We begin by recalling the following theorem concerning the Poisson equation:


\begin{theorem}[\cite{folland2020introduction}]\label{BIM}
Consider $x\in \Omega \subset \mathbb{R}^d$ and the linear Poisson equation for $u(x)$:
    \begin{eqnarray}\label{poisson}
        \begin{cases}\Delta u(x) = \psi(x), & \text { for } x \in \Omega \\ u(x)= f(x) & \text { for } {x} \in \partial \Omega, \end{cases}
    \end{eqnarray}
with the smooth boundary $\partial \Omega$ is $C^2$. Its solution can be written via the double layer boundary integral given by:
\begin{eqnarray}\label{potential-integral}
   u(x) =  \int_{\partial \Omega} \beta(y) \frac{\partial \Phi}{\partial n_{y}}(x, y) d \sigma_{y} + \int_{\Omega} \Phi(x,y) \psi(y) d y , \,\, x \in \Omega,
\end{eqnarray}
where the fundamental solution is given by $\Phi(x,y) = \frac{1}{(d-2)\omega_d|x-y|^{d-2}}$,  for $d>2$ (and a function featuring a logarithmic singularity for $d=2$) with $\omega_d$  the volume of the $d-$dimensional unit sphere, $n_y$ is the outward pointing normal vector to $y$, $\sigma_y$ is the surface element at point $y\in \partial \Omega$, and $\frac{\partial \Phi}{\partial n_{y}} = n_y \cdot \grad_{ y}{\Phi}$. It can be shown that the following limit holds, as we approach the boundary: 
\begin{eqnarray}\label{BIE-limit}
\lim _{\substack{x \rightarrow x^{\star} \\ x \in \Omega}}   \int_{\partial \Omega} \beta(y) \frac{\partial \Phi}{\partial n_y}(x, y) d \sigma_{y} =u\left({x}^{\star}\right) - \frac{1}{2} \beta(x^{\star}), \quad x^{\star} \in \partial \Omega.
\end{eqnarray}
Hence, the function $\beta({x}^{\star})$, defined on the boundary, must satisfy the Boundary Integral Equation (BIE):
\begin{eqnarray}\label{BIE}
    \beta({x}^{\star}) = 2 \Big(f(x^{\star}) - \int_{\Omega
    } \Phi(x^*,y) \psi(y) dy \Big) - 2 \int_{\partial \Omega} \beta(y) \frac{\partial \Phi}{\partial n_{y}}(x^{\star}, y) d \sigma_{y}, \,\,\ x^{\star} \in \partial \Omega.
\end{eqnarray}
\end{theorem}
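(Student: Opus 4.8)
The plan is to establish the three assertions of the theorem in turn, separating the volume and boundary contributions. First I would recall that in two dimensions the fundamental solution is, up to a sign fixed by convention, $\Phi(x,y) = \tfrac{1}{2\pi}\ln|x-y|$, harmonic in $x$ for $x \neq y$, and I would analyse the Newtonian potential $w(x) := \int_\Omega \Phi(x,y)\psi(y)\,dy$: a standard argument (isolating the singularity on a small disc, differentiating under the integral, and using that $\psi$ is, say, H\"older continuous) shows $w \in C^1(\mathbb{R}^2)$, that $w$ extends continuously to $\overline\Omega$, and that $\Delta w = \psi$ in $\Omega$. Hence $v := u - w$ solves the homogeneous problem $\Delta v = 0$ in $\Omega$ with Dirichlet data $v = f - w$ on $\partial\Omega$, and the representation \eqref{potential-integral} is equivalent to the claim that this harmonic $v$ admits a double-layer representation $v(x) = \int_{\partial\Omega}\beta(y)\frac{\partial\Phi}{\partial n_y}(x,y)\,d\sigma_y$ for a suitable density $\beta \in C(\partial\Omega)$. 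That the right-hand side is harmonic in $\Omega$ for any continuous $\beta$ is immediate by differentiating under the integral, since $y \mapsto \Phi(x,y)$ is harmonic away from $y = x$ while $x \notin \partial\Omega$; adding $w$ back recovers \eqref{potential-integral}.

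The technical core is the jump relation \eqref{BIE-limit}. For $x$ near $x^\star$ I would split the boundary integral into a \emph{near} piece over $\partial\Omega \cap B_\delta(x^\star)$ and a \emph{far} piece over the complement; the far piece has a kernel jointly smooth in $(x,y)$ for $x$ close to $x^\star$, and so passes continuously to the limit $x \to x^\star$. For the near piece the essential ingredient is the Gauss lemma: $\int_{\partial\Omega}\frac{\partial\Phi}{\partial n_y}(x,y)\,d\sigma_y$ takes three distinct values according as $x$ lies inside, on, or outside $\partial\Omega$, the interior and boundary values differing by exactly $\tfrac12$ in absolute value; this identity follows by applying the divergence theorem to $\Phi(x,\cdot)$ on $\Omega$ with a small disc about $x$ removed and evaluating the flux through the small circle. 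Writing $\beta(y) = \beta(x^\star) + \big(\beta(y) - \beta(x^\star)\big)$ in the near integral, the constant piece reproduces in the limit the term $-\tfrac12\beta(x^\star)$ of \eqref{BIE-limit} (its sign being fixed by the conventions adopted for $n_y$ and for $\Phi$), while the contribution of the remainder $\beta(y) - \beta(x^\star)$ is controlled uniformly using the modulus of continuity of $\beta$ together with the weak singularity of the double-layer kernel on a $C^2$ boundary, where $(y - x)\cdot n_y = O(|y - x|^2)$. This establishes \eqref{BIE-limit}.

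Finally, to derive the Boundary Integral Equation \eqref{BIE} I would let $x \to x^\star \in \partial\Omega$ in \eqref{potential-integral}: the left-hand side tends to $u(x^\star) = f(x^\star)$ since $u$, solving the Dirichlet problem, is continuous on $\overline\Omega$; the volume term tends to $\int_\Omega\Phi(x^\star,y)\psi(y)\,dy$ by continuity of $w$; and the double-layer term is governed by \eqref{BIE-limit}. Collecting the terms and solving for $\beta(x^\star)$ yields exactly \eqref{BIE}. I expect the jump relation of the second paragraph to be the main obstacle, as it is the one step requiring genuine care with the singular kernel and with the uniformity of the limiting process, whereas the regularity of the Newtonian potential and the concluding rearrangement are routine. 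One further point deserves mention: \eqref{potential-integral} presupposes that a density $\beta$ solving \eqref{BIE} exists, which is supplied by Fredholm theory — the double-layer operator is compact on $C(\partial\Omega)$ (again via the $O(|y-x|^2)$ bound on its numerator), and the associated homogeneous equation has only the trivial solution by uniqueness for the interior Dirichlet and exterior Neumann problems — but since the statement is quoted from \cite{folland2020introduction}, this may simply be invoked.
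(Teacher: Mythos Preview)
The paper does not actually prove Theorem~\ref{BIM}: it is stated as a recalled result, attributed to \cite{folland2020introduction}, with no accompanying proof. So there is no ``paper's own proof'' against which to compare your proposal.

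That said, your outline is correct and is precisely the classical route one finds in potential-theory references such as Folland: reduce to the harmonic case via the Newtonian potential, verify harmonicity of the double-layer potential in the interior, establish the jump relation by the Gauss flux identity combined with the near/far splitting and the $O(|y-x|^2)$ cancellation in $(y-x)\cdot n_y$ on a $C^2$ boundary, and then read off the BIE by passing to the limit. Your remark that the existence of $\beta$ ultimately rests on Fredholm theory for the compact double-layer operator is also apposite, and indeed the paper simply takes this for granted.

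It is worth noting that the Gauss-lemma computation you identify as the ``technical core'' --- showing that $\int_{\partial\Omega}\frac{\partial\Phi}{\partial n_y}(x,y)\,d\sigma_y$ equals $1$ for $x\in\Omega$ and $\tfrac12$ for $x\in\partial\Omega$ --- is not proved in the paper for the Laplacian, but an analogous computation \emph{is} carried out in detail in the proof of Proposition~\ref{semi-linear-pot} for the Helmholtz operator (Steps~1 and~2 there), using exactly the excise-a-ball-and-apply-divergence argument you describe. So your instinct about where the real work lies matches what the authors themselves do when they need it.
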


The above theorem can be extended for other linear elliptic equations, as well as other boundary conditions, in order to obtain integral representations of their solutions  involving the boundary data. This has been a very successful approach in the numerical approximation of linear boundary value problems, called the Boundary Integral Method (BIM). Clearly, the kernel $\Phi$ is related to the differential operator defining the PDE (or rather its inverse), and it should be noted that it is in general not possible to obtain the fundamental solution analytically for general elliptic equations.

Fredholm Neural Networks applied to elliptic PDEs are constructed using potential theory. Their explainability is achieved by connecting the parameters of the network to the double layer potential, as well as by accounting for the behavior of the double layer potential integral in the limit as we approach the boundary, which allows us to obtain a ''smooth'' version of the integral representation, i.e., one that does not exhibit the jump relation, as given in Theorem \ref{BIM}, during the numerical implementation of the scheme. This approach was first applied for the Laplace PDE in \cite{georgiou2024fredholm}.

\section{Methodology: The Potential Fredholm Neural Network for Inverse Problems} \label{main-section}
One of the most important applications of Fredholm NNs is in the numerical approximation of elliptic PDEs, by utilizing the well known fact that under certain conditions, the inverse of an elliptic differential operator is a Fredholm type integral operator (see classic references such as \cite{mclean2000strongly,kress2014linear,evans2010partial}), thus leading to an equivalent representation of the PDE as an integral equation of the Fredholm type. This observation is also valid for boundary value problems (BVPs), reducing the PDE to an appropriate integral formulation (see section \ref{sec:potential}) which importantly ``hard-wires'' the boundary conditions in the formulation. In our previous work \cite{georgiou2024fredholm}, through a simple example based on the Laplace equation,  we have simply scratched the surface of the possibility of combining the boundary integral (BI) formulation of the boundary value problem with the Fredholm NNs (introduced in  \cite{georgiou2024fredholm}) for the solution of elliptic BVPs, thus yielding a powerful explainable Scientific Machine Learning (SciML) tool for the approximation of the solutions, with excellent reproducibility properties for the boundary condition. Here, we return to this idea, by extending it to other types of elliptic linear and nonlinear PDEs and using the framework to solve the inverse problem (see section \ref{inv-sec}). We study the mathematical properties (e.g. convergence etc) of the proposed approximation scheme, as well as provide a-priori estimates for the hyperparameters of the Fredholm NN. Our results indicate the useful interplay between numerical analysis and ML, and how revisiting new ML techniques under the lens of traditional numerical analysis can provide important insights and explainability to ML methods in the field of scientific computing.

\par We will be considering the inverse source problem for the Poisson, Helmholtz and semi-linear PDEs. In this setting, the inverse problem consists of a known boundary function $f : \partial \Omega \to {\mathbb R}$, as well as a coarse set of data points $\{ u(x_i) \}_{i=1,\cdots, n}$, $x_i \in \Omega$, and looking to approximate the unknown source function $\psi: \Omega \times \mathbb{R} \to {\mathbb R}$, using for a suitable model, represented by $\psi_{\theta}: \Omega \times \mathbb{R} \to {\mathbb R}$ (or $\psi: {\Omega} \rightarrow \mathbb{R}$) with parameters $\theta$ (e.g., a shallow neural network), such that the data $\{{u}(x_i) \}$ and function $f(x)$ satisfy the corresponding PDE. Note that the inverse source problem is well known to be an ill-posed problem, not admitting a unique solution. Indeed, there are infinitely many source terms that can satisfy the PDE, given data from the solution. Therefore, with this perspective, our goal is to learn the source term such that when used in the PDE, the resulting solution closely matches our data, rather than learning the exact source term function per se. 
\par The proposed methodology is based on an extension of the Fredholm Neural Network, which we refer to as the Potential Fredholm Neural Network (PFNN) that approximates the solution of elliptic PDEs, using a neural network construction based on the double layer potential formulation of the solution, as described below. Then, our approach consists of a model for the unknown source term, which is used as an input to the PFNN, and trained to match the solution data. We will see that this forward pass through the PFNN provides a training approach the yields explainable and highly accurate results. 
\par We begin with the mathematical construction of the PFNNs for the classes of elliptic PDEs we will be considering.

\subsection{Construction of the Potential Fredholm Neural Networks}
\subsubsection{The Poisson PDE}\label{section-PFNN-Poisson}

 We begin by considering the Poisson PDE. Regarding this construction, recall that the Fredholm NN approximates the method of successive approximations for the solution of a Fredholm Integral Equation of the second kind. Accoding to the potential theorm for the Poisson PDE (see Theorem \ref{BIM}) the BIE (\ref{BIE}) is of this form and therefore, a Fredholm NN is the first component of the proposed framework, followed by an additional layer that simulates the calculation performed by the new representation of the double layer potential. This is outlined in the following main result:

\begin{proposition}[PFNN construction for the Poisson PDE]\label{prop-poisson}
The Poisson PDE (\ref{poisson}) can be solved using a Fredholm NN, with the $d-$dimensional input $x \in \Omega$, and with $M+1$ hidden layers, 
for which the final and output weights $W_{M+1} \in \mathbb{R}^{N \times N}, W_O \in \mathbb{R}^N$ are given by:
\begin{eqnarray}\label{weights-poisson}
    W_{M+1}= I_{N \times N},
    \,\,\,\,\
    W_{O}= \left(\begin{array}{cccc}
	\mathcal{D} \Phi({x}, {y}_1)\Delta \sigma_{y}, & \mathcal{D} \Phi({x}, {y}_2)\Delta\sigma_{y}, & \dots, & \mathcal{D} \Phi({x}, {y}_N) \Delta \sigma_{y}
\end{array}\right)^{\top},
\end{eqnarray}
where we define the simple operator $\mathcal{D} \Phi({x}, {y}):= \Big(\frac{\partial \Phi}{\partial n_{y}}({x}, {y})- \frac{\partial \Phi}{\partial n_{y}}({x}^{\star}, {y})\Big)$. The corresponding biases $b_{M+1} \in \mathbb{R}^{N}$ and $b_O \in \mathbb{R}$ are given by:
\begin{eqnarray}\label{biases-poisson}
   b_{M+1} = \left(\begin{array}{ccc}
		- \beta({x}^{\star}), \dots, - \beta({x}^{\star})
	\end{array}\right)^{\top}, \,\,\,\  
 b_O= \frac{1}{2} \beta({x}^{\star}) + \int_{\partial \Omega} \beta(y) \frac{\partial \Phi(x^*, y)}{\partial n_y} d\sigma_y + \int_{\Omega} \Phi({x},{y}) \psi(y) dy,
\end{eqnarray}
where $x^* = \text{argmin}_{y \in \partial \Omega} \|x - y\|$ is the unique point on the boundary, such that $x \rightarrow x^*$. 
\end{proposition}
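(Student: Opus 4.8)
The plan is to verify that, once the weights and biases \eqref{weights-poisson}--\eqref{biases-poisson} are appended to the Fredholm NN recalled above, the forward pass of the network evaluates a discretization of the double-layer representation \eqref{potential-integral} of Theorem \ref{BIM}. I would organize the argument according to the three blocks of layers: the first $M$ layers (the Fredholm NN producing the density), the $(M{+}1)$-th layer (the boundary shift), and the output layer (the potential evaluation).

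First I would observe that the BIE \eqref{BIE} for the density $\beta$ is precisely a Fredholm integral equation of the second kind of the form \eqref{OPERATOR}, with data $g(x^{\star})=2\big(f(x^{\star})-\int_{\Omega}\Phi(x^{\star},y)\psi(y)\,dy\big)$ and kernel $K(x^{\star},y)=-2\,\frac{\partial\Phi}{\partial n_y}(x^{\star},y)$ on $\mathcal A=\partial\Omega$; since the associated operator is assumed non-expansive, the Fredholm NN construction applies verbatim, so the first $M$ hidden layers — the iteration layers \eqref{inner-weight} set on the boundary grid $\{y_1,\dots,y_N\}$, kept $N$-dimensional rather than reduced to the scalar read-out — output a vector $\hat\beta=(\hat\beta(y_1),\dots,\hat\beta(y_N))^{\top}$ approximating $\beta$ sampled on the grid, with error governed by $q$ and $M$. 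The $(M{+}1)$-th layer, with $W_{M+1}=I_{N\times N}$ and $b_{M+1}=-\beta(x^{\star})\mathbf 1$, then returns the boundary-subtracted density $\big(\hat\beta(y_j)-\beta(x^{\star})\big)_{j=1}^{N}$, and the output layer returns $\sum_{j=1}^{N}\mathcal D\Phi(x,y_j)\,\Delta\sigma_y\,\big(\hat\beta(y_j)-\beta(x^{\star})\big)+b_O$.

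Second I would identify the limit of this expression. The sum is the $N$-point quadrature (with weight $\Delta\sigma_y$) of $\int_{\partial\Omega}\mathcal D\Phi(x,y)\big(\beta(y)-\beta(x^{\star})\big)\,d\sigma_y$; expanding $\mathcal D\Phi(x,y)=\frac{\partial\Phi}{\partial n_y}(x,y)-\frac{\partial\Phi}{\partial n_y}(x^{\star},y)$ produces four terms, two of which carry the factor $\int_{\partial\Omega}\frac{\partial\Phi}{\partial n_y}(x,y)\,d\sigma_y$, i.e. the double-layer potential of the unit density, which takes the known value ($1$ for $x\in\Omega$ and $\tfrac12$ for $x\in\partial\Omega$ in the fundamental-solution normalization). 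Collecting terms, the integral reduces to $\int_{\partial\Omega}\frac{\partial\Phi}{\partial n_y}(x,y)\beta(y)\,d\sigma_y-\int_{\partial\Omega}\frac{\partial\Phi}{\partial n_y}(x^{\star},y)\beta(y)\,d\sigma_y-\tfrac12\beta(x^{\star})$, and adding the bias $b_O$ of \eqref{biases-poisson} cancels the last two terms, leaving exactly $\int_{\partial\Omega}\frac{\partial\Phi}{\partial n_y}(x,y)\beta(y)\,d\sigma_y+\int_{\Omega}\Phi(x,y)\psi(y)\,dy=u(x)$ by \eqref{potential-integral}. I would also check the boundary case $x=x^{\star}$ directly: there $\mathcal D\Phi(x^{\star},\cdot)\equiv 0$, so the output collapses to $b_O$, which by \eqref{BIE} (equivalently the jump relation \eqref{BIE-limit}) equals $f(x^{\star})$ up to the error in $\beta$ alone — this is what underlies the near machine-precision on $\partial\Omega$. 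Keeping $x^{\star}=(1,\phi)$ attached to $x=(r,\phi)$ throughout, the claimed identification of weights and biases then follows.

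The step I expect to be the main obstacle is the second one, specifically justifying that the kernel subtraction $\mathcal D\Phi$ combined with the density shift $-\beta(x^{\star})$ genuinely regularizes the near-singular behaviour of the double-layer kernel as $x\to x^{\star}$, so that the fixed quadrature on $\{y_j\}$ reproduces the integral uniformly up to the boundary and the reconstruction of \eqref{potential-integral} is valid — not merely pointwise in the open interior. This uniform control, together with the propagated Fredholm-NN error in $\hat\beta$ and the volume-integral quadrature, is exactly what the explicit interior and boundary bounds of Section \ref{error-section} are designed to quantify. The volume term $\int_{\Omega}\Phi(x,y)\psi(y)\,dy$ (the novelty relative to the Laplace case of earlier work) enters only additively — once through $g$ in the BIE and once through $b_O$ — and introduces no new structural difficulty.
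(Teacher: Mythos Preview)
Your proposal is correct and follows essentially the same approach as the paper: both arguments use the first $M$ layers to solve the BIE \eqref{BIE} for $\beta$, then exploit the identities $\int_{\partial\Omega}\frac{\partial\Phi}{\partial n_y}(x,y)\,d\sigma_y=1$ for $x\in\Omega$ and $=\tfrac12$ for $x\in\partial\Omega$ to show that the shifted-density/shifted-kernel expression plus $b_O$ collapses to the double-layer representation \eqref{potential-integral}. The only cosmetic difference is direction --- the paper first derives the regularized representation \eqref{potential-form-1} by writing $u(x)$ and $u(x^{\star})$ separately and subtracting, then reads off the weights and biases, whereas you start from the given weights and biases, compute the forward pass, and expand $\int_{\partial\Omega}\mathcal D\Phi(x,y)(\beta(y)-\beta(x^{\star}))\,d\sigma_y$ into four terms to recover $u(x)$; the underlying algebra and the key identities are identical.
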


\begin{proof}
In \cite{georgiou2024fredholm}, we have demonstrated that a Fredholm NN with $M$ hidden layers can be constructed to solve the integral equation (\ref{BIE}), producing the estimate $\beta_M(x)$.
We know that the fundamental solution of the Laplace operator satisfies
$\Delta_y \Phi(x,y)=\delta(x-y),$
and is given by:
\begin{equation}
  \Phi(x,y)=
\begin{cases}
\dfrac{1}{2\pi}\log |x-y|, & d=2,\\[1em]
-\dfrac{1}{(d-2)\omega_d |x-y|^{d-2}}, & d\geq 3,
\end{cases}  
\end{equation}
where \(\omega_d\) denotes the surface measure of the unit sphere in
$\mathbb R^d$. The double layer potential representation of the solution (see Theorem \ref{BIM}) is given by:
\begin{equation}
    u(x) = \int_{\partial \Omega} \beta(y) \frac{\partial \Phi(x, y)}{\partial n_y} d\sigma_y+ \int_{\Omega} \Phi(x, y) \psi(y) dy,
\end{equation}
where $\beta$ is a density defined on the boundary $\partial \Omega$, such that:
\begin{equation}
    f(x) = \int_{\partial \Omega} \beta(y) \frac{\partial \Phi(x, y)}{\partial n_y} d\sigma_y + 
    \int_{\Omega} \Phi(x, y) \psi(y) dy + \frac{1}{2} \beta(x), \quad \text{for} \ x \in \partial \Omega,
\end{equation}
which can be written as a Fredholm Integral Equation as:
\begin{flalign}
 \beta(x) = 2 \left(f(x) - \int_{\Omega} \Phi(x, y) \psi(y) dy \right) - 2 \int_{\partial \Omega} \beta(y) \frac{\partial \Phi(x, y)}{\partial n_y} d\sigma_y.
\end{flalign}
It can easily be shown that the kernel is non-expansive (see detailed calculations in section \ref{numerics-section}), as required under the Fredholm NN framework. Solving the BIE is the first step in the process. 


Next, we proceed with the calculation of the double layer potential in order to retrieve the solution of the PDE. Our approach relies on the underlying mathematical computations employing the dynamics of the double layer potential as we approach the boundary. Consider $x \in \Omega$ and $x^* \in \partial \Omega$, such that $x^* = \text{argmin}_{y \in \partial \Omega} \|x - y\|$, and we have:
\begin{equation}\label{res0}
    u(x) = \int_{\partial \Omega} \big( \beta(y) - \beta(x^*) \big) \frac{\partial \Phi(x, y)}{\partial n_y} d\sigma_y + \beta(x^*) + \int_{\Omega} \Phi(x, y) \psi(y) dy,
\end{equation}
where we have used:
\begin{equation}\notag 
    \int_{\partial \Omega} \frac{\partial \Phi(x, y)}{\partial n_y} d\sigma_y = 1, \quad \text{for} \ x \in \Omega.
    \end{equation}
Similarly, for $x^* \in \partial \Omega$:
\begin{equation}
    u(x^*) = f(x^*) = \int_{\partial \Omega} \big(\beta(y) - \beta(x^*) \big) \frac{\partial \Phi(x^*, y)}{\partial n_y} d\sigma_y + \frac{1}{2} \beta(x^*) + \int_{\Omega} \Phi(x^*, y) \psi(y) dy,
\end{equation}
where we have used that:
\begin{equation}\label{important-identity}
    \int_{\partial \Omega} \frac{\partial \Phi(x^*, y)}{\partial n_y} d\sigma_y = \frac{1}{2}, \quad \text{for} \ x^* \in \partial \Omega.
\end{equation}

Adding and subtracting $\frac{\partial \Phi(x^*,y)}{\partial n_y}$ within the first integral of \eqref{res0} and simplifying, we obtain:
\begin{flalign}\label{potential-form-1}
    u(x) = \int_{\partial \Omega} \big( \beta(y) -  \beta(x^*) \big)
    \Big(\frac{\partial \Phi(x, y)}{\partial n_y} & - \frac{\partial \Phi(x^*, y)}{\partial n_y} \Big) d\sigma_y + \frac{1}{2} \beta(x^*) \notag \\  & + \int_{\partial \Omega} \beta(y) \frac{\partial \Phi(x^*, y)}{\partial n_y} d\sigma_y + \int_{\Omega} \Phi(x, y) \psi(y) dy.
\end{flalign}
 The components of the expression can be used to construct the corresponding weights and biases of for the final hidden layer of the Potential Fredolm NN, as given in \eqref{weights-poisson} and \eqref{biases-poisson}. 
\end{proof}

\begin{remark}
    As a specific example, the above can be simplified to:
\begin{equation}\label{potential-form-poisson}
    u(x) = \int_{\partial \Omega} \big( \beta(y) - \beta(x^*) \big) 
    \left( \frac{\partial \Phi(x, y)}{\partial n_y} - \frac{1}{4 \pi} \right) d\sigma_y
    + \frac{1}{2} \beta(x^*) + \int_{\partial \Omega} \frac{1}{4\pi} \beta(y) d\sigma_y + \int_{\Omega} \Phi(x, y) \psi(y) dy,
\end{equation}
in the case of the Poisson PDE in $d=2$, where the fundamental solution is given by $\Phi(x,y) = \frac{1}{2\pi}\ln{|x-y|}$.
\end{remark}

\begin{remark}
  From a mathematical standpoint, this representation allows us to approach the boundary without the abrupt effects of the explicit jump component that the double layer potential exhibits. Instead, we are able to use a consistent representation that holds both in the domain and that identically satisfies the boundary conditions, as required.
\end{remark}

\par The result above is also illustrated in Fig. \ref{fig:LIC-diagram}. 

\begin{figure}[ht]
    \centering
    \includegraphics[ width=0.85\textwidth]{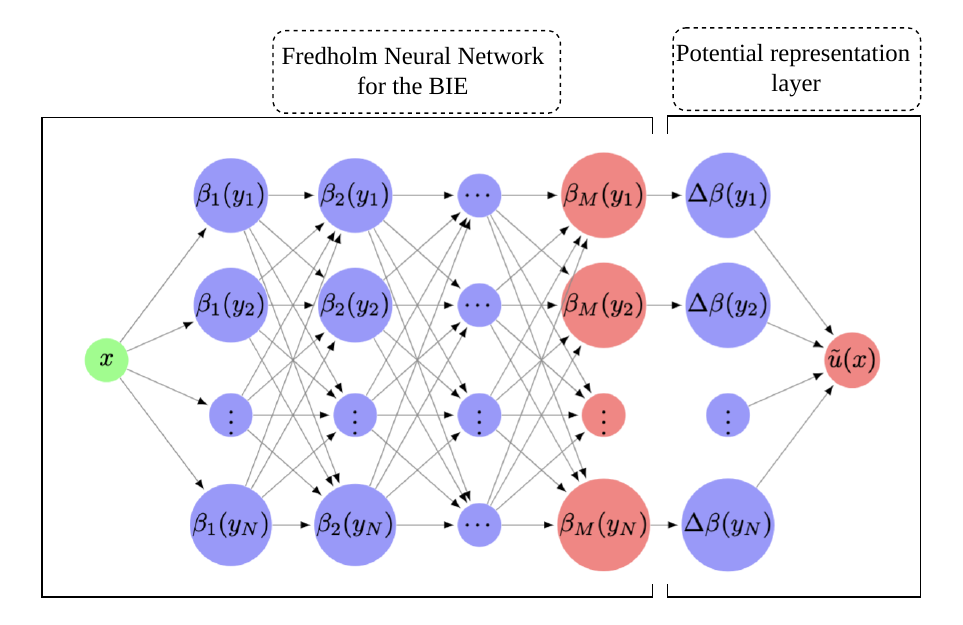}
    \caption{Schematic of the PFNN construction. The first component is a Fredholm Neural Network and the second encapsulates the representation of the double layer potential given in (\ref{potential-form-1}), decomposed into a the final hidden layer.} \label{fig:LIC-diagram}
\end{figure}

\subsubsection{The Helmholtz PDE}\label{semi-linear-pde-section}
We now consider the Helmholtz PDE, which is of the form:
\begin{eqnarray}\label{helmholtz-pde}
\begin{cases}
 \Delta u(x) - \lambda u(x) = \psi(x), \quad x \in \Omega \\ 
u(x) = f(x), \quad x \in \partial \Omega.   
\end{cases}
\end{eqnarray}

Theorem \ref{BIM} holds for the PDE (\ref{helmholtz-pde}), however the fundamental solution changes, and we have:
\begin{equation}\label{helm-fund}
   \Phi(x, y) = -\frac{1}{(2\pi)^{d/2}} \left( \frac{\sqrt{\lambda}}{|x - y|} \right)^{\frac{d-2}{2}} K_{\frac{d-2}{2}}(\sqrt{\lambda} |x - y|), 
\end{equation}
 where $K_{\nu}$ is the modified Bessel function of the second kind with index $\nu$. Even though many of the calculations for the PFNN construction remain the same as in the linear Poisson PDE, we provide the detailed calculations below, as the change in the fundamental solution leads to different handling of the double layer potential formulation, and subsequent changes to the neural network construction. We now prove the following proposition. 


\begin{proposition}\label{semi-linear-pot}
The Helmholtz PDE \eqref{helmholtz-pde} can be solved using a Fredholm NN with $M+1$ hidden layers, where the input is $x \in \Omega \subset \mathbb{R}^d$ and the first $M$ hidden layers solve the BIE (\ref{BIE}) on a discretized grid of the boundary, $y_1, \dots, y_N$. The final hidden and output layers are constructed according to the Fredholm NN representation of the double layer potential for PDE \eqref{helmholtz-pde}, with weights $W_{M+1} \in \mathbb{R}^{N \times N}, W_O \in \mathbb{R}^N$ given by:
\begin{eqnarray}
    W_{M+1}= I_{N \times N},
    \,\,\,\,\
    W_{O}= \left(\begin{array}{cccc}
	\mathcal{D} \Phi({x}, {y}_1)\Delta \sigma_{y}, & \mathcal{D} \Phi({x}, {y}_2)\Delta\sigma_{y}, & \dots, & \mathcal{D} \Phi({x}, {y}_N) \Delta \sigma_{y}
\end{array}\right)^{\top},
\end{eqnarray}
where, $\mathcal{D} \Phi({x}, {y}_i)$ as defined in Proposition \ref{prop-poisson}. The corresponding biases $b_{M+1} \in \mathbb{R}^{N}$ and $b_O \in \mathbb{R}$ are given by:
\begin{flalign}
   &b_{M+1} = \left(\begin{array}{ccc}
		- \beta({x}^{\star}), \dots, - \beta({x}^{\star})
	\end{array}\right)^{\top}, \\ 
 &b_O(x)= \Big(\frac{1}{2} + \int_{\Omega} \lambda \delta \Phi(x, y) dy \Big) \beta({x}^{\star}) + \int_{\partial \Omega} \beta(y) \frac{\partial \Phi(x^*, y)}{\partial n_y} d\sigma_y + \int_{\mathcal{D}} \Phi({x},{y}) f({y}) d {y},
\end{flalign}
where we define $\delta\Phi(x,y) := \Phi(x,y) - \Phi(x^*,y)$, and $x^* = \text{argmin}_{y \in \partial \Omega} \|x - y\|$, such that $x \rightarrow x^*$.
\end{proposition}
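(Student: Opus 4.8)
The plan is to mirror the proof of Proposition \ref{prop-poisson} step by step, tracking carefully where the new fundamental solution $\Phi(x,y) = -\frac{1}{2\pi}K_0(\lambda|x-y|)$ changes the computation. First I would record the analogue of Theorem \ref{BIM} for the Helmholtz operator: since $\Delta\Phi - \lambda\Phi = \delta(x-y)$ for this $\Phi$, the representation formula \eqref{potential-integral} still holds, but now the double-layer kernel $\partial\Phi/\partial n_y$ no longer integrates to the clean constants $1$ (interior) and $\tfrac12$ (boundary). Instead, applying the divergence theorem to $\int_{\partial\Omega}\partial_{n_y}\Phi(x,y)\,d\sigma_y = \int_\Omega \Delta_y\Phi(x,y)\,dy$ and using $\Delta_y\Phi = \delta(x-y) + \lambda\Phi$ produces an extra volume term $\int_\Omega \lambda\Phi(x,y)\,dy$. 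This is exactly the origin of the correction factor $\big(\tfrac12 + \int_\Omega \lambda\Delta\Phi(x,y)\,dy\big)$ appearing in $b_O$ (with $\delta\Phi$ and $\Delta\Phi$ notation as in the statement), so I would make this identity precise first.

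Next I would repeat the subtraction argument from Proposition \ref{prop-poisson}: write $u(x)$ and $u(x^\star)$ using the identity $\int_{\partial\Omega}\beta(y)\partial_{n_y}\Phi\,d\sigma_y = \int_{\partial\Omega}(\beta(y)-\beta(x^\star))\partial_{n_y}\Phi\,d\sigma_y + \beta(x^\star)\int_{\partial\Omega}\partial_{n_y}\Phi\,d\sigma_y$, substitute the corrected values of $\int_{\partial\Omega}\partial_{n_y}\Phi\,d\sigma_y$ at an interior point $x$ and at $x^\star\in\partial\Omega$, and subtract to obtain a representation of $u(x)$ that holds as $x\to x^\star$ and that identically satisfies the boundary condition at $r=1$. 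Grouping terms then gives: a sum over the boundary grid of $\mathcal{D}\Phi(x,y_i)\Delta\sigma_y$ acting on $\beta(y_i) - \beta(x^\star)$, which yields $W_O$ and the bias $b_{M+1} = (-\beta(x^\star),\dots,-\beta(x^\star))^\top$ feeding into the identity map $W_{M+1} = I_{N\times N}$; plus the leftover scalar terms $\big(\tfrac12 + \int_\Omega\lambda\Delta\Phi\big)\beta(x^\star) + \int_{\partial\Omega}\beta(y)\partial_{n_y}\Phi(x^\star,y)\,d\sigma_y + \int_{\mathcal D}\Phi(x,y)f(y)\,dy$, which is $b_O$.

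The remaining piece is to confirm that the first $M$ layers are legitimately a Fredholm NN for the Helmholtz BIE, i.e. that the operator $\mathcal T\beta(x^\star) = 2(f(x^\star) - \int_\Omega\Phi\psi) - 2\int_{\partial\Omega}\beta(y)\partial_{n_y}\Phi(x^\star,y)\,d\sigma_y$ is non-expansive in $\mathcal X$. I would check that the bound $\sup_{x^\star}\int_{\partial\Omega}|\partial_{n_y}\Phi(x^\star,y)|\,d\sigma_y$ is controlled by a constant strictly below $\tfrac12$ (so that the factor $2$ still gives $q<1$) for the geometries and $\lambda$ considered, deferring the explicit estimate to section \ref{numerics-section} as the Poisson case does; the decay of $K_0$ and its derivative makes this plausible but it is parameter-dependent, so I would state it under the standing assumptions. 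With $\beta_M$ produced by the Fredholm NN, the output layer implements \eqref{potential-form-1}'s Helmholtz analogue exactly, completing the construction.

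The main obstacle I expect is the bookkeeping around the volume correction term: unlike the Poisson case where $\int_{\partial\Omega}\partial_{n_y}\Phi\,d\sigma_y$ equals exactly $1$ or $\tfrac12$, here one must be careful that the extra $\lambda$-dependent volume integrals at $x$ and at $x^\star$ do \emph{not} fully cancel upon subtraction — a residual proportional to $\beta(x^\star)$ survives and must be placed in $b_O$ rather than in the grid weights. Getting the precise form $\big(\tfrac12 + \int_\Omega\lambda\Delta\Phi(x,y)\,dy\big)\beta(x^\star)$ right, and reconciling the $\Delta\Phi$ versus $\Phi$ notation used in the statement, is the delicate step; everything else is a direct transcription of the Poisson argument with $K_0$ in place of the logarithmic kernel.
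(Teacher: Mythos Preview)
Your proposal is correct and follows essentially the same route as the paper. The paper's proof does exactly what you outline: it first establishes the corrected identities $\int_{\partial\Omega}\partial_{n_y}\Phi(x,y)\,d\sigma_y = 1 + \int_\Omega\lambda\Phi(x,y)\,dy$ for $x\in\Omega$ and the analogous formula with $\tfrac12$ for $x^\star\in\partial\Omega$ (carried out via an explicit $\epsilon$-ball excision and the asymptotic $K_0(z)\sim -\ln z$, which is just a concrete realization of your distributional argument $\Delta_y\Phi = \delta + \lambda\Phi$), then performs the same add--subtract manipulation as in the Poisson case to arrive at the representation \eqref{potential-limit-expression}, from which the weights and biases are read off; the non-expansiveness of the BIE kernel is, as you anticipate, only asserted and deferred. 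Your identification of the residual $\big(\tfrac12 + \int_\Omega\lambda\,\delta\Phi\big)\beta(x^\star)$ as the delicate bookkeeping step, and of the $\Delta\Phi$ versus $\delta\Phi$ notational inconsistency in the statement, is on target.
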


\begin{proof}
    
It is known that the fundamental solution of the linear operator $\mathcal{L} := \Delta - \lambda$ satisfies:
\begin{equation}
\mathcal{L} \Phi(x,y) = \delta(x - y).
\end{equation}
and is given by \eqref{helm-fund}. The corresponding BIE is:
\begin{equation}\label{bie-sl}
\beta(x) = 2\left( f(x) - \int_{\Omega} \Phi(x,y) \psi(y) dy \right) - 2 \int_{\partial \Omega} \beta(y) \frac{\partial \Phi(x,y)}{\partial n_y} d\sigma_y.
\end{equation}
It can be shown that the resulting kernel in the BIE (\ref{bie-sl}) is indeed non-expansive. Furthermore, it is known from the properties of the Bessel function that the weak singularity can be treated numerically (more details are given in section \ref{numerics-section}). Now, the change in the differential operator creates a difference in the discontinuity dynamics. Hence, we will need to modify the calculations as we approach the boundary. To this end, we consider the following steps:

\begin{itemize}
\item[{\it Step 1.}] Let $x \in \Omega$. Fix $\epsilon > 0$ and consider the ball centered at $x$ with radius $\epsilon$, such that $B(x,\epsilon) \subset \Omega$. In $\Omega - B(x,\epsilon)$, $\Phi(x,y)$ is smooth, and we have:
\begin{equation}
0 = \int_{\Omega - B(x,\epsilon)} \mathcal{L}\Phi(x,y) d\sigma_y = \int_{\Omega - B(x,\epsilon)} \Delta_y \Phi(x, y) dy - \lambda \int_{\Omega - B(x,\epsilon)} \Phi(x,y) d\sigma_y.
\end{equation}


By the Divergence Theorem, and using $n_y$ to be the outwaard unit normal to $\Omega - B(x,\epsilon)$, this becomes:
\begin{eqnarray}\label{div}
0 &=& \int_{\partial \Omega} \frac{\partial \Phi(x,y)}{\partial n_y} d\sigma_y + \int_{\partial B(x,\epsilon)} \frac{\partial \Phi(x,y)}{\partial n_y} d\sigma_y - \lambda \int_{\Omega - B(x,\epsilon)} \Phi(x,y)d\sigma_y
\end{eqnarray}


Let us now consider the second term, where the outer unit normal to $\Omega - B(x,\epsilon)$ is $n_y = \frac{x - y}{|x - y|}$.
Furthermore, note that, with $z = |x-y|$, $K_0(z) \sim - \ln(z)$ as $z \to 0$ (see e.g., \cite{kropinski2011fast}), and so we have:
\begin{flalign}
\frac{\partial \Phi(x,y)}{\partial n_y} = \nabla_y  \Phi(x, & y)  \cdot \frac{x - y}{|x - y|} \approx \frac{\sqrt{\lambda}}{2\pi} \big( \ln(\sqrt{\lambda} |x - y|) \big)' \big( (y_1 - x_1)^2 + (y_2 - x_2)^2 \big)  = -\frac{1}{2\pi |x - y|}.
\end{flalign}



Thus, we obtain
$\int_{\partial B(x,\epsilon)} \frac{\partial \Phi(x,y)}{\partial n_y} ds(y) \approx -1,$ and therefore from \eqref{div}, as $\epsilon \rightarrow 0$:
\begin{equation}\label{res1}
\int_{\partial \Omega} \frac{\partial \Phi(x,y)}{\partial n_y} d\sigma_y = 1 + \int_{\Omega} \lambda \Phi(x,y) dy.
\end{equation}

\item[{\it Step 2.}] Now, consider $x^* \in \partial \Omega$.
We know that $\frac{\partial \Phi(x,y)}{\partial n_y}$ is not defined for $x^* = y$. Fix $x^* \in \partial \Omega$ and consider $B(x^*,\epsilon)$ and $\Omega_\epsilon \equiv \Omega - \left( \Omega \cap B(x^*,\epsilon) \right)$. Furthermore, let $\Xi_\epsilon = \left\{ y \in \partial B(x^*,\epsilon) : n_y \cdot (x^* - y) < 0 \right\}$ and $\tilde{\Xi}_\epsilon = \partial \Omega_\epsilon \cap \Xi_\epsilon$. Then, since $x^* \notin \Omega_{\epsilon}$ and $\mathcal{L}\Phi(x^*,y)=0$ for $y\in \Omega_{\epsilon}$:
\begin{eqnarray}
0 &=& \int_{\partial \Omega_\epsilon} \mathcal{L}\Phi(x^*,y) dy = \int_{\Omega_\epsilon} \Delta_y \Phi(x^*,y) dy - \lambda \int_{ \Omega_\epsilon} \Phi(x^*,y) dy \notag \\
&=& \int_{\partial \Omega} \frac{\partial \Phi(x^*,y)}{\partial n_y} d\sigma_y + \int_{\tilde{\Xi}_\epsilon} \frac{\partial \Phi(x^*,y)}{\partial n_y} d\sigma_y -  \lambda \int_{ \Omega_\epsilon} \Phi(x^*,y) dy.
\end{eqnarray}

In the above, $n_y$ is the outer unit normal to $\Omega_\epsilon$. For $y \in \tilde{\Xi}_\epsilon$, $n_y = \frac{x^* - y}{|x^* - y|}$. Similar steps as before give:
\begin{equation}
\int_{\tilde{\Xi}_\epsilon} \frac{\partial \Phi(x^*,y)}{\partial n_y} d\sigma_y \approx \int_{\tilde{\Xi}_\epsilon} \frac{-1}{2\pi |x^* - y|} d\sigma_y \approx -\frac{1}{2\pi \epsilon} \left( \frac{1}{2} 2\pi \epsilon \right) = -\frac{1}{2}.
\end{equation}

Therefore, as as $\epsilon \rightarrow 0$:
\begin{equation}\label{res2}
\int_{\partial \Omega} \frac{\partial \Phi(x^*,y)}{\partial n_y} d\sigma_y =\frac{1}{2} + \int_{\Omega} \lambda \Phi(x^*,y) dy.
\end{equation}
\end{itemize}

With the above results at hand, we can add and subtract first $\frac{\partial\Phi(x^*,y)}{\partial n_y}$ and then $\beta(x^*)$, into the double layer potential \eqref{potential-integral}, and use \eqref{res1}-\eqref{res2} in order to obtain the following representation:
\begin{flalign}\label{potential-limit-expression}
u(x) = \int_{\partial \Omega} \big(\beta(y) &- \beta(x^*)\big) 
\Big( \frac{\partial \Phi(x,y)}{\partial n_y} - \frac{\partial \Phi(x^*, y)}{\partial n_y} \Big) d\sigma_y
 \nonumber \\
& + \beta(x^*)\Big(\frac{1}{2} + \int_{\Omega} \lambda \delta \Phi(x, y) dy \Big) + \int_{\Omega} \Phi(x, y) \psi(y) dy + \int_{\partial \Omega} \beta(y) \frac{\partial \Phi(x^*, y(s))}{\partial n_y} d\sigma_y,
\end{flalign}
with $\delta\Phi(x,y) := \Phi(x,y) - \Phi(x^*,y)$, completing the proof.
\end{proof}

As shown, in the case of the Helmholtz PDE, the difference in the fundamental solution creates an additional term in this new double layer potential representation. This, in turn, corresponds to a different bias term in the output of the constructed neural network. Indeed, as we will show in the error analysis section, the properties of the fundamental solution, and by extension, its numerical integration, play an important role in the approximation efficiency of the Fredholm NN. 

\subsubsection{Semi-linear elliptic PDEs}\label{non-linear-section}
Finally, we also consider the case of non-linear (semi-linear) elliptic PDEs of the form: 
\begin{eqnarray}\label{nl-pde}
\begin{cases}
\Delta u(x) = \psi(x, u(x)), \quad x \in \Omega \\
u(x) = f(x), \quad x \in \partial \Omega.
\end{cases} 
\end{eqnarray}
The dependency of the source term on the solution $u$ means that we can obtain an implicit  integral formulation of the semilinear PDE  (see representation \eqref{bie1} below) involving the double layer potential formulation, as detailed below. Hence, this cannot be implemented in the standard Potential Fredholm Neural Network framework, and we will employ an iterative scheme for the construction of the PFNN for the forward problem. We will explain how the implicit form, given in \eqref{bie1}, will be used for the inverse problem in as described below.

 Using the representations developed in the previous sections, we obtain a coupled, implicit integral formulation of the double layer potential and BIE. As we describe below, for the iterative approach it is beneficial to write the PDE using the Helmholtz differential operator. In order to use the same fundamental solution and structure, we adopt this approach for the implicit formulation, as well, even though it can easily be done also using the Poisson operator. To this end, we can consider the PDE re-written as:
    \begin{equation} 
    \begin{cases}
    \Delta u(x) - \lambda u(x) = -\lambda u(x) + \psi(x, u(x)), & x \in \Omega, \label{iteration}\\
    u(x) = f(x), & x \in \partial \Omega.
    \end{cases}
    \end{equation}
We can write $\tilde{\psi}(x,u(x)) := -\lambda u(x) + \psi(x, u(x))$, and obtain the corresponding double layer potential reformulation as:
\begin{flalign}\label{bie1}
u(x)=\int_{\partial\Omega}(\beta(y)-&\beta(x^*))\left(\frac{\partial\Phi(x,y)}{\partial n_y}- \frac{\partial\Phi(x^*,y)}{\partial n_y}\right)d\sigma_y  +\beta(x^*)\left(\frac12+\int_{\Omega}\lambda\delta\Phi(x,y)dy\right) \notag \\ & +\int_{\Omega}\Phi(x,y)\tilde{\psi}(y, u(y))dy+\int_{\partial\Omega}\beta(y)\frac{\partial\Phi(x^*,y(s))}{\partial n_y}d\sigma_y, \,\,\, x \in \Omega,
\end{flalign}
\begin{flalign}\label{bie2}
g(x) =
\int_{\partial\Omega}
\beta(y)
\frac{\partial \Phi(x,y)}{\partial n_y}
d\sigma_y+\int_{\Omega} \Phi(x,y) \tilde{\psi}(y,u(y))dy + \frac{1}{2}\,\beta(x)  \,\,\, x \in \partial\Omega,
\end{flalign}
where $x \in \Omega$ and $x^* \in \partial \Omega$ and recall that $\Phi$ is the fundamental solution of the Helmholtz PDE. 

Notice that the dependency on the solution $u(x)$ within the integral formulation does not create an issue when constructing the PFNN, for a given $\psi$. Indeed, this is exactly the fact that we take advantage of to solve the inverse problem, where we consider that we have data from the solution, which we can use to construct a forward PFNN whose input matches the given data.

\color{black}

\subsection{PFNNs for the inverse source problem }\label{inv-sec}
\par As described, the inverse source problem can now be studied within the framework of the PFNN. Conceptually, the inverse problem consists of, given the data ${u}$, $f$, modelling the unknown source function, such that, when using the approximated source function is passed to the forward PFNN, the resulting solution to the PDE, $\hat{u}$, is as close as possible to the given data ${u}$. Since the PFNN simulates the fixed-point scheme and the double layer representation, the learned source term is ''forced'' to be selected to satisfy the PDE, simply by passing through the PFNN during training. More specifically: select a set of parameters $\theta$ such that when constructing the estimated kernel ${\psi}_{\theta}(\cdot, \cdot)$ and then feeding this into the PFNN with $M$ hidden layers, denoted by $PFNN_M( \cdot ; {\psi}_{\theta})$, the output, $\hat{u}(x;{\psi}_{\theta})$ (which will also be denoted $\hat{u}(x)$ for brevity), is as ''close'' as possible (in terms of an appropriately chosen loss function) to the given data ${u}$. The learning problem then reduces to the optimization problem of tuning the parameters $\theta$ appropriately until we reach the optimal set $\theta^*$ and the corresponding source model ${\psi}_{\theta^*}$. 
 \par The loss function to be minimized is therefore given by:
\begin{equation}\label{loss-inverse}
    \mathcal{L}(\theta) = \| {u}(x) - \hat{u}(x_i; \psi_{\theta}) \|^2_{{L}^2(\Omega)} + \mathcal{R}(\theta),
\end{equation}
where $\mathcal{R}(\theta) = \lambda_{reg}\|\psi_{\theta} \|^2_{L^2(\Omega)}$, with scalar $\lambda_{reg} > 0$, is a Tikhonov regularization term, used due to the ill-posedness of the problem. For the numerical implementation of the method, we will use the discrete version of \eqref{loss-inverse}, given by: 
\begin{equation}\label{inverse-loss}
     \hat{\mathcal{L}}(\theta) = \frac{1}{N} \sum_{i=1}^{N} \Big({u}(x_i) - \hat{u}(x_i; {\psi}_\theta) \Big)^2 + \hat{\mathcal{R}}(\theta),
\end{equation}
where $\hat{\mathcal{R}}(\theta)$ is calculated using the discrete $L^2$ norm of the learned source term at the specific grid points we observe the data.
The approach is detailed in Algorithm \ref{alg:inverse} and shown schematically in Fig. \ref{fig:inverse}.

\begin{algorithm}[hbt!]\caption{PFNN for the inverse source problem}\label{alg:inverse}
\begin{algorithmic}
\Ensure {Coarse grid $\{x_i\}_{i=1, \dots N}$ and data ${u}(x_i)$, for $x_i \in \Omega$.}
\State {Create grid $\{x^b_i\}_{i=1, \dots N'}$ and set the PFNN with nodes corresponding to $\{x^b_i\}$  and depth $M$.}
\State {Set number of training epochs $N$, initialize iterations, $n=0$.}
\State {Approximate the unknown source $\psi(x)$ with model ${\psi}_{\theta^{(n)}}(x)$, where $\theta^{(n)}$ denotes the set of unknown  parameters of the NN (weights and biases) at iteration $n$ of training.}
\While{$n \leq N$}
    \State {Step 1. Construct a PFNN, $PFNN_M(x;{\psi}_{\theta^{(n)}})$.}
    \State {Step 2. Use the PFNN output, $\hat{u}(x_i; {\psi}_{\theta^{(n)}}) \equiv \hat{u}(x_i;\theta^{(n)})$ to calculate the model loss given by \eqref{inverse-loss}. }
    \State {Step 3. Train model using standard gradient descent methods to minimize $\mathcal{L}(\theta)$.}
    \State {Step 4. Set $n \leftarrow n+1$.}
\EndWhile \\
\Return {${\psi}_{\theta^{*}}(x)$ and $\hat{u}(x_i;\theta^{*})$} 
\end{algorithmic}
\end{algorithm}
\begin{figure}[!ht]
\begin{center}
  \includegraphics[width = 0.9\textwidth]{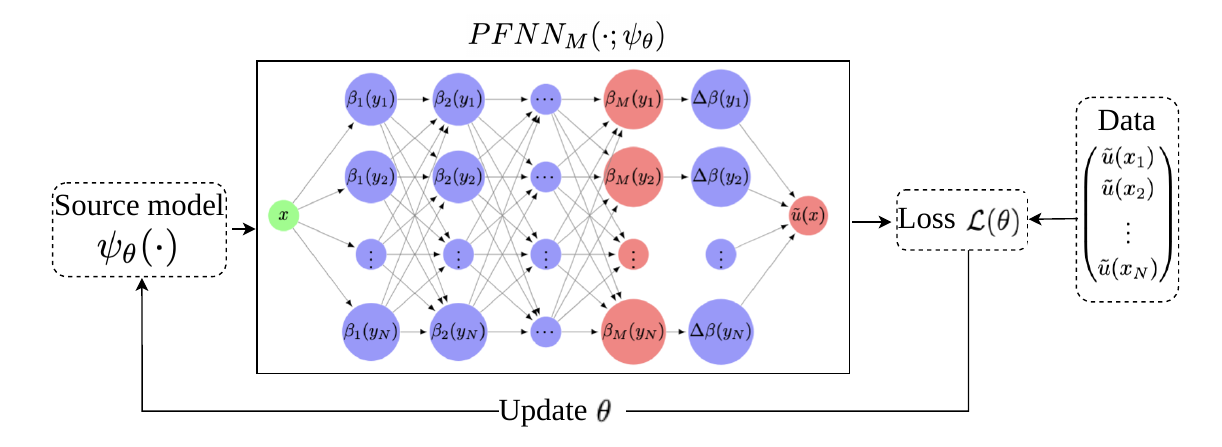} 
\end{center}
\caption{Schematic representation of Algorithm \ref{alg:inverse} to solve the inverse problem using the Potential FNN framework. The dashed line is applicable for the semi-linear PDE, in which case the data from the solution $u(x)$ are used as input to the unknown source model $\psi_{\theta}(x,u).$}\label{fig:inverse}
\end{figure}


\par We would like to emphasize that the structure of the PFNN itself ensures that we are simultaneously accounting for the data loss, as well as forcing the model to adhere to the physics of the problem. This is an important novelty of the proposed method: rather than considering an arbitrary neural network structure and calculating the residuals of the PDE, a neural network with a pre-determined, numerical-analysis based architecture is used to ensure that the functions passed through it adhere to the given physics of the problem (in our case the PDE). Furthermore, with the proposed approach, notice that we learn the source term and simultaneously the full solution $u(x), x \in \Omega$, by having constructed the the Potential Fredholm NN, $PFNN_M(\cdot; \psi_{\theta^*})$, after training.

\subsection{Universal approximation of the unknown source terms and error analysis}\label{error-section}
In this section, we show that the proposed scheme produces source term models which are universal approximators of the unknown source terms for the corresponding elliptic PDEs and we deduce explicit approximation error bounds. We build the proofs by first constructing error bounds for the corresponding forward problems. Then, from these results, we will show that we can approximate the source terms such that the approximated and true solutions to the PDEs are arbitrarily close. Throughout this section we use the sup-norm, denoted $\| f(x) \|_\mathcal{A} := sup_{x\in \mathcal{A}} |f(x)|$. For ease of notation, when there is no doubt concerning the domain under consideration, we will simply denote this norm by $\| \cdot \|$, only specifying the domain if there are more than one domains involved, i.e., ${\cal A}$ can either be $\Omega$ or $\partial \Omega$. 

\par We recall two important points: firstly, we note that the Fredholm NN replicates the process of successive approximations, which inherently introduces an approximation error $||\beta_M - \beta||_{\partial \Omega}$. Secondly, the solution constructed by the double layer potential satisfies identically the boundary condition when the BIE is satisfied. Therefore, based on these observations, it is expected that a stable scheme ensures that approximation errors at the boundary decrease progressively as the number of iterations increases. 
\par Here, we  provide bounds for these errors. Indeed, the Fredholm NN and subsequently the PFNN  construction provide a framework where this is possible. This is an important step, since DNN-based models often lack consistent error estimates, and often exhibit relatively large errors on the boundaries. 
\par For the completeness of the presentation, we extend the following abstract proposition (see \cite{georgiou2024fredholm}). 
\begin{proposition} \label{error-proposition}
Consider the integral equation $f(x) = g(x) + (\mathcal{T}f)(x)$, with the integral operator defined by:
\begin{eqnarray}
    (\mathcal{T}f)(x) =  \int_{\mathcal{A}}K(x,y) f(y)dy,
\end{eqnarray}
with $\mathcal{A}\subset \mathbb{R}^p$, and let $\mathcal{T}_N$ be the corresponding discretization on a grid of size $N$ and denote the fixed-point solution of the integral equation, $f^*$. Then:
\begin{itemize}
    \item[$(i)$] if $\mathcal{T}$ is a contractive operator, the error in the M-layer FNN approximation, $f_M$, satisfies the following bound:
 \begin{eqnarray}\label{fnn-error}
    \left\|f^*-f_M\right\|_{\mathcal{A}} \leq \frac{\mathrm{e}^{1-q}}{1-q}\big( \left\|\mathcal{T} g-g\right\|_{\mathcal{A}} + \varepsilon_N(g) \big)\mathrm{e}^{-(1-q) v_M},   
\end{eqnarray}

where $\varepsilon_N(g)$ is the error incurred by the discretization of the integral (i.e., $\varepsilon_N(g):= \|\mathcal{T}g - \mathcal{T}_Ng\|_{\mathcal{A}} $), $v_0=0, v_M=M\kappa$ (since in the Fredholm NN we consider constant values for the parameter in the Krasnosel'skii-Mann iteration, $\kappa_{\nu} = \kappa$). 
    \item[$(ii)$] if $\mathcal{T}$ is non-expansive, and assuming the operator $(I - \mathcal{T})$ is metrically subregular, i.e.:
    \begin{equation}\label{metric-sub}
        \|f^* - f \|_{\mathcal{A}} \leq \mu \| f - \mathcal{T}f \|_{\mathcal{A}},
    \end{equation}
for a constant $\mu >0$. Then,
\begin{equation}\label{error-bound-new}
    \|f_M - f^* \|_{\mathcal{A}} \leq \left(\sqrt{1 - \frac{\kappa(1-\kappa)}{\mu^2}}\right)^M \|g-f^*\|_{\mathcal{A}}.
\end{equation}

\end{itemize}
\end{proposition}

\begin{proof}
    Proof of part $(i)$ can be found in \cite{georgiou2024fredholm} for the 1D case. The extension to higher dimensions follows by simply generalizing the integral discretization error.
    
    For part $(ii)$, we start by recalling that, for non-expansive operators, the KM algorithm guarantees Fejer monotonicity, i.e.,:
    \begin{equation}\label{fejer}
        \|f_{M} - f^*\|^2 = \|f_{M-1} - f^*\|^2 - \kappa(1-\kappa)\|f_M - f_{M-1}\|^2.
    \end{equation}
Note that we have dropped the subscript of the norm, for brevity. Then, notice that \eqref{metric-sub} can written as $\| f^* - f_{M-1}\| \leq \mu \|f_M - f_{M-1}\|$. We now substitute into \eqref{fejer} to obtain:
\begin{equation}
     \|f_M - f^*\| \leq \sqrt{1 - \frac{\kappa(1-\kappa)}{\mu^2}} \|f_{M-1}-f^*\|.
\end{equation}
Simply iterating over $M$, we obtain \eqref{error-bound-new}, as required.
\end{proof}

\par Given that the Fredholm NN replicates a discretization of the BIE, as well as the corresponding integrals in the proposed formula for the potential, it will be useful to use a notation for the discretization that encapsulates various schemes that may be considered. Throughout the proof, we will use the notation $\tilde{\int_{\mathcal{A}}}$ to represent the discretized version of $\int_{\mathcal{A}}$, for any arbitrary domain $\mathcal{A} \subseteq \mathbb{R}^p$.

We emphasize that this notation is useful so that the results that follow can be applied to any chosen discretization scheme. \par 
We now demonstrate the result for the error bounds in the domain and on the boundary of the PDEs, when solving the forward problem using the PFNN. (Note that we present the proof for the Helmholtz PDE. The proof for the Poisson case is similar and requiring only straightforward modifications and is hence omitted. The non-linear case is presented separately below.) 

\begin{proposition}[Error bounds for PFNN]\label{error-bounds-thrm}
 Consider the PDE of the form (\ref{helmholtz-pde}), with a potential  given in form (\ref{potential-limit-expression}) and corresponding BIE, (\ref{BIE}), whose Fredholm NN approximation is given by $\beta_M(x): \partial \Omega \rightarrow \mathbb{R}$. Furthermore, define the following terms, depending on the discretization scheme selected for the integrals in (\ref{potential-limit-expression}):
 \begin{flalign}
     &D_1(\mathcal{A}) := \left\|\ \int_{\Omega}  \delta \Phi(x, y) dy -  \tilde{\int_{\Omega}} \delta \Phi(x, y) dy \right\|_{\mathcal{A}}, \\
     &D_2(\mathcal{A}) := \left\| \int_{\Omega} \Phi(x, y) \psi(y)  dy - \tilde{\int_{\Omega}}\Phi(x, y) \psi(y) dy \right\|_{\mathcal{A}}, \\
    &D_3(\mathcal{A}) := \left\|\int_{\partial \Omega} \Big(\beta_M(y) - \beta_M(x^*)\Big) \mathcal{D} \Phi(x,y) d\sigma_y - \tilde{\int_{\partial \Omega}} \big(\beta_M(y) - \beta_M(x^*)\big) \mathcal{D} \Phi(x,y) d\sigma_y\right\|_{\mathcal{A}}, \\
     & D_4(\mathcal{A}) := \left\| \int_{\partial \Omega} \beta_{M}(y)  \frac{\partial \Phi(x, y)}{\partial n_y}  d\sigma_y - \tilde{\int_{\partial \Omega}} \beta_M(y) \, \frac{\partial \Phi(x, y)}{\partial n_y} d\sigma_y \right\|_{\mathcal{A}},
 \end{flalign}
 
 when $x \in \mathcal{A}$, and recall that $\mathcal{D} \Phi({x}, {y}):= \Big(\frac{\partial \Phi}{\partial n_{y}}({x}, {y})- \frac{\partial \Phi}{\partial n_{y}}({x}^{\star}, {y})\Big)$.  Then, the PFNN construction providing $\tilde{u}$, as approximation of the solution of the PDE, satisfies the following error bound:
\begin{flalign}\label{error-domain}
    \left\|u - \hat{u} \right\|_{\Omega} \leq \left\|\beta - \beta_M \right\|_{\partial \Omega}\left( \frac{1}{2} + 2\left\|\int_{\partial \Omega}  \mathcal{D} \Phi(x,y)d\sigma_y\right\|_{\Omega} + \left\| \int_{\Omega} \lambda \delta \Phi(x, y) dy \right\|_{\Omega} + \left\| \int_{\partial \Omega}\frac{\partial \Phi(x, y)}{\partial n_y} d\sigma_y \right\|_{\Omega} \right) \notag \\ + \left\|\beta_M\right\| \lambda D_1(\Omega) + D_2(\Omega) + D_3(\Omega) + D_4(\Omega).
\end{flalign}

Furthermore, the following inequality holds for errors on the boundary, $x\in \partial \Omega$: 
\begin{flalign}\label{error-boundary}
  \left\|f - \hat{f} \right\|_{\partial \Omega} \leq \left\|\beta - \beta_M \right\|_{\partial \Omega} \left( \frac{1}{2} + \left\| \int_{\partial \Omega}\frac{\partial \Phi(x^*, y)}{\partial n_y} d\sigma_y \right\|_{\partial \Omega} \right)  + D_2(\partial \Omega) + D_4(\partial \Omega),  
\end{flalign}
where $\hat{f}(x) = \hat{u}(x)$, for $x\in\partial\Omega$, is the approximated boundary condition and $\left\|\beta - \beta_M \right\|_{\partial \Omega}$ is the error in the  FNN approximation of the boundary function $\beta$, as given in (\ref{fnn-error}).
\end{proposition}
\begin{proof}
The proof entails decomposing the error into the components arising from the Fredholm NN estimator and the subsequent PFNN construction. Both of these are directly associated to well-known numerical methods with well-defined error bounds, which we can now take advantage of in the context of Fredholm NNs. 
\par Considering the potential formulation (\ref{potential-limit-expression}) and keeping the notation $\mathcal{D} \Phi({x}, {y}):=\frac{\partial \Phi}{\partial n_{y}}({x}, {y})- \frac{\partial \Phi}{\partial n_{y}}(x^*, {y})$, we can write: 
\begin{flalign}
\left\|u -  \hat{u}\right\|_{\Omega} &\leq \left\|\int_{\partial \Omega} \big(\beta(y) - \beta(x^*) \big) \mathcal{D} \Phi(x,y) d\sigma_y -  \tilde{\int_{\partial \Omega}} \big(\beta_M(y) - \beta_M(x^*)\big) \mathcal{D} \Phi(x,y) d\sigma_y \right\|_{\Omega} \notag \\ 
&+ \left\| \beta(x^*) \Big(\frac{1}{2} + \int_{\Omega} \lambda \Delta \Phi(x, y) dy \Big) - \beta_M(x^*) \Big(\frac{1}{2} +  \tilde{\int_{\Omega}} \Delta \Phi(x, y) dy \Big) \right\|_{\Omega} \nonumber \\
&+ \left\| \int_{\Omega} \Phi(x, y) \psi(y) dy - \tilde{\int_{\Omega}} \Phi(x, y) \psi(y) dy \right\|_{\Omega} \notag \\ 
&+ \left\|\int_{\partial \Omega} \beta(y) \frac{\partial \Phi(x, y)}{\partial n_y}d\sigma_y - \tilde{\int_{\Omega}}\beta_M(y) \frac{\partial \Phi(x, y)}{\partial n_y} d\sigma_y\right\|_{\Omega} \\
&=: I_1 + I_2 +I_3 +I_4.
\end{flalign}
We now consider each of the terms in the above expression separately. For clarity, in the steps below we gather the terms corresponding to the integral discretization errors and subsequently handle these using the well-known results for discretized integrals:
\begin{itemize}
    \item[{\it 1st term:}]
\begin{flalign}
   I_1:= &\left\| \int_{\partial \Omega} \big(\beta(y) - \beta(x^*)\big) \mathcal{D} \Phi(x,y) d\sigma_y - \tilde{\int_{\partial \Omega}}\big(\beta_{M}(y) - \beta_{M}(x^*)\big) \mathcal{D} \Phi(x,y) d\sigma_y \right\|_{\Omega} \notag \\
     \leq &\left\| \int_{\partial \Omega}\Big(\big(\beta(y) -  \beta_M(y)\big) + \big(\beta_M(x^*) -  \beta(x^*)\big)\Big) \mathcal{D} \Phi(x,y) d\sigma_y \right\|_{\Omega} \notag \\ 
      + & \left\|\int_{\partial \Omega} \Big(\beta_M(y) - \beta_M(x^*)\Big) \mathcal{D} \Phi(x,y) d\sigma_y - \tilde{\int_{\partial \Omega}} \big(\beta_M(y) - \beta_M(x^*)\big) \mathcal{D} \Phi(x,y) d\sigma_y\right\|_{\Omega} \notag \\
     \leq & \int_{\partial \Omega} 2\left\|\beta_M - \beta\right\|_{\partial \Omega} \mathcal{D} \Phi(x,y)d\sigma_y + D_3(\Omega) \leq 2\left\|\beta_M - \beta\right\|_{\partial \Omega} \left\|\int_{\partial \Omega}  \mathcal{D} \Phi(x,y)d\sigma_y\right\|_{\Omega} + D_3(\Omega),
\end{flalign}
 where for the last steps we recall that the normal derivative of the fundamental solution is non-negative (see calculations for the potential above).
    \item[{\it 2nd term:}] 
    \begin{flalign}
   I_2:= & \left\| \beta(x^*) \Big(\frac{1}{2} + \int_{\Omega} \lambda \delta \Phi(x, y) dy \Big)  - \beta_{M}(x^*) \Big(\frac{1}{2} +  \tilde{\int_{\partial \Omega}} \lambda \delta \Phi(x, y) dy \Big)  \right\|_{\Omega} \notag \\
    \leq& \frac{1}{2} \left\| \beta - \beta_{M} \right\|_{\partial \Omega} + \left\| \beta(x^*) \int_{\Omega} \lambda \delta \Phi(x, y)dy  - \beta_{M}(x^*)  \int_{\Omega} \lambda \delta \Phi(x, y)dy \right\|_{\Omega} \notag \\
     +& \left\|\beta_M(x^*) \int_{\Omega} \lambda \delta \Phi(x, y) dy   - \beta_{M}(x^*) \tilde{\int_{\Omega}} \delta \Phi(x, y) dy \right\|_{\Omega} \notag \\
    \leq & \left\|\beta - \beta_M\right\|_{\partial \Omega} \left( \frac{1}{2} + \left\| \int_{\Omega} \lambda \delta \Phi(x, y) dy \right\|_{\Omega} \right)  + \left\|\beta_M \right\|\lambda D_1(\Omega).
    \end{flalign}
    
    \item[{\it 3rd term:}]
    \begin{eqnarray}
  I_3:= \left\| \int_{\Omega} \Phi(x, y) \psi(y)  dy - \tilde{\int_{\Omega}}\Phi(x, y) \psi(y) dy \right\|_{\Omega} =: D_2(\Omega).
    \end{eqnarray}

    \item[{\it 4th term:}]
    \begin{flalign}
   I_4:= & \left\| \int_{\partial \Omega} \beta(y) \frac{\partial \Phi(x, y)}{\partial n_y} d\sigma_y - \tilde{\int_{\partial \Omega}} \beta_M(y)  \frac{\partial \Phi(x, y)}{\partial n_y} d\sigma_y \right\|_{\Omega} \notag \\
    \leq & \left\| \int_{\partial \Omega} \beta(y) \, \frac{\partial \Phi(x, y)}{\partial n_y} d\sigma_y - \int_{\partial \Omega} \beta_{M}(y)  \frac{\partial \Phi(x, y)}{\partial n_y} d\sigma_y \right\|_{\Omega} \notag \\
    + & \left\| \int_{\partial \Omega} \beta_{M}(y)  \frac{\partial \Phi(x, y)}{\partial n_y}  d\sigma_y - \tilde{\int_{\partial \Omega}} \beta_M(y) \frac{\partial \Phi(x, y)}{\partial n_y} d\sigma_y \right\|_{\Omega} \notag \\
    \leq & \int_{\partial \Omega} \left\| \beta - \beta_{M} \right\|_{\partial \Omega} \frac{\partial \Phi(x, y)}{\partial n_y}  d\sigma_y + D_4(\Omega) \leq \left\| \beta - \beta_{M} \right\|_{\partial \Omega} \left\| \int_{\partial \Omega}\frac{\partial \Phi(x, y)}{\partial n_y} d\sigma_y \right\|_{\Omega} + D_4(\Omega).
    \end{flalign}
       
\end{itemize}
Hence, we obtain:
\begin{flalign}\label{error-bound}
    \left\|u - \hat{u} \right\|_{\Omega} \leq \left\|\beta - \beta_M \right\|\left( \frac{1}{2} + 2\left\|\int_{\partial \Omega}  \mathcal{D} \Phi(x,y)d\sigma_y\right\|_{\Omega} + \left\| \int_{\Omega} \lambda \delta \Phi(x, y) dy \right\|_{\Omega} + \left\| \int_{\partial \Omega}\frac{\partial \Phi(x, y)}{\partial n_y} d\sigma_y \right\|_{\Omega} \right) \notag \\ + \left\|\beta_M\right\|_{\partial \Omega} D_1(\Omega) + D_2(\Omega) + D_3(\Omega) + D_4(\Omega). 
\end{flalign}
The error bound for the boundary condition follows directly from the above, by noticing the vanishing terms as $x\rightarrow x^*$.
\end{proof}

\begin{remark}\label{error-remark}
The above result in combination with \eqref{fnn-error} provide an explicit expression for the error bounds in the PFNN construction, which depend on the boundary conditions of the PDE (via the BIE), the fundamental solution of the differential operator and the discretization of the integrals in the FNN representation of the potential. Through this composition, we are able to define the approximation errors in detail and quantify how or when larger errors may arise. 

\par Therefore, within the proposed framework, we see that the error in the model approximation can be attributed to: a) the number of hidden layers in the Fredholm NN, which is equivalent to the number of iterations required to converge to the fixed point within a certain accuracy, and b) the number of nodes used to construct the FNN on which we approximate the solution of the PDE, and which are then used for the numerical integration. However, we also see that the error bound also depends explicitly on the boundary function $\left\|\beta_M \right\|$. This occurs from two components of expression \eqref{error-domain} due to the error in the integral approximation. In particular, this dependency occurs through the term $\|\beta_M\|D_1(\Omega)$, as well as $D_4(\mathcal{A})$, where $\mathcal{A} = \Omega$ or $\partial \Omega$, since:
\begin{equation}
    D_4(\mathcal{A}) \leq C \max_{\substack{x \in \mathcal{A}}}\frac{d}{dy}\Big( \beta_M(y)\frac{\partial \Phi(x,y)}{\partial n_y} \Big) = C_1 \|\beta_M\|_{\mathcal{A}} + C_2,
\end{equation}
by a simple application of the product rule, where $C>0$ is a constant depending on the size of the discretization grid and $C_1, C_2 > 0$ are constants depending on the derivatives of $\frac{\partial \Phi(x,y)}{\partial n_y}$ and $\beta_M$, respectively. 
This decomposition of the errors therefore provides insight into how the form of the PDE can actually affect the error of the numerical approximation. 
\end{remark}

With the calculations above, we can now prove the main result for the inverse problem. We continue with the Helhmoltz PDE. The Poisson and semi-linear results follow easily (additional details are given below). 

\begin{theorem}[Universal Approximation Theorem via PFNNs]\label{UAT-inverse}
    Consider the PDE of the form \eqref{helmholtz-pde} with given boundary function $f: \partial \Omega \rightarrow \mathbb{R}$ and unknown source term $\psi:\Omega \rightarrow \mathbb{R}$, and true solution $u:\Omega \rightarrow \mathbb{R}$. Then, there exists a surrogate model $\psi_{\theta}: \Omega \rightarrow \mathbb{R}$, for some parameters $\theta \in \Theta$, where $\Theta \subset \mathbb{R}^P, P \in \mathbb{N}$ is a finite dimensional parameter set, such that:
    \begin{equation}
        \| u - \hat{u}_{\theta} \|_{\Omega} < \epsilon,
    \end{equation}
    for any $\epsilon >0$, where $\hat{u}_{\theta}$ is the output of the $PFNN_M(\cdot; \psi_{\theta})$. Furthermore, the error bound for the corresponding boundary condition approximation, $\hat{f}_{\theta}$, satisfies:
    \begin{equation}
        \| f - \hat{f}_{\theta} \|_{\partial \Omega} < \| u - \hat{u}_{\theta} \|_{\partial \Omega} < \epsilon.
    \end{equation}
\end{theorem}
\begin{proof}
    As mentioned, the source term $\psi$ satisfying the PDE is not unique. Hence, it suffices to show that we can approximate any such $\psi$ in the family of source terms that satisfy the PDE for the given solution $u$. Then, by the Universal Approximation Theorem for functions, there exists a neural network model with parameters $\theta \in \Theta$, $\psi_{\theta}$, such that, for any $\epsilon_1 > 0$: 
    \begin{equation}\label{uat-psi}
        \| \psi_{\theta} - \psi \|_{\Omega} < \epsilon_1.
    \end{equation}

    We can now show that the output of $PFNN_M(\cdot; \psi)$ can be approximated by $PFNN_M(\cdot; \psi_{\theta})$. To this end, we construct the error bound in the PFNNs as follows. Define $
g(x) = 2\!\left( f(x) - \int_{\Omega} \Phi(x,y)\,\psi(y)\,dy \right)
$ and
$g_{\theta}(x) = 2\!\left( f(x) - \int_{\Omega} \Phi(x,y)\,\psi_{\theta}(y)\,dy \right).$ Then:
\begin{equation}
\|g - g_{\theta}\|_{\partial \Omega} = 2\left\| \int_{\Omega} \Phi(x,y)\bigl(\psi(y) - \psi_{\theta}(y)\bigr)\,dy \right\|_{\partial \Omega}
\leq 2\|\psi - \psi_{\theta}\|_{\partial \Omega} \left\|\int_{\Omega} \Phi(x,y)\,dy\right\|_{\partial \Omega}
= C_1(R,N)\,\varepsilon_1
\end{equation}
where $C_1(R,N)$ is a constant that depends on the radius of the bounded domain $\Omega$ and on the discretization grid size, $N$, used to approximate the integral.

We now use this to obtain the error bound for the boundary function. Then, for the corresponding boundary function $\beta(x), x \in \partial \Omega$, we obtain:
\begin{flalign}
    \| \beta - \beta_{\theta} \|_{\partial \Omega} \leq \left\|\beta - \mathcal{F}_M\left(\cdot; g, \frac{\partial \Phi}{\partial n_y}\right) \right\|_{\partial \Omega} &+ \left\|\beta_{\theta} - \mathcal{F}_M\left(\cdot; g_{\theta}, \frac{\partial \Phi}{\partial n_y}\right) \right\| +  \left\| \mathcal{F}_M\left(\cdot; g, \frac{\partial \Phi}{\partial n_y}\right) - \mathcal{F}_M\left(\cdot; g_{\theta}, \frac{\partial \Phi}{\partial n_y}\right) \right\|_{\partial \Omega} \notag \\&= E(M,N)+ C_2(N)\|g-g_{\theta}\|_{\partial \Omega} = E(M,N) + C_1(R,N)C_2(N)\epsilon_1,
\end{flalign}
where $\mathcal{F}_M\left(\cdot;g, \frac{\partial \Phi}{\partial n_y}\right)$ is the Fredholm Neural Network $M$ layers, with first layer activation $g(\cdot)$ and weights given by the kernel $\frac{\partial \Phi}{\partial n_y}$ (in accordance to Theorem \ref{FUA}), $E(M,N)$ is the error in the approximation of $\beta$ and $\beta_{\theta}$ by the corresponding Fredholm Neural Networks (depending on the discretization grid size $N$ and number of layers $M$ - see \cite{georgiou2024fredholm}) and $C_2(N)$ is a constant depending on the norm of the Kransosel'skii-Mann algorithm applied at each layer of the Fredholm Neural Network.


Finally, we must account for the final layer of the PFNN corresponding to the double layer potential. We can therefore write:
\begin{align}\label{error-1}
&\|PFNN_M(\cdot;\psi) - PFNN_M(\cdot;\psi_{\theta})\|_{\Omega} \notag\\
&= \left\| \tilde{\int}_{\partial\Omega} \beta(y)\,\frac{\partial\Phi(x,y)}{\partial n_y}\,d\sigma_y
   +  \tilde{\int_{\Omega}} \Phi(x,y)\,\psi(y)\,dy
   -  \tilde{\int_{\partial\Omega}} \beta_{\theta}(y)\,\frac{\partial\Phi(x,y)}{\partial n_y}\,d\sigma_y
   -  \tilde{\int_{\Omega}} \Phi(x,y)\,\psi_{\theta}(y)\,dy \right\|_{\Omega} \notag\\
&\leq \left\| \tilde{\int_{\partial\Omega}} \frac{\partial\Phi(x,y)}{\partial n_y}\,
   \bigl(\beta(y) - \beta_{\theta}(y)\bigr)\,dy \right\|
   + \left\|  \tilde{\int_{\Omega}} \Phi(x,y)\,\bigl(\psi(y) - \psi_{\theta}(y)\bigr)\,dy \right\|_{\Omega} \notag\\
&\leq \|\beta - \beta_{\theta}\|_{\partial \Omega} \underbrace{\left\|\tilde{\int_{\partial\Omega}} \frac{\partial\Phi(x,y)}{\partial n_y} dy\right\|_{\Omega}}_{:=C_3}
   + \|\psi - \psi_{\theta}\|_{\Omega}  \underbrace{\left\|\tilde{\int}_{\Omega} \Phi(x,y)dy \right\|_{\Omega }}_{:= C_4} \notag \\&= C_3E(M,N) + \big(C_1(R,N)C_2(N)C_3 + C_4\big)\epsilon_1,
\end{align}

 Hence, from \eqref{error-1}, we can deduce that $\psi_{\theta}$ can be selected so that $PFNN_M(\cdot;\psi_{\theta})$ approximates the Potential Fredholm NN, $PFNN_M(\cdot;g)$ arbitrarily close.

Finally, the desired bound is deduced as follows: let $\hat{u}_{\theta}$ be the output of ${PFNN}_M(\cdot;\psi_{\theta})$ and $\hat{u}$ of ${PFNN}_M(\cdot;\psi)$. We then have:
\begin{equation}
\|u - \hat{u}_{\theta}\|_{\Omega} \leq \|u - \hat{u}\|_{\Omega} + \|\hat{u} - \hat{u}_{\theta}\|_{\Omega}.
\end{equation}
The first term is bounded by Proposition \ref{error-bounds-thrm}, from which it follows that we can select the Fredholm Neural Network depth $M$ and the discretization grid size $N$ large enough so that $\| u - \hat{u} \| < \epsilon_2$ and the second term satisfies \eqref{error-1}. Hence, for any closeness level $\epsilon$, we can select $\epsilon_1$, $\epsilon_2$ and the corresponding depth and discretization size, $M$ and $N$, respectively, such that $C_3E(M,N) + \big(C_1(R,N)C_2(N)C_3 + C_4\big)\epsilon_1 + \epsilon_2 < \epsilon$, to obtain:
\begin{equation}
    \| u - \hat{u}_{\theta} \|_{\Omega} < \epsilon,
\end{equation}
as required.

Finally, we can deduce that the error on the boundary is strictly less that $\epsilon$, since by Proposition \ref{error-bounds-thrm}, $\| f - \hat{f}_{\theta} \|_{\partial \Omega} < \| u - \hat{u}_{\theta}\|_{\Omega}$.
\end{proof}

An analogous result holds for the Poisson PDE, as the above proof can easily be replicated using the PFNN construction as given in section \ref{section-PFNN-Poisson}. For the semi-linear case the proof of Theorem \ref{UAT-inverse} holds, but the model for the source term becomes $\psi_{\theta}(x,u)$, i.e., we add another input dimension, corresponding to the solution input (for which we assume we have training data in this setting).

At this point it is worth highlighting that Theorem \ref{UAT-inverse} shows that we are able to obtain smaller errors on the boundary, by the construction of the PFNN and the training algorithm. This is an important theoretical guarantee, since black-box neural networks often show increased errors on the boundaries (when they are trained using soft loss functions for the boundary data), compared to the interior of the domain. With the PFNN, the network is designed to ensure the boundary conditions are satisfied. As we will see in the numerical examples, indeed the proposed architecture produces solutions which satisfy the boundary conditions within machine precision.

\par Finally, we note that the  error analysis can be useful when considering the computational requirements for the construction of DNNs under the proposed framework. In particular, for a given computational cost, there exists a trade-off between the number of layers in the Fredholm NN for the approximation of the boundary function $\beta_M$, and the discretization of the integral terms; increasing the number of hidden layers in the Fredholm NN will ensure a smaller error $\left\| \beta_M - \beta \right\|_{\partial \Omega}$, thus decreasing the first four components in (\ref{error-domain}), but selecting a more computationally demanding numerical integration will decrease the final four terms. The selection of these hyperparameters can be done in a robust way since we know the components and how the underlying form of the PDE can affect the errors.

\section{Numerical Implementation}\label{numerics-section}
In this section, we detail the numerical implementation of Potential Fredhol NNs. To this end, it is important to note that the fundamental solutions both for the Poisson as well as the Helmholtz operators exhibit weak singularities which, although integrable, can cause issues during the numerical estimation of the integrals which depend on them. We describe how these are handled to ensure high accuracy is maintained throughout the domain and boundary. 
\par Our main focus is on the solution of the inverse problems for the sources. However, for completeness, in Appendix \ref{appendixx}, we also show the numerical implementation for forward problems using PFNNs, considering various number of layers and the errors in the domain and boundary. With these examples for both the inverse and forward problems, we aim to show in detail how the theoretical framework presented above can be used to construct the PFNNs and can create accurate and explainable neural network models, whose properties and performance can be understood through the lens of classical numerical analysis for the solution of both forward and inverse problems of elliptic PDEs.
\par We now proceed with the implementation of the methodology. The calculations in the examples that follow were performed in \emph{Matlab}. Although the theoretical framework holds for arbitrary dimensions and boundary geometries, for this first introduction to Potential Fredholm Neural Networks, we will consider examples of PDEs in 2D with a simple circular disc domain, as well as a PDE in 3D with a spheroid boundary. Higher dimensional cases with alternative boundary geometries will be considered in future work, where we will focus solely on details regarding numerical implementation.  

\subsection{2D Poisson inverse source problem}\label{inv-ex-poi-sec}
We consider the Poisson PDE on the 2D disc, $\Omega:= \{(x_1, x_2):x_1^2 + x_2^2 \leq 1\}$: 
\begin{eqnarray}\label{inv-ex}
    \Delta u(x) = \psi(x), \quad x=(x_1,x_2) \in\Omega,
\end{eqnarray}
with boundary condition $f(x_1, x_2) = 2x_2, x \equiv (x_1, x_2) \in \partial \Omega$, and unknown source term $\psi: {\Omega} \rightarrow \mathbb{R}$. Working in polar coordinates,
we suppose we have data of the solution over a uniform $(r,\phi)$-grid of size $20\times 20$ and our goal is to estimate the unknown source term $\psi(x)$.
For this example, rather than using the exact true solution, we generate training data $\tilde{u}(r,\phi)$ using the forward PFNN with $M = 100$ layers and $100$ nodes per layer. (The exact calculations for the forward problem are given in Appendix \ref{appendix-poisson}.)
\par To solve the inverse problem, we consider a shallow neural network as the model for the unknown source term $\psi_{\theta}(x)$, with a single hidden layer, 20 nodes and a $\tanh$ activation function. Then, training is performed using Algorithm \ref{alg:inverse} where for the underlying PFNN we consider $M=100$ layers to ensure convergence of the fixed-point scheme solving the BIE, with a $\phi-$grid of size 100 corresponding to the interior nodes (we note that, of course, other hyperparameters of the Fredholm NN used for training can be chosen; we select these similar to our data generation for simplicity in this illustrative example). We perform $50$ runs of this model training process using a Levenberg-Marquardt (LM) optimizer with 600 iterations, and a Tikhonov regularization constant $\lambda_{reg} = 1.0$E$-12$ (note that a small regularization is required here to ensure appropriate trade-off between the data loss and the regularizing term).
\par Across the 50 models runs, the average  training MSE is $8.48$E$-07$. The 10-90th percentile interval is $[8.65$E$-08, 2.93$E$-06]$ and median $2.39$E$-07$. Furthermore, we test the model on an unseen grid $\Omega_{test}:= \{(r_i,\phi_j)\}_{i,j=1,\dots,50}$, over which the PDE is solved using the learned source function $\psi_{\theta}(x)$ to pass through the PFNN, to $\hat{u}(\cdot ; \psi_{\theta^*} )$ and compare the result to the true solution of the PDE, which is $u(x_1,x_2) = x_2(x_1^2+x_2^2)(1+x_1^2+x_2^2)$. We show a breakdown of the mean absolute error (MAE) and $L^{\infty}$ errors within the interior of the domain and across the boundary in Table \ref{tab:error_metrics_inverse} (for brevity, these results are shown for the test grid; the results for the training grid are of the same order of magnitude). We note that of particular importance are the errors on the boundary, where we know that $u(x_1, x_2) = f(x_1, x_2) = 2x_2$. Specifically, we see that the proposed modelling framework achieves machine precision on the boundary, with absolute errors of the order of E$-15$. This shows that via the PFNN architecture we are able to construct the model such that the boundary conditions are satisfied identically, as required. This is achieved without imposing any explicit additional terms for the boundary conditions in the loss function, but rather simply from the construction of the PFNN which is built to satisfy the boundary conditions with extremely high accuracy (recall Theorem \eqref{UAT-inverse}). 
\par  Finally, in Fig. \ref{fig:example_inv} we display the contour of learned source term function corresponding to the model achieving the best training MSE ($6.70$E$-08$), along with the absolute error over the training and test grids. These results show not only a high accuracy in the resulting solution of the PDE, but also powerful generalization capability.


\begin{figure}[ht]
    \centering
    \begin{subfigure}[t]{0.33\textwidth}
        \centering
        \includegraphics[width=\textwidth]{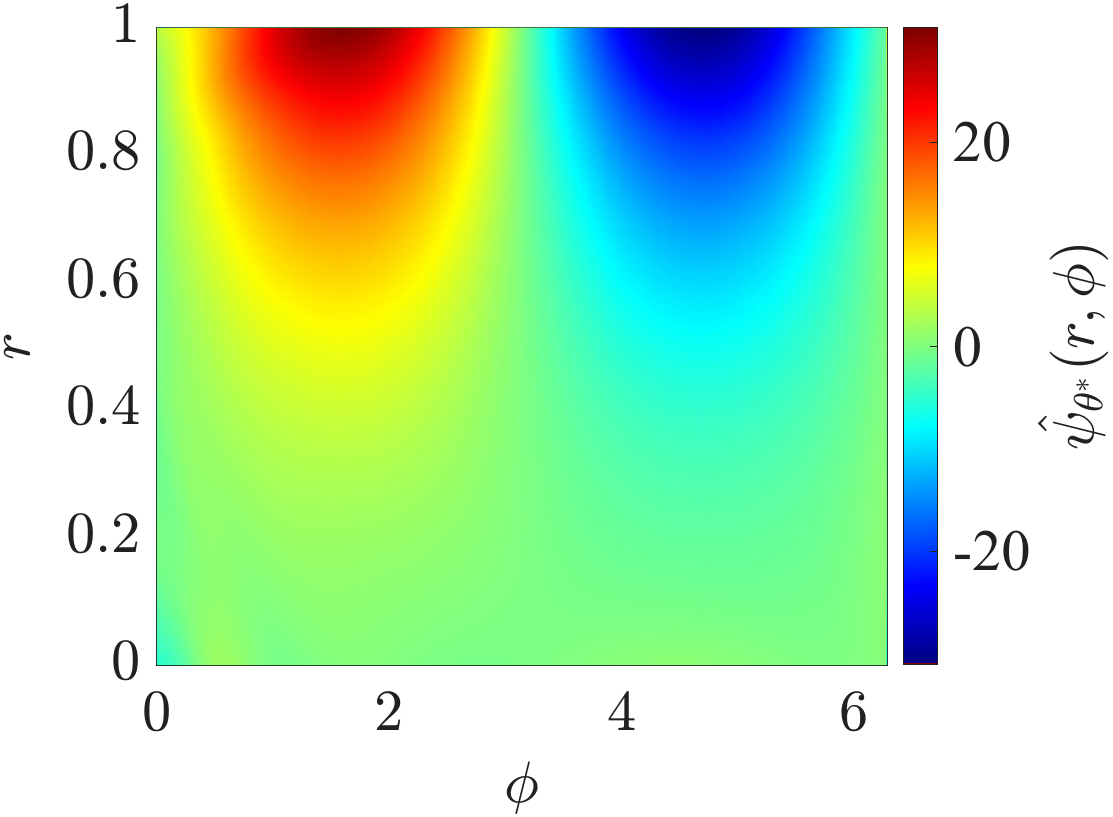}
        \caption{The learned source term $\psi_{{\theta^*}}$, where $\theta^*$ is the optimal set of model parameters.}
    \end{subfigure}
    \begin{subfigure}[t]{0.33\textwidth}
        \centering
        \includegraphics[width=\textwidth]{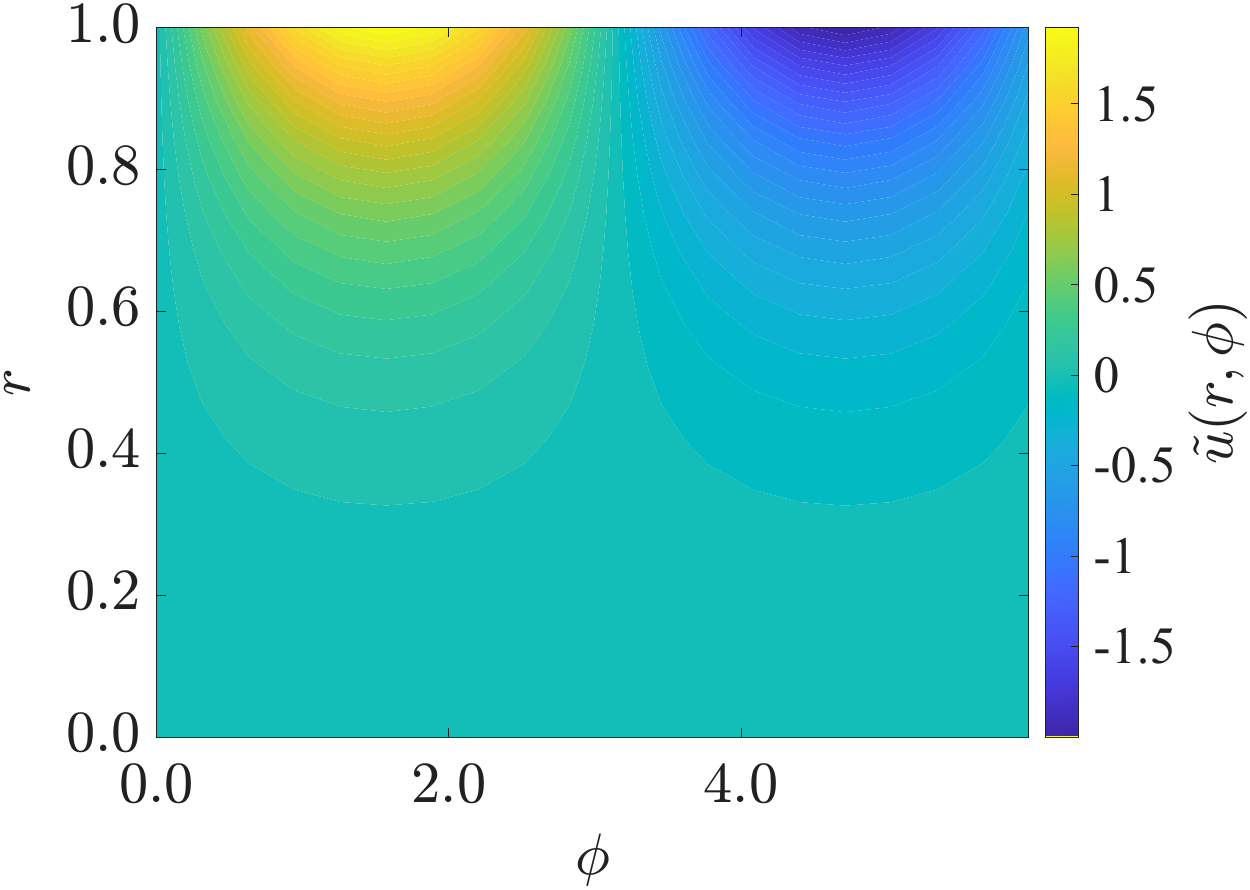}
        \caption{The absolute error between the solution of the PDE using the learned source term and the training data, $|\hat{u} - \tilde{u}|$.}
    \end{subfigure}
    \begin{subfigure}[t]{0.33\textwidth}
        \centering
        \includegraphics[width=\textwidth]{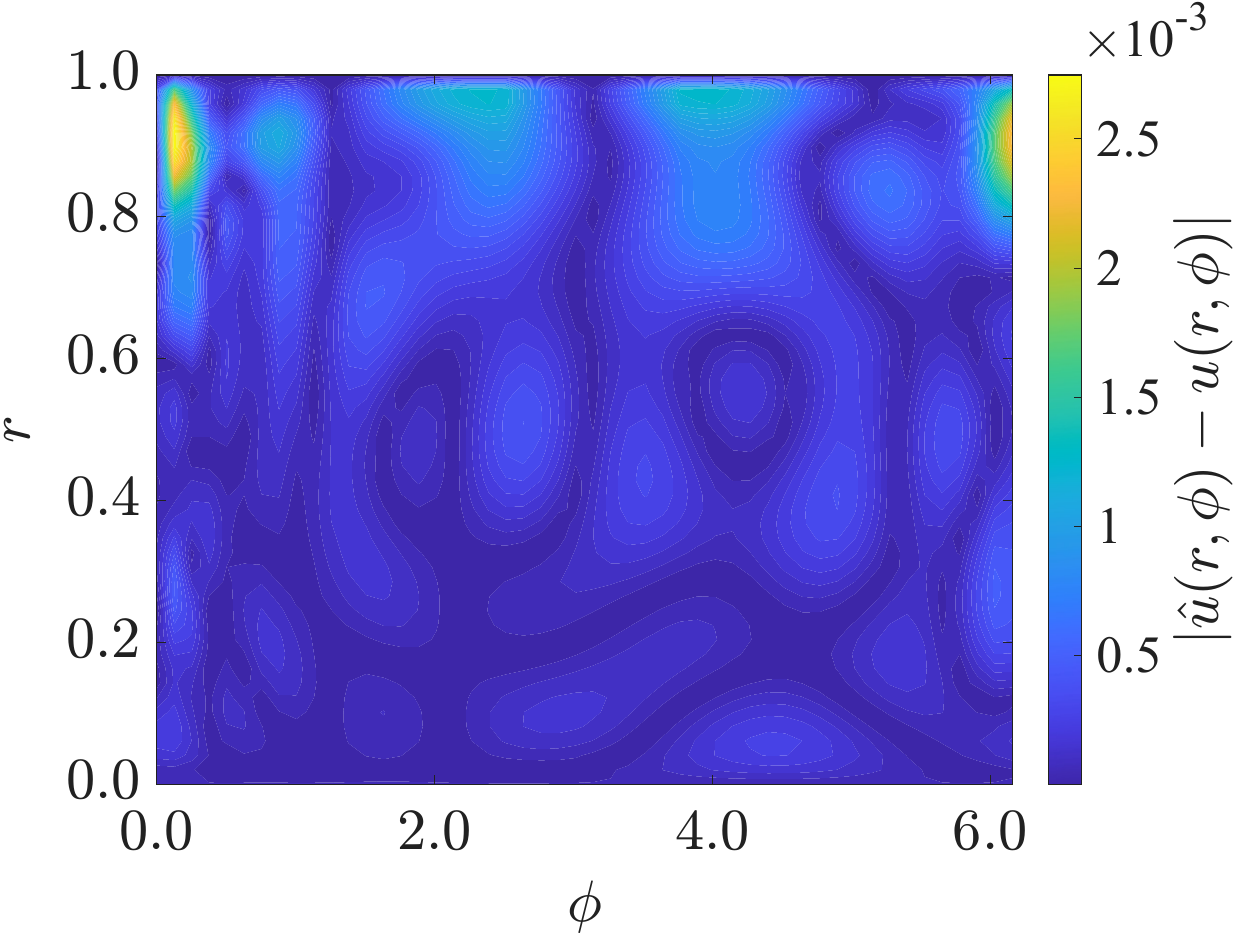}
        \caption{The absolute error between the solution of the PDE using the learned source term and the true solution on a test grid, $|\hat{u} - u|$.}
    \end{subfigure}
    \caption{Results of the Poisson inverse source problem \ref{inv-ex-poi-sec}.}
    \label{fig:example_inv}
\end{figure}

\subsection{2D Helmholtz}\label{inv-ex-helm-sec}
We now consider the PDE, again in the 2D disc, of the form: 
\begin{equation}\label{inv-ex-helm}
    \Delta u(x) - \lambda u = \psi(x),
\end{equation}
with boundary condition $f(x) \equiv f(x_1, x_2) = x_1^3 + 2x_1^2 -2$ for $x \in  \partial \Omega$.
We are interested in approximating the source term with a model $\psi_{\theta}(x)$, given data from the true solution $u(x) = x_1^3 - 2x_2^2$. (Note that the details of solving the forward version of the problem with the known source term are given in Appendix \ref{appendix-helm}.)

As in the example above, we consider data from the true solution on a grid of size $20\times 20$. For the source model $\psi_{\theta}(x)$, we use a neural network with a single hidden layer, 20 neurons and a $\tanh$ activation function. For training, we use Algorithm \ref{alg:inverse} where the forward step is done using a PFNN with $M=100$ and 100 nodes per layer correpsonding to the $\phi-$ grid in the BIE. Note that during training, both integrals in the final layer of the PFNN corresponding to the terms $\int_{\Omega}\lambda \delta \Phi(x,y)dy$ and $\int_{\Omega} \Phi(x,y)\psi(y)dy$ are evaluated via trapezoidal quadrature on the $100 \times 100$ uniform grid in $(r, \theta)$. Training is performed with the LM optimizer using 200 iterations.

We run again 50 training instances, and the mean MSE loss was $7.08$E$-06$. The median and 90-10th percentile intervals are $8.94$E$-06$ and $[1.24$E$-07$, $1.04$E$-05]$. We test the model on an unseen test grid of size $50\times 50$ and compare to the true solution and report the MAE and $L^{\infty}$ errors in Table \ref{tab:error_metrics_inverse}. We highlight again that the approximation on the boundary achieves machine precision.

\begin{figure}[ht]
    \centering
    \begin{subfigure}[t]{0.32\textwidth}
        \centering
        \includegraphics[width=\textwidth]{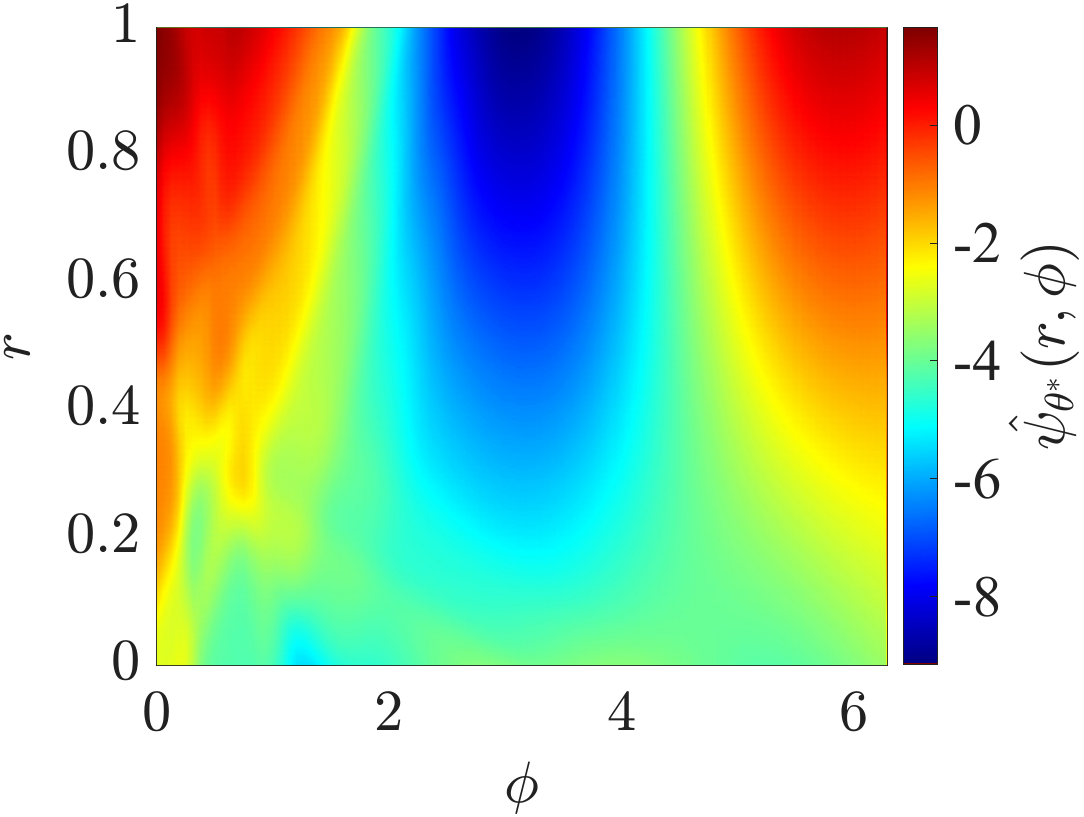}
        \caption{The learned source term $\psi_{{\theta^*}}$, where $\theta^*$ is the optimal set of model parameters.}
    \end{subfigure}
    \begin{subfigure}[t]{0.33\textwidth}
        \centering
        \includegraphics[width=\textwidth]{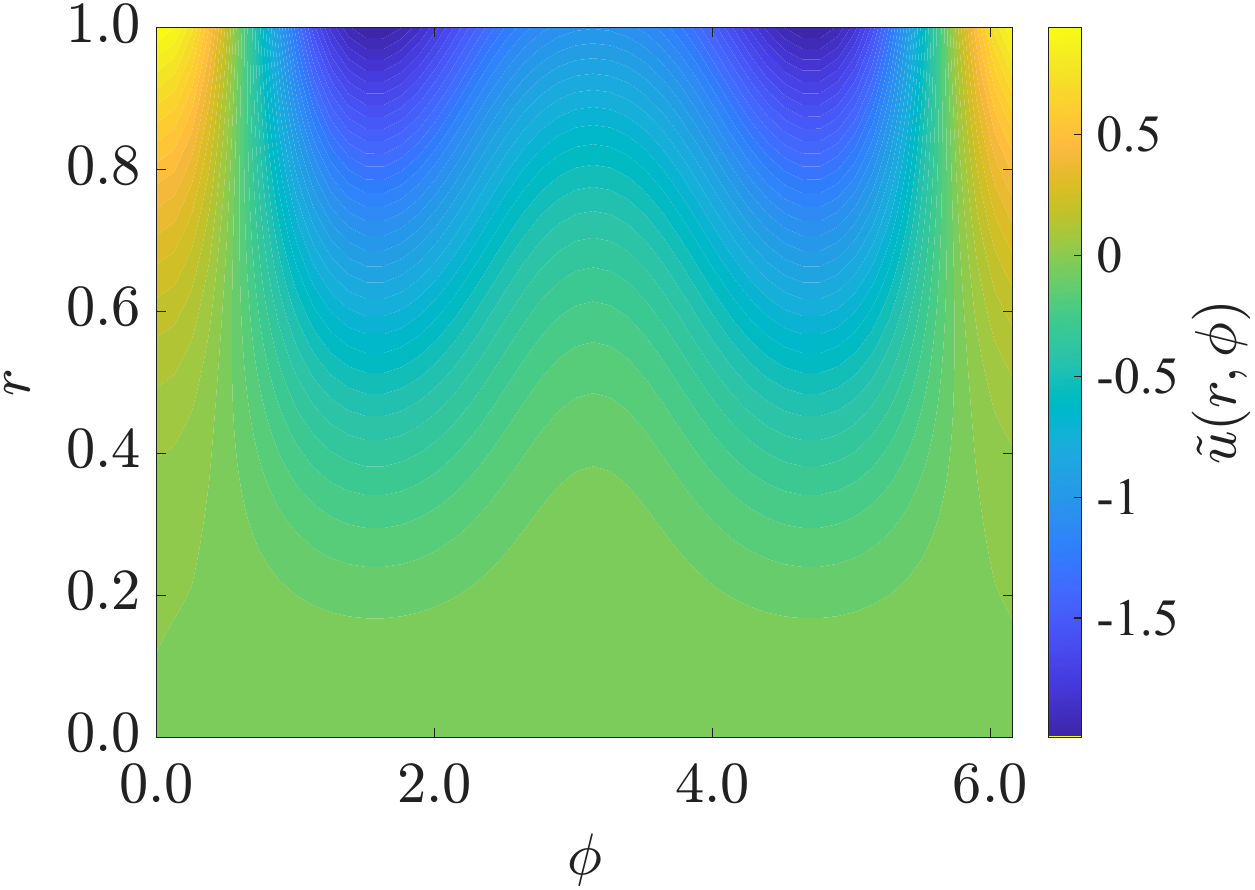}
        \caption{The absolute error between the solution of the PDE using the learned source term and the training data, $|\hat{u} - \tilde{u}|$.}
    \end{subfigure}
    \begin{subfigure}[t]{0.33\textwidth}
        \centering
        \includegraphics[width=\textwidth]{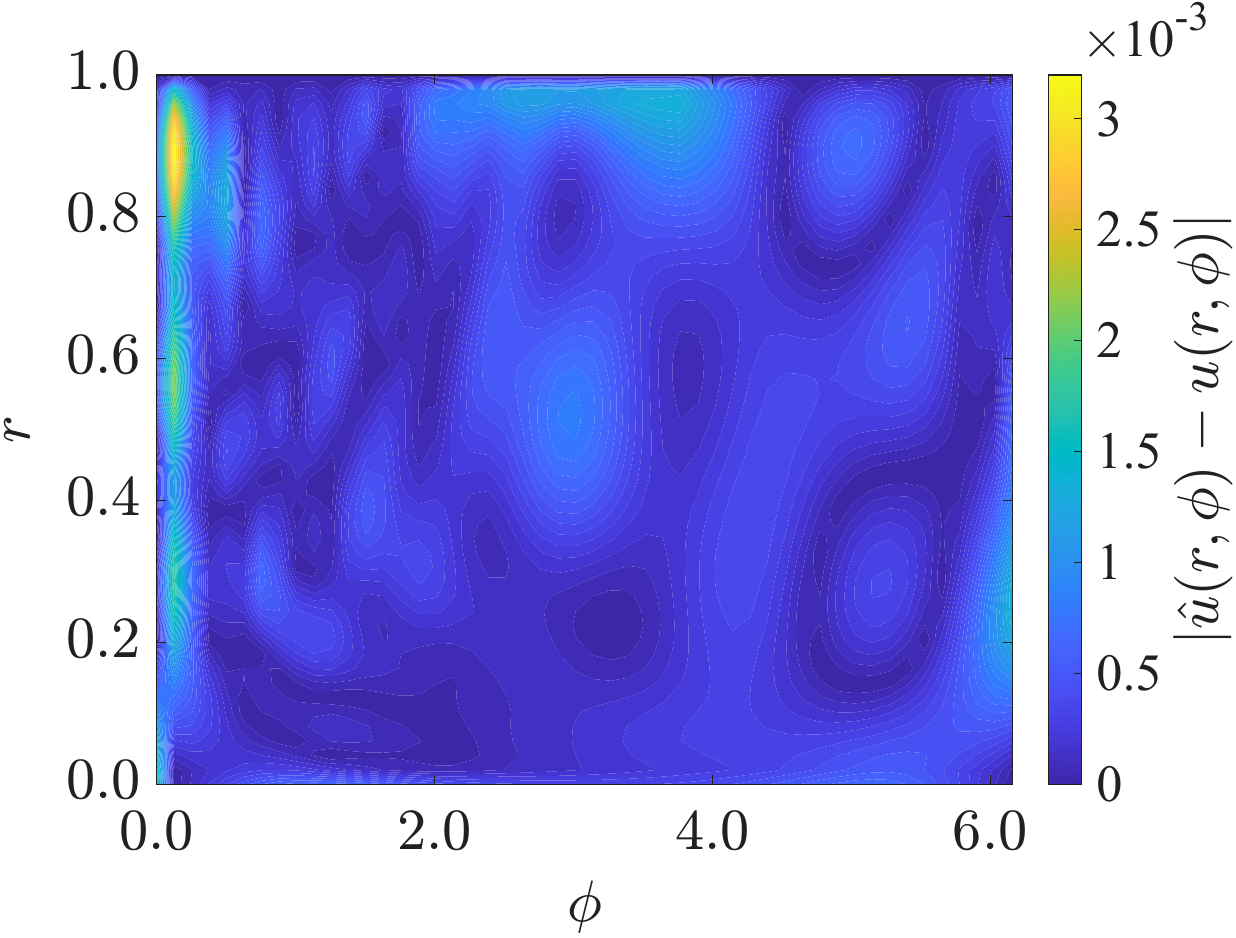}
        \caption{The absolute error between the solution of the PDE using the learned source term and the true solution on a test grid, $|\hat{u} - u|$.}
    \end{subfigure}
    \caption{Results of the Poisson inverse source problem \ref{inv-ex-helm-sec}.}
    \label{fig:example_inv}
\end{figure}

\subsection{2D semi-linear elliptic PDE}\label{inv-ex-sl-sec}
Finally, we consider the semi-linear equation of the form:
\begin{eqnarray}\label{inv-ex-sl}
    \Delta u(x) = \psi(x,u), 
\end{eqnarray}
on the 2D unit disc, with boundary condition $f(x) = 0, x  \in \partial \Omega$.

Suppose we have data from the true solution ${u}(r, \phi)$ over a $(r,\phi)$-grid of size $20\times 20$ and our goal is to estimate the unknown source term $\psi(x,u)$. For this example, we will use data from the true solution given by $u(r,\phi) = r^2(1-r^2)\sin(2\phi)$.
\par To learn the source function $\psi_{\theta}(x,u)$, we use a shallow neural network with a single hidden layer, 60 nodes and a $\tanh$ activation function. In this case, the input is three-dimensional $(x_1, x_2, u)$. For training we use Algorithm \ref{alg:inverse} with a PFNN with $M=100$ layers and a $\phi-$grid of size 100 corresponding to the interior nodes. The volume integrals involving the learned source are computed via 2D trapezoidal quadrature on a $150\times 150$ integration grid. Again, $50$ runs of this model training process using the LM optimizer with 150 iterations, and a Tikhonov regularization constant $\lambda_{reg} = 1.0$E$-12$.
\par The average training MSE is $3.05$E$-05$ and the corresponding 10-90th percentile intervals are $[4.26$E$-07, 4.49$E$-05]$ with median $2.13$E$-06$. Testing the model on the unseen grid produces the error metrics on the domain interior and boundary as given in Table \ref{tab:error_metrics_inverse}. In Fig. \ref{fig:example_inv} we display the contour of the learned source term function corresponding to the model achieving the best training MSE, the predicted solution $\hat{u}(x;\theta^*)$ and the absolute error when comparing to the true solution on the test grid.

\begin{figure}[ht]
    \centering
    \begin{subfigure}[t]{0.33\textwidth}
        \centering
        \includegraphics[width=\textwidth]{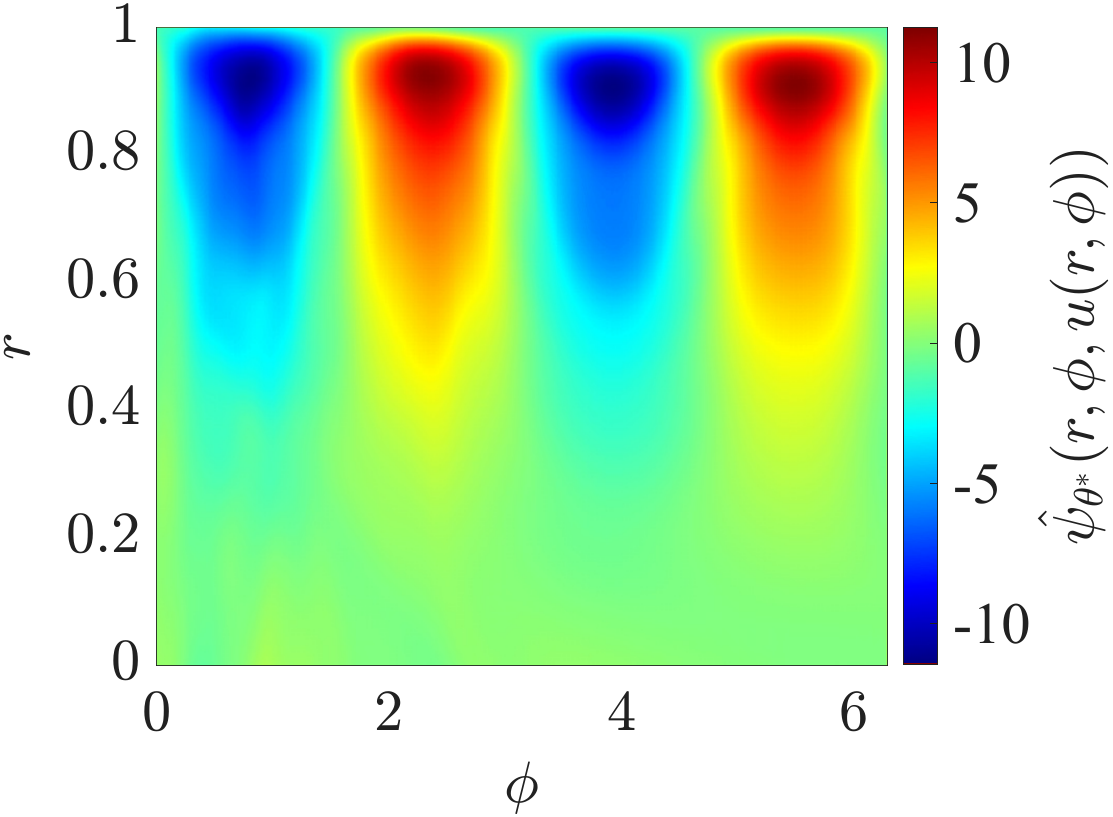}
        \caption{The learned source term $\psi_{{\theta^*}}$, where $\theta^*$ is the optimal set of model parameters.}
    \end{subfigure}
    \begin{subfigure}[t]{0.33\textwidth}
        \centering
        \includegraphics[width=\textwidth]{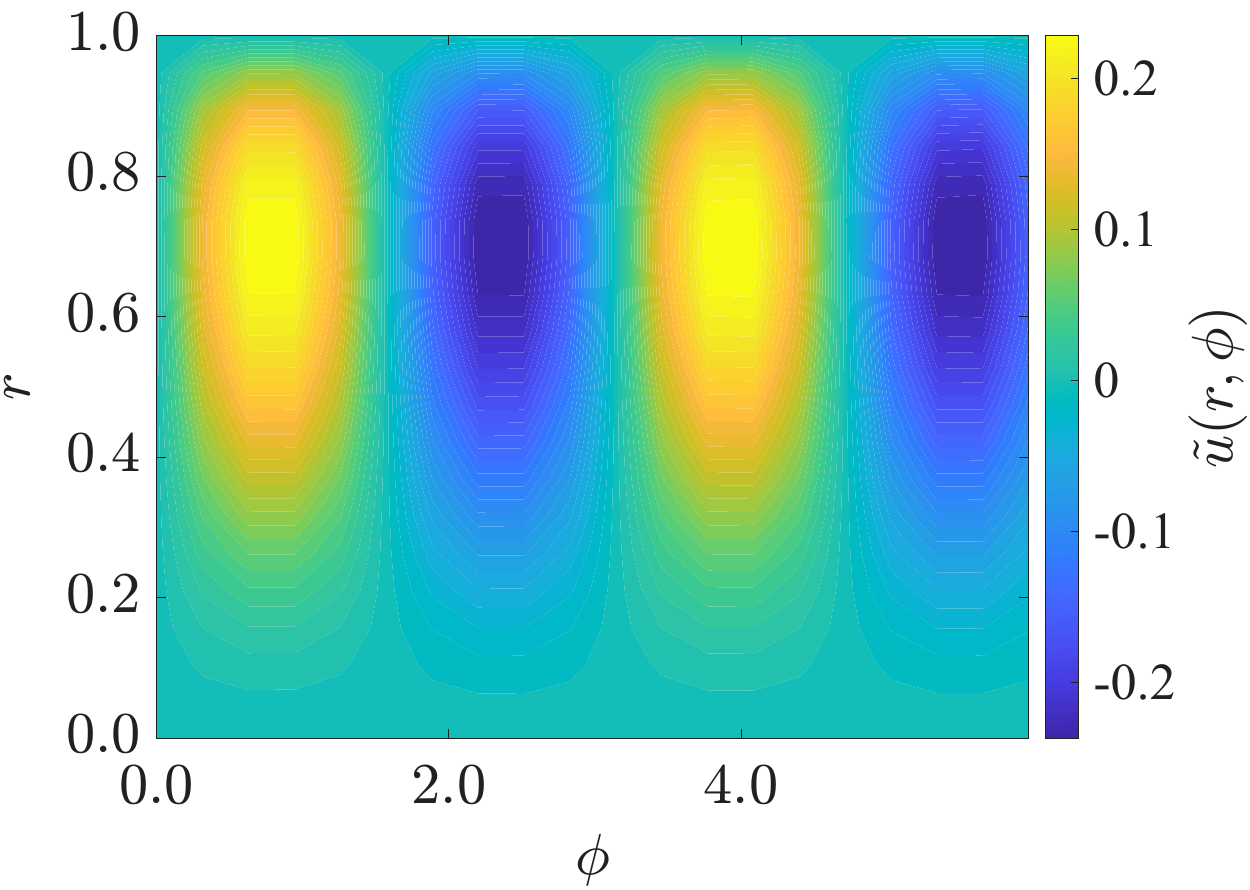}
        \caption{The absolute error between the solution of the PDE using the learned source term and the training data, $|\hat{u} - \tilde{u}|$.}
    \end{subfigure}
    \begin{subfigure}[t]{0.33\textwidth}
        \centering
        \includegraphics[width=\textwidth]{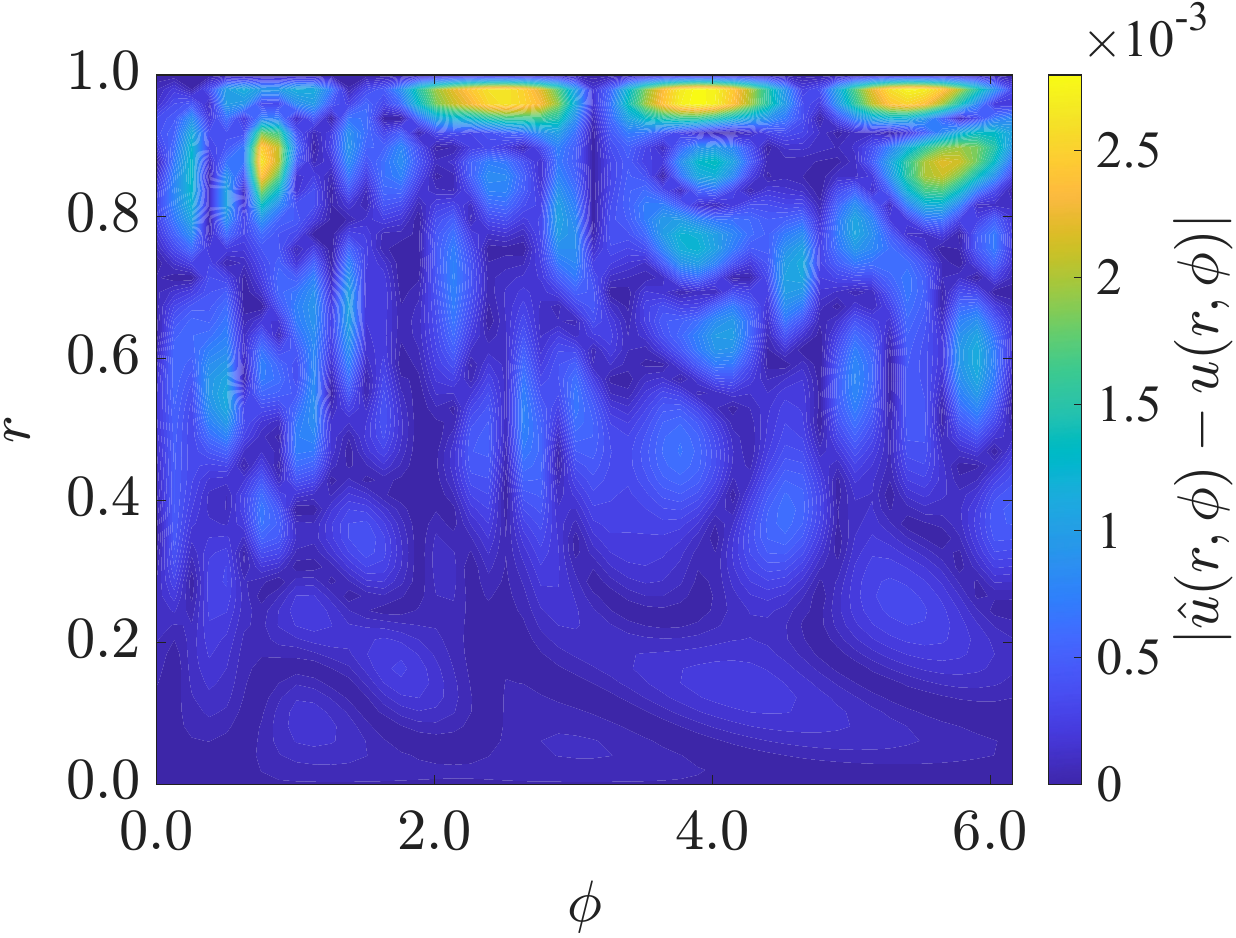}
        \caption{The absolute error between the solution of the PDE using the learned source term and the true solution on a test grid, $|\hat{u} - u|$.}
    \end{subfigure}
    \caption{Results of the inverse semi-linear inverse source problem \eqref{inv-ex-sl}.}
    \label{fig:example_inv_semi_linear}
\end{figure}

\subsection{3D Poisson PDE with spheroid boundary}\label{sec:example-3d-poisson-inverse}
We now extend the inverse source framework to three dimensions and to a 
geometry beyond the disk and sphere. We consider the 3D Poisson PDE:
\begin{equation}\label{inv-ex-3d}
    \Delta u(x) = \psi(x), \qquad x \in \Omega \subset \mathbb{R}^3,
\end{equation}
with boundary condition $f(x) = u(x)|_{\partial \Omega}$ given exactly, 
where $u$ is the manufactured solution defined below. The domain is 
the prolate spheroid
\begin{equation}
    \Omega = \left\{ x \in \mathbb{R}^3 \,:\,
    \frac{x_1^2}{a^2} + \frac{x_2^2}{a^2} + \frac{x_3^2}{c^2} < 1 \right\},
    \qquad a = 1,\; c = 1.5,
\end{equation}
and the unknown is the source term $\psi: \Omega \rightarrow \mathbb{R}$.

\par To manufacture a PDE with a closed-form solution suitable for straightforward error analysis, we consider a smooth isotropic Gaussian source term, given by:
\begin{equation}\label{eq:psi-true-3d}
    \psi(x) = \alpha \exp(-\beta |x|^2),
    \qquad \alpha = 1.0, \quad \beta = 5.0,
\end{equation}
and use the fact that the Newton potential $\mathcal{N}_{\psi}:= \int_{\Omega} \Phi(x,y)\psi(y)dy$ is a solution of the PDE $\Delta \mathcal{N}_{\psi} = \psi$, by definition. Specifically, with the selected source term given by \eqref{eq:psi-true-3d}, we get a closed form solution:
\begin{equation}\label{eq:newton-pot-gaussian}
    \mathcal{N}_{\psi}(r) = -\alpha\!\left[
    \frac{\sqrt{\pi}}{4\beta^{3/2}\, r}\,\mathrm{erf}\!\left(\sqrt{\beta}\, r\right)
    - \frac{e^{-\beta r^2}}{2\beta}
    \right]
    - \frac{\alpha}{2\beta}\, e^{-\beta r^2},
    \qquad r > 0,
\end{equation}
for $r = |x|$, and with the limit $\mathcal{N}_{\psi}(0) = -\alpha/(2\beta)$. Hence, we can choose the manufactured solution given by:
\begin{equation}
    u(x) = \mathcal{N}_{\psi}(|x|) + x_1 x_2 x_3.
\end{equation}
(Note that although $\psi$ does not have compact support, it decays 
to $\exp(-\beta c^2) \approx 1.3$E$-05$ at the spheroid's polar 
tip, so the global Newton potential \eqref{eq:newton-pot-gaussian} differs from the $\Omega$-restricted one  by a negligible amount relative to the inverse-problem error scale, and so we use the global formula as the reference for the closed form solution.)

\par Now, the 3D fundamental solution is $\Phi(x,y) = -\frac{1}{4\pi |x-y|}$, and 
unlike the 2D disk case, $\partial \Phi(x^*,y)/\partial n_y$ is no 
longer a constant function of $y$ on $\partial\Omega$, and furthermore exhibits a weak singularity. This requires a correction to ensure that the numerical value of the integral matches the theory (see Appendix \ref{appendix-poisson-3d} for the exact calculations). We note that such a correction is only one method of handling the weak singularity and many other approaches can be considered, which will be the subject of future work. 

\par For training, we again assume the boundary function $f$ is known. The training data consists of $200$ interior observations of the true $u$, and in this example, we prefer to sample uniformly in $\Omega$ (rather than consider a predefined grid of observations). To solve the inverse problem, we model the unknown source term as a  shallow neural network $\psi_\theta(x)$ with three inputs, a single hidden layer of $20$ nodes, and $\tanh$  activation. Training follows Algorithm \ref{alg:inverse}, using an
underlying PFNN uses $M = 200$ layers, a boundary discretization $N_\theta = N_\varphi = 60$, volume 
discretization of size $25\times2 0 \times 30)$, Krasnosel'skii-Mann constant $\kappa = 0.5$, and Tikhonov regularization $\lambda_{reg} = 1.0\text{E}{-10}$. Again, we perform $50$ independent runs of the training procedure with the Levenberg-Marquardt optimizer with $100$ iterations. 

\par Across the 50 model runs, the average training MSE is $1.79$E$-07$. The 10–90th percentile interval is $[8.02$E$-08$, $2.75$E$-07]$ with median $1.76$E$-07$. We report the mean absolute error (MAE) and $L^\infty$ errors of the recovered solution $\hat u(\,\cdot\,;\psi_{\theta^*})$ against the analytic reference $u$ on a 3D test grid $\Omega_\text{test}$ in the interior, and against $f$ on the boundary $\partial \Omega$, in Table \ref{tab:error_metrics_inverse}.

As in our 2D examples, we observe that the PFNN architecture satisfies the boundary condition to machine precision without any explicit boundary residual term in the loss function. We highlight that this holds even on the non-spherical 3D geometry, demonstrating that the row-sum-corrected discrete BIE preserves the exact boundary-data agreement that we proved in Theorem \eqref{UAT-inverse}. 

In Fig. \ref{fig:example_inv_3d} we display, for the model achieving the best training MSE, contour plots on a representative 2D slice through the spheroid (in spheroidal-polar coordinates $(\varphi, s) \in [0, 2\pi] \times [0, 1]$, with $s$ the radial fraction within the slice and the boundary $\partial \Omega$ at $s = 1$): the recovered source $\psi_{\theta^*}$, the predicted solution $\hat{u}$ and the absolute error and the absolute error $|\hat u - u|$ on the test grid.

\begin{figure}[ht]
    \centering
    \begin{subfigure}[t]{0.33\textwidth}
        \centering
        \includegraphics[width=\textwidth]{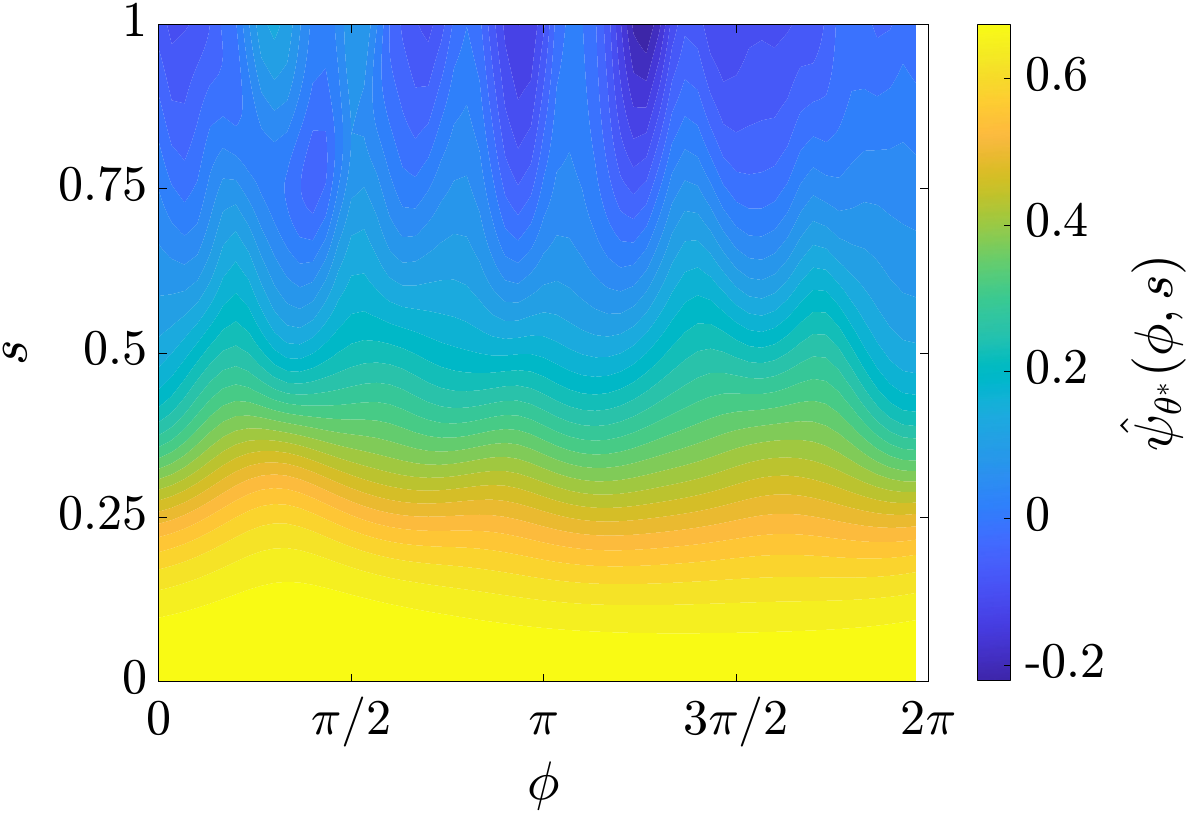}
        \caption{The learned source term $\hat{\psi}_{\theta^*}(\phi, s)$, where $\theta^*$ is the optimal set of model parameters.}
    \end{subfigure}
    \begin{subfigure}[t]{0.33\textwidth}
        \centering
        \includegraphics[width=\textwidth]{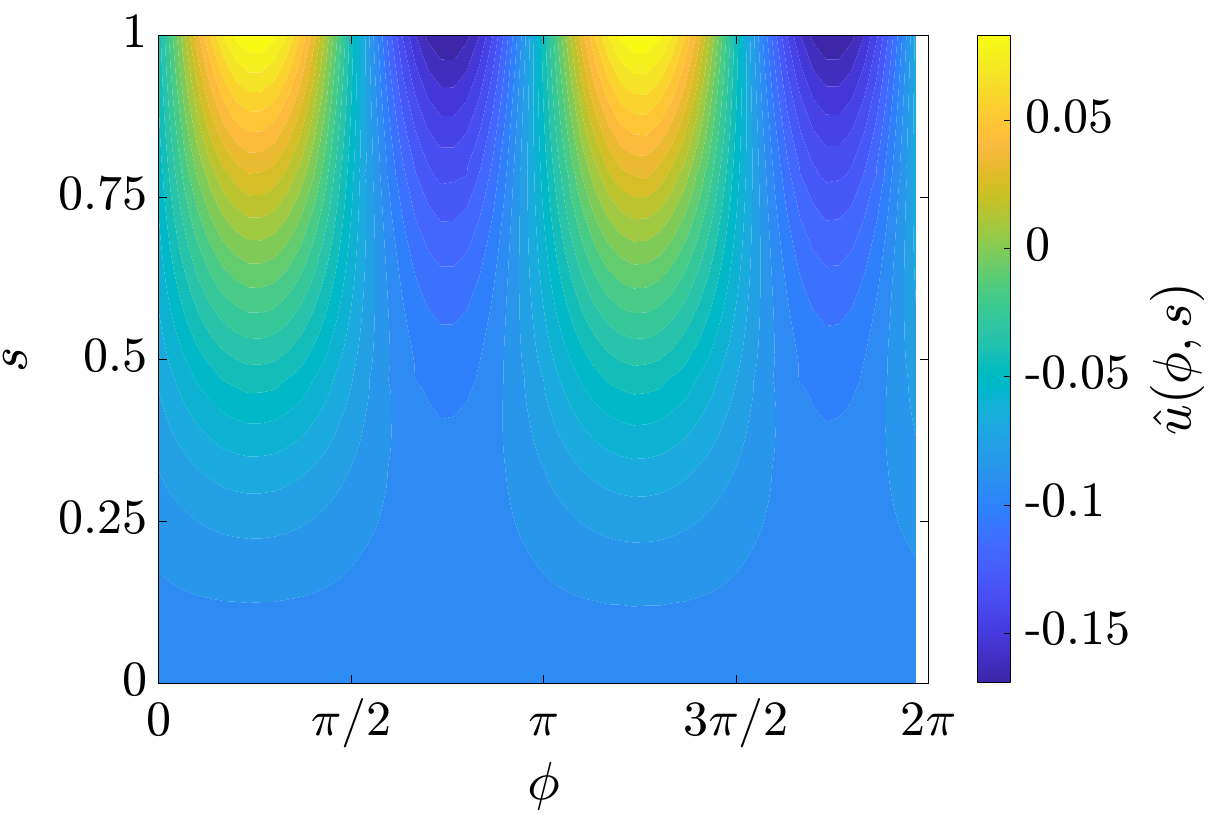}
        \caption{The solution of the PDE on the slice obtained using the learned source, $\hat{u}(\phi, s)$.}
    \end{subfigure}
    \begin{subfigure}[t]{0.33\textwidth}
        \centering
        \includegraphics[width=\textwidth]{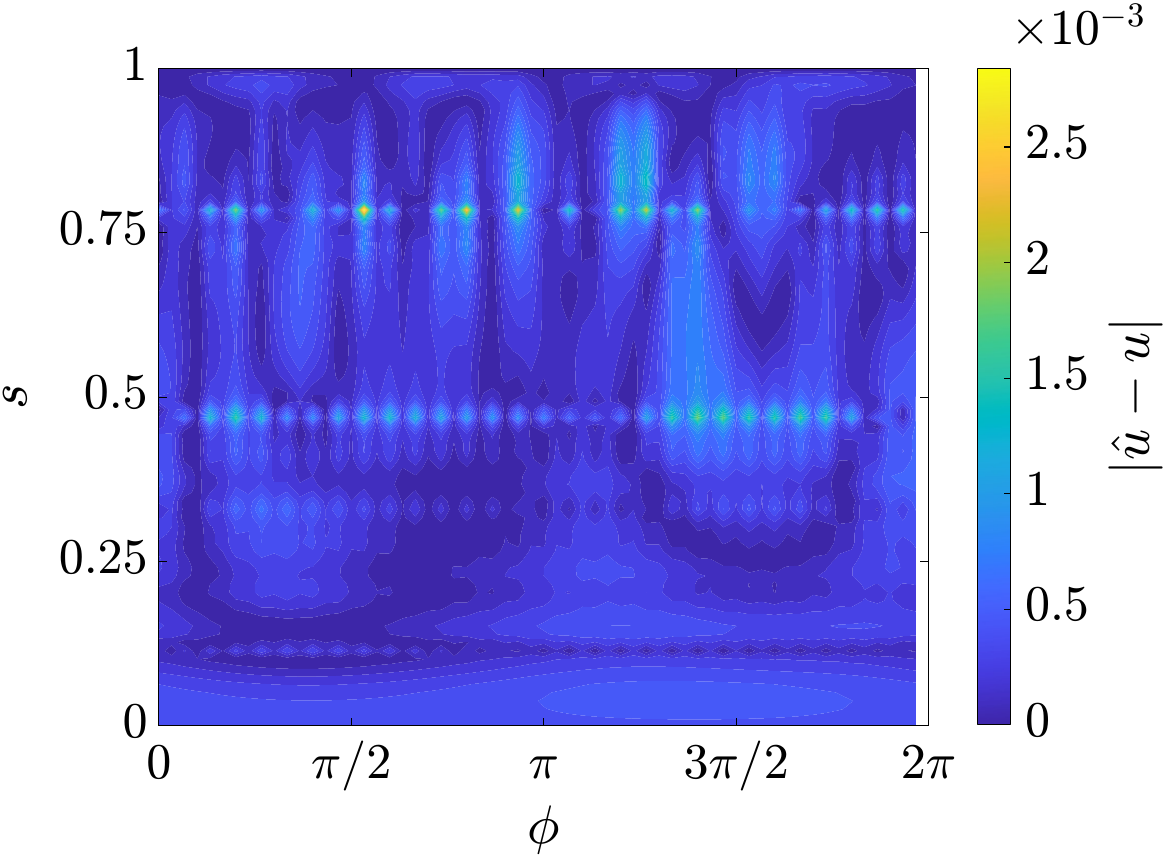}
        \caption{Contour of the absolute error between the solution of the PDE using the learned source term and the true solution on a test grid, $|\hat{u} - u|$.}
    \end{subfigure}
    \caption{Results of the 3D inverse source problem \eqref{inv-ex-3d} on 
the prolate spheroid, for the model achieving the best training MSE across the 50 runs. All four panels are evaluated on a 2D slice through the spheroid interior at $z \approx 0.32$, parametrized in polar coordinates $(\phi, s)$ with $\phi \in [0, 2\pi]$ and $s \in [0, 1]$. The slice is aligned with the boundary quadrature grid so that the $s = 1$ row of panel (c) reflects the BIE residual at quadrature nodes, where the boundary condition is satisfied to machine precision by construction.}
    \label{fig:example_inv_3d}
\end{figure}


\begin{table}[ht]
\centering
\begin{tabular}{@{}ll|ccc|ccc@{}}
\toprule
Example & Region ($\mathcal{A})$
  & \multicolumn{3}{c|}{MAE($\mathcal{A}$) $= \textbf{mean}_{x \in \mathcal{A}} |\hat{u} - u|$}
  & \multicolumn{3}{c}{$L^{\infty}(\mathcal{A}) = \textbf{max}_{x \in \mathcal{A}}|\hat{u} - u|$} \\
\cmidrule(lr){3-5}\cmidrule(lr){6-8}
& & Mean & Median & [10th\%,\,90th\%]
  & Mean & Median & [10th\%,\,90th\%] \\
\midrule
\multirow{2}{*}{Ex. \ref{inv-ex-poi-sec}}
  & $\Omega_{\mathrm{int}}$
  & 7.13E-04 & 5.14E-04 & [3.50E-04,\,1.50E-03]
  & 7.79E-03 & 6.5E-03 & [3.86E-03,\,1.48E-02] \\
  & $\partial\Omega$
  & 2.52E-15 & 2.48E-15 & [1.46E-15,\,4.28E-15]
  & 8.12E-15 & 7.99E-15 & [5.55E-15,\,1.31E-14] \\
\midrule
\multirow{2}{*}{Ex. \ref{inv-ex-helm-sec}}
  & $\Omega_{\mathrm{int}}$
  & 1.78E-03 & 2.13E-03 & [4.27E-04,\,2.45E-03]
  & 8.78E-03 & 1.04E-02 & [3.04E-03,\,1.11E-02] \\
  & $\partial\Omega$
  & 1.31E-15 & 1.33E-15 & [9.55E-16,\, 1.51E-16]
  & 5.85E-15 & 5.77E-15 & [3.55E-15,\,7.99E-15] \\
\midrule
\multirow{2}{*}{Ex. \ref{inv-ex-sl-sec}}
  & $\Omega_{\mathrm{int}}$
  & 2.24E-03 & 1.08E-03 & [5.26E-04,\,4.36E-03]
  & 1.48E-02 & 8.58E-03 & [4.46E-03,\,2.81E-02] \\
  & $\partial\Omega$
  & 2.56E-16 & 2.47E-16 & [1.72E-16,\,3.67E-16] 
  & 9.58E-16 & 9.16E-16 & [5.83E-16,\,1.39E-15] \\
  \midrule
  \multirow{2}{*}{Ex. \ref{sec:example-3d-poisson-inverse}}
  & $\Omega_{\mathrm{int}}$
  & 5.72E-04 & 5.61E-04 & [4.25E-04,\,7.61E-04]
  & 1.12E-02 & 1.11E-02 & [1.09E-02,\,1.15E-02] \\
  & $\partial\Omega$
  & 1.17E-16 & 1.17E-16 & [1.15E-16,\,1.12E-16] 
  & 1.40E-15 & 1.39E-16 & [1.21E-16,\,1.62E-15] \\
\bottomrule
\end{tabular}
\caption{Error metrics across the 50 training runs in the interior and on the boundary of the test grid.}
\label{tab:error_metrics_inverse}
\end{table}

\section{Conclusions and Discussion}\label{conclusion}
The concept of the representation of iterative numerical analysis algorithms with tailor-made deep neural network architectures dates back to 1990s' as seen in \cite{rico1992discrete}, where a feedforward multilayer NN was used to implement the fourth-order Runge-Kutta method. This area of research remains active and is evolving towards numerical analysis-informed/ explainable deep learning models \cite{guo2022personalized,zappala2022neural,lohner2023neural,doncevic2024recursively}. The idea of the framework is to construct DNNs to directly model the iterative process, with its parameters reflecting the specific steps and coefficients of the numerical method. The weights and biases as well as other hyperparameters of the DNNs, are precomputed based on the iterative method, so no training on a loss function is necessary. This tailor-made \textit{numerical analysis-informed} approach removes the curse of dimensionality arising in iterative optimization.

 Within this line of research, we build upon the method that we introduced in \cite{georgiou2024fredholm} for solving integral equations and extend the framework by developing the Potential Fredholm Neural Network and the Recurrent Potential Fredholm NN for solving linear and semi-linear elliptic PDEs, both in the case of the forward and inverse problem. This method relies on the reformulation of classical numerical methods, such as fixed point iterations in the Banach space, and numerical integration schemes based on the boundary integral method (BIE), into the neural network framework. We show that the underlying mathematical theory can be used to construct a DNN for which we can also obtain a-priori, explainable error bounds that allow us to analyze the contributing factors that create these errors. In total, the proposed framework describes a method of constructing DNNs whose architecture is directly related to numerical methods and therefore significantly enhances model explainability, as well as accuracy. In our examples, we show that this approach achieves high accuracy throughout the domain and near-machine precision the boundary of the PDEs, thereby satisfying identically the boundary conditions, a strict requirement in the field of numerical methods for PDEs. By giving rigorous constructions and proofs of the convergence and error bounds, we showcase how custom neural network architectures can be used to solve issues such as the accuracy on the boundary without the need of hard-encoding the conditions, as well as interpretable (in the ''classical'' numerical methods sense) error analysis.

\par Concluding, we believe the proposed framework creates a path of interesting future research. Such directions include the high-dimensional PDEs, highly nonlinear systems and time-dependent PDEs, as well as applications in the field Scientific Machine Learning, such as Uncertainty Quantification (UQ). 

\section*{Acknowledgements}
K.G. acknowledges support from the PNRR MUR Italy, project PE0000013-Future Artificial Intelligence Research-FAIR. 
C.S. acknowledges partial support from the PNRR MUR Italy, projects PE0000013-Future Artificial Intelligence Research-FAIR \& CN0000013 CN HPC - National Centre for HPC, Big Data and Quantum Computing, and from the GNCS group of INdAM. A.N.Y. acknowledges the use of resources from the Stochastic Modelling and 
Applications Laboratory, AUEB.

\appendix
\section{Appendix: Solution of the Forward Problem}\label{appendixx}
\subsection{Poisson PDE}\label{appendix-poisson}
Recall the form of the BIE (\ref{BIE}). To construct the integral equation, we have $\frac{\partial \Phi(x, y)}{\partial n_y} = n_y \cdot \nabla_y\Phi(x,y) $, with $n_y = \frac{y}{|y|}$. Therefore:
\begin{equation}\notag 
    \nabla_y \Phi(x, y) = \left( \frac{y_1 - x_1}{2 \pi |x - y|^2}, \frac{y_2 - x_2}{2 \pi |x - y|^2} \right),
\end{equation}
and writing in terms of the arc-length parameterization we get:
\begin{equation}
    \frac{\partial \Phi(x, y(\theta_2))}{\partial n_y} = \frac{r_2^2 - r_1 r_2 \cos (\theta_1 - \theta_2)}{2 \pi |x - y|^2 |y|}.
\end{equation}
For the BIE in particular we have $x, y \in \partial \Omega$, and the kernel in the integral operator will become:
\begin{equation}\notag
    \frac{\partial \Phi(x, y(s))}{\partial n_y} = \frac{1 - \cos(\theta_1 - \theta_2)}{2 \pi (2 - 2\cos(\theta_1 - \theta_2))} = \frac{1}{4 \pi},
\end{equation}
and, finally, the integral term is given by: 
\begin{flalign}\label{poisson-fund}
     \int_{\Omega} \Phi(x, y) \psi(y) dy &= \int_0^{2\pi} \int_0^1 \frac{1}{2\pi} \ln |x - y| \psi(y) dy =  \notag \\ &=\int_0^{2\pi} \int_0^1 \frac{1}{2\pi} \ln \left( (r_1 \cos \theta_1 - r_2 \cos \theta_2)^2 + (r_1 \sin \theta_1 - r_2 \sin \theta_2)^2 \right)^{\frac{1}{2}}
    \psi(r_2, \theta_2) r_2 dr_2 d\theta_2.
\end{flalign}
Written out explicitly, the BIE becomes:
\begin{flalign}\label{poisson-ex}
    \beta(x) = 2 \Big( f(x) - \int_0^{2\pi} \int_0^1 \frac{1}{2\pi} \ln \big((\cos \theta_1 - r_2 \cos \theta_2)^2 + (\sin \theta_1 - r_2 \sin \theta_2)^2 \big)^{\frac{1}{2}} 
    &\psi(r_2, \theta_2) r_2 dr_2 d\theta_2 \Big)
    \notag \\ &- 2 \int_{\partial \Omega} \beta(\theta) \frac{1}{4\pi} d\theta,
\end{flalign}
and the final layer of the PFNN is as described in Proposition \ref{prop-poisson}. As we have noted, the double integral in \eqref{poisson-fund} is weakly singular at points within the integration domain. Hence, using naive numerical integration schemes may cause large errors. To account for this, we use an adaptive quadrature approach explicitly specifying the points at which singularities arise, i.e., at $r_2 = r_1$ and $\theta_2 = \theta_1$ (this specification can easily be implemented both in \emph{Matlab} as well as \emph{Python}). In this way, we compute the integral term required both for the BIE and for the final layer of the neural network in accordance to \eqref{potential-form-1}.  Finally, the term $\int_{\partial \Omega} \frac{1}{4\pi} \beta(y(s)) ds$ in \eqref{potential-form-1} is approximated with a simple Riemann sum on the selected grid that we use to estimate the boundary function $\beta(x)$.

Here, for our illustrations, we consider the following Poisson PDE:
 \begin{equation}
 \Delta u(x) = 2x_1,
     \label{ex1}
 \end{equation}
on the unit disc, ${\Omega} = \{x \in \mathbb{R}^2: x_1^2 + x_2^2 \leq 1 \}$ with boundary condition $f(x)=0$, for $x \in  \partial {\Omega}$, 
for which, an analytical solution exists, and it is given by: 
\begin{equation}\label{example-poisson}
u(x) = \frac{1}{4}x_1(x_1^2 + x_2^2 -1).
\end{equation}
We consider a $\phi-$grid for the BIE of size $N = 1000$ to ensure sufficiently accurate approximation of the boundary function $\beta(x_i),$ for $x_i = (1,\phi_i)$, and an $r-$ grid of size $100$. To show the effect of the number of hidden layers in the FNN on the overall error in the solution of the PDE, we use various layers selections $M$ in the estimation of the boundary function $\beta(x)$. The solution and error metrics $L_{\infty}$ and Mean Absolute Error (MAE) are given in Fig. \ref{fig:poisson_1}. 

\begin{figure}[ht]
    \centering
    \begin{subfigure}[t]{0.43\textwidth}
        \centering
        \includegraphics[width=\textwidth]{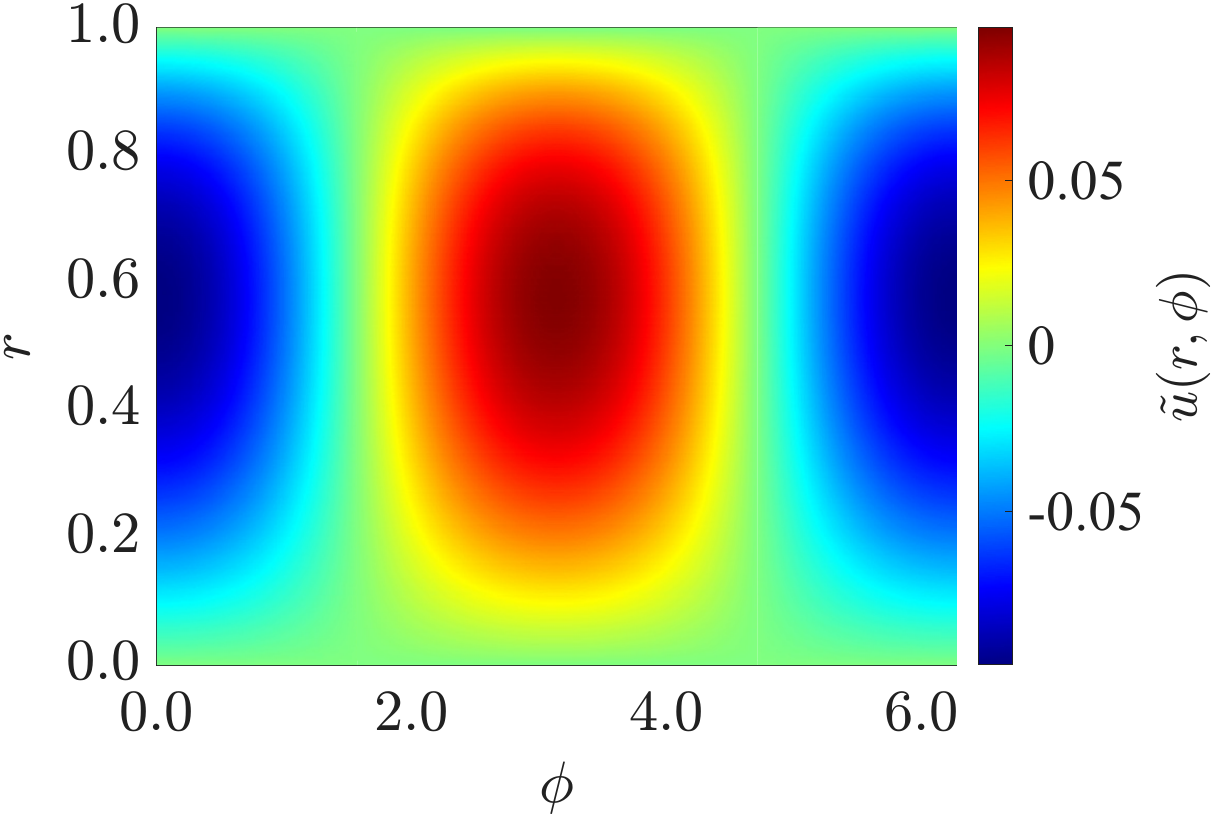}
        \caption{PFNN approximation $\tilde{u}$.}
        \label{subfig:poisson_1_u}
    \end{subfigure}
    \begin{subfigure}[t]{0.37\textwidth}
        \centering
        \includegraphics[width=\textwidth]{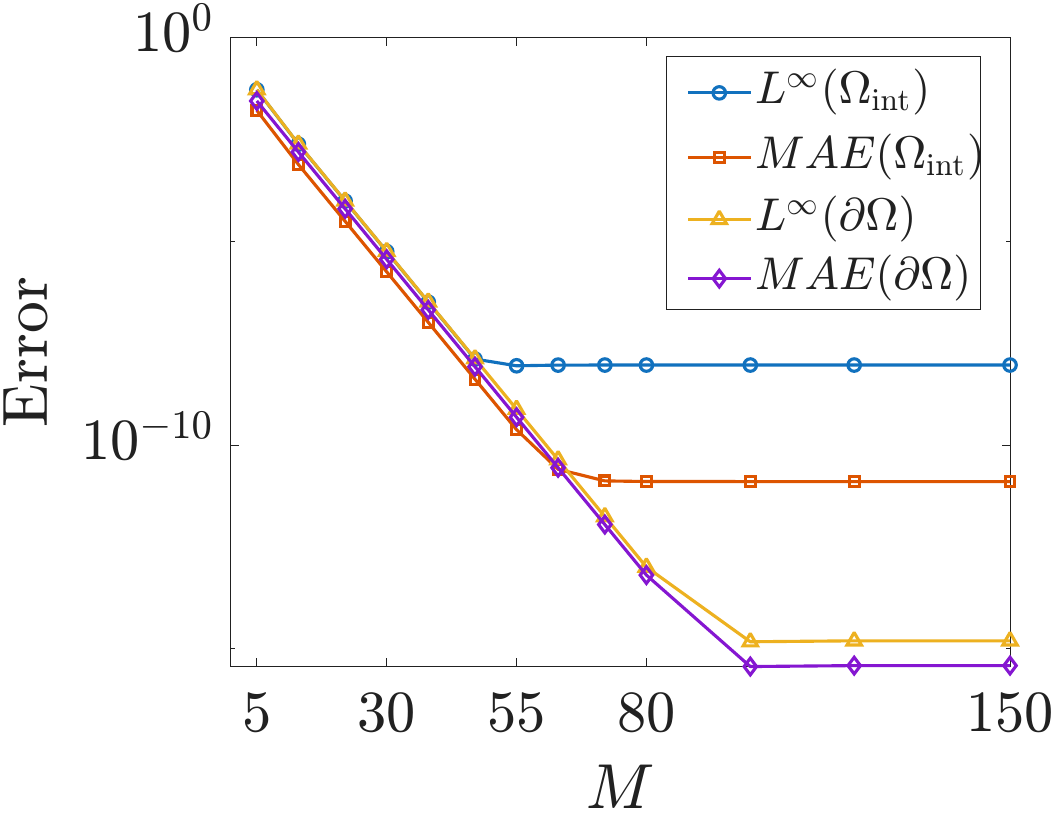}
        \caption{$L^{\infty}$ and MAE of the PFNN approximation in the interior $(\Omega_{\text{int}})$ and on the boundary ($\partial \Omega$) as a function of the hidden layers (log-scale).}
        \label{subfig:poisson_1_error}
        \end{subfigure}
    
    \caption{Results of the PFNN for the solution of the linear Poisson PDE example in section \eqref{inv-ex-poi-sec}.}
    \label{fig:poisson_1}
\end{figure}

\subsection{Helmholtz PDE}\label{appendix-helm}
For the PDE (\ref{helmholtz-pde}), the fundamental solution is given by $\Phi(x,y) = -\frac{1}{2\pi} K_0\left( \sqrt{\lambda} |x - y| \right)$. Hence, for $x = (x_1, x_2)$, $y = (y_1, y_2)$, we have that:
\begin{equation}
\frac{\partial \Phi(x,y)}{\partial y_1} = -\frac{\sqrt{\lambda} (y_1 - x_1)}{2\pi |x - y|} K_0' \left( \sqrt{\lambda} |x - y| \right)
\end{equation}
and similarly for the derivative with respect to $y_2$. Therefore, we obtain:
\begin{equation}
\frac{\partial \Phi(x,y)}{\partial n_y} = -\frac{\sqrt{\lambda} K_0' \left( \sqrt{\lambda} |x - y| \right)}{2\pi |x - y|} \big( y_1 (y_1 - x_1) + y_2 (y_2 - x_2) \big).
\end{equation}
From the properties of the Bessel function, we know that $K_0'(x) = -K_1(x)$. Furthermore, we know that the Bessel function exhibits a singularity at $x=y$. We consider this separately. Firstly, for $x,y \in \partial \Omega, x\neq y$ we have: 
\begin{equation}
\frac{\partial \Phi(x,y)}{\partial n_y} = \frac{\sqrt{\lambda} K_1 \left( \sqrt{\lambda} |x - y| \right)}{2\pi |x - y|^2} \left( y_1 (y_1 - x_1) + y_2 (y_2 - x_2) \right) = \frac{\sqrt{\lambda} K_1 \left( \sqrt{\lambda} |x - y| \right)}{4\pi |x - y|}.
\end{equation}
In this case, as $x \to y$, for $x,y \in \partial \Omega$ the Bessel function exhibits a singularity and we therefore have a weakly singular kernel in the integral operator for the BIE (in contrast to the Poisson PDE, as seen above). As this  affects the construction of the Fredholm NN, we examine the behavior of the kernel as $x \to y$. Specifically, we have that $K_0(z) \sim - \ln(z)$ (recall \cite{kropinski2011fast}). Then:
\begin{equation}
\Phi(x,y) \sim \frac{1}{2\pi} \ln \left( \sqrt{\lambda} |x - y| \right),
\end{equation}
and so:
\begin{equation}
\frac{\partial \Phi(x,y)}{\partial n_y} = \frac{y_1 (y_1 - x_1) + y_2 (y_2 - x_2)}{2\pi |x - y|^2} = \frac{1}{4\pi}.
\end{equation}
We therefore have a complete approach to numerically construct the kernel for the BIE, which takes into account the underlying mathematical theory of the fundamental solution. 
For our illustrations for this case, we consider the following  linear PDE:
\begin{equation}\label{ex-helmholtz}
\Delta u(x)- \lambda u(x) = -x_1^3 + 2x_2^2 + 6x_1 -4,
\end{equation}
with $\lambda = 1$ on the unit disc, $\Omega = \{x_1, x_2 \in \mathbb{R}: x_1^2 + x_2^2 \leq 1 \}$ with boundary condition $f(x):= x_1^3 + 2x_1^2 -2$ for $(x_1,x_2) \in  \partial \Omega$.
For the above PDE, the analytical solution exists and reads:
 \begin{equation*}
 u(x) = x_1^3 - 2x_2^2.
  \end{equation*}
We consider a $(r,\phi)$ grid of size $100\times 1000$. For the numerical integration of $\int_{\Omega} \lambda \delta \Phi(x, y) dy$ as well as the additional term $\int_{\Omega} \Phi(x, y) \psi(y) dy$ the adaptive quadrature scheme as detailed above is used in order to handle the kernel singularities with high accuracy. The resulting solution and errors are shown in Fig. \ref{fig:semi-linear}.
\begin{figure}[ht]
    \centering
        \begin{subfigure}[t]{0.45\textwidth}
        \centering
        \includegraphics[width=\textwidth ]{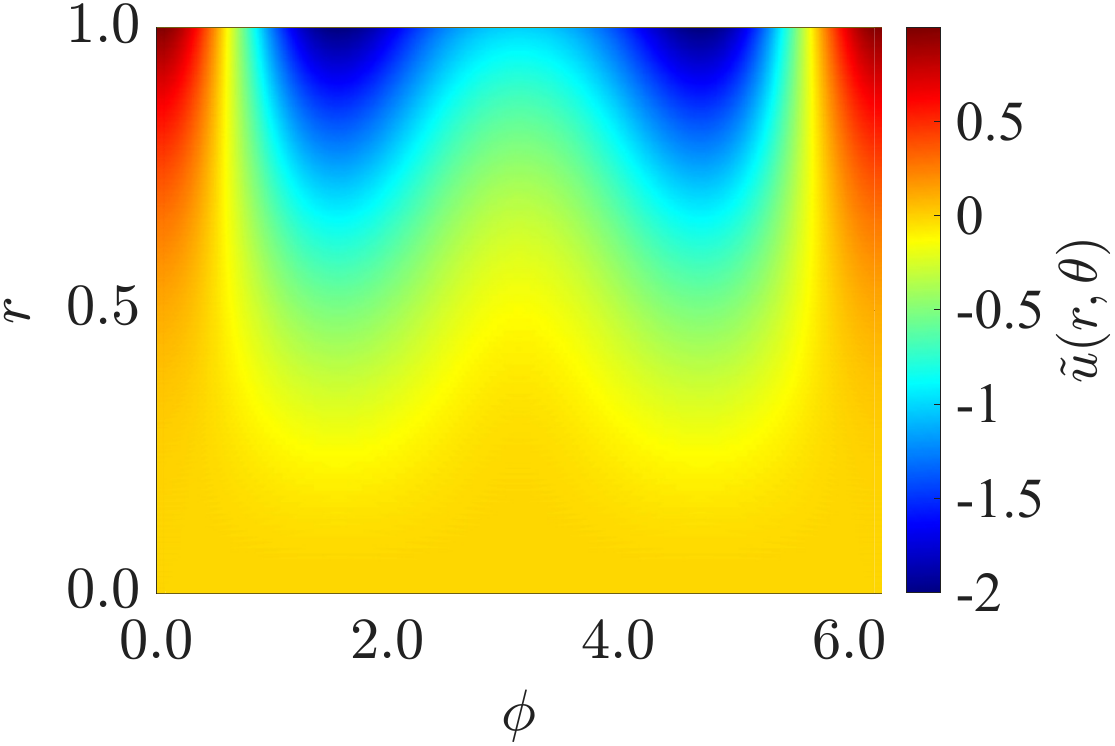}
        \caption{PFNN approximation $\tilde{u}$.}
    \end{subfigure}
    \begin{subfigure}[t]{0.40\textwidth}
        \centering
        \includegraphics[ width=\textwidth]{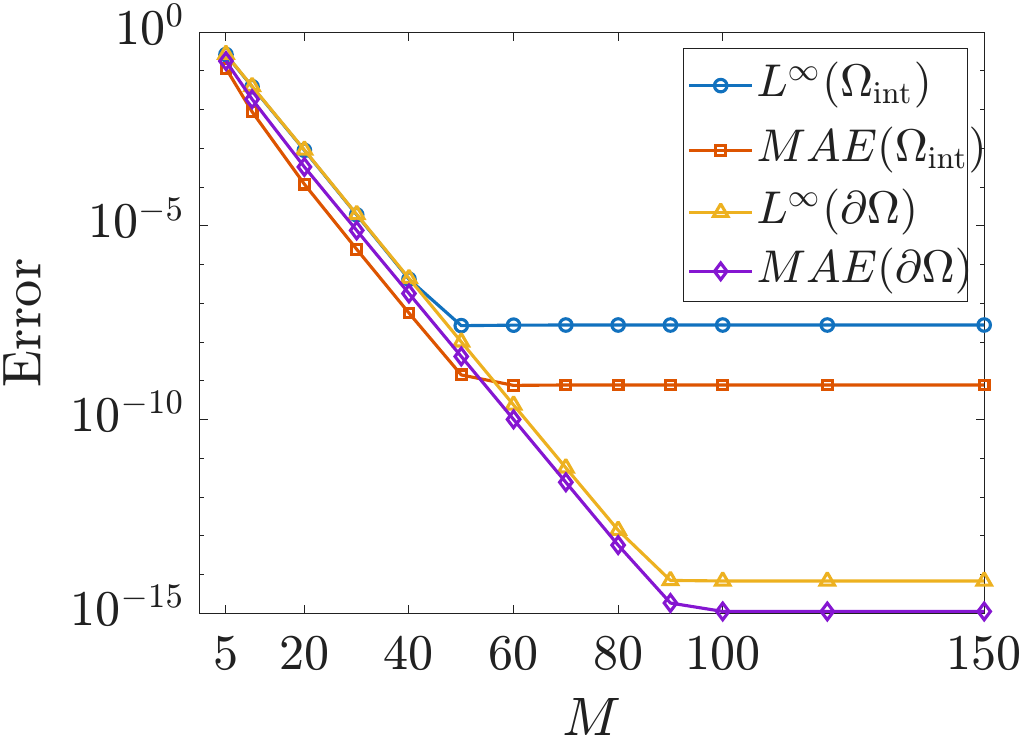}
        \caption{$L^{\infty}$ and MAE of the PFNN approximation in the interior of the domain, denoted $\Omega_{\text{int}}$, and on $\partial \Omega$ as a function of the hidden layers (log-scale).}\label{subfig:helmholtz-error}
    \end{subfigure}
\caption{Results of the PFNN for the Helmholtz PDE example in section \ref{inv-ex-helm-sec}.}
\label{fig:semi-linear}
\end{figure}

\subsection{Semi-linear elliptic PDE} 
\subsubsection{Picard iterations for the forward problem}

For solutions to the PDE \eqref{nl-pde}, fixed point schemes are often employed, which linearize the PDE at each step of the iteration. In line with this approach, we consider the monotone iteration scheme (see e.g., \cite{deng1996boundary}), as below: 

\begin{enumerate}
    \item Choose a $\lambda > 0$ "sufficiently large" (see below)
    \item Take an initial guess $u_0(x)$
    \item Solve, for $n = 0, 1, 2, \ldots$ the PDE:
    \begin{equation} 
    \begin{cases}
    \Delta u_{n+1}(x) - \lambda u_{n+1}(x) = -\lambda u_n(x) + \psi(x, u_n(x)), & x \in \Omega, \label{iteration}\\
    u_{n+1}(x) = f(x), & x \in \partial \Omega.
    \end{cases}
    \end{equation}
\end{enumerate}
    
Regarding the appropriate choice for $\lambda$, the value must be large enough to ensure the mapping:
\begin{eqnarray}\label{mapping}
T(u) = (\Delta - \lambda)^{-1}\big( - \lambda u + \psi(x,u) \big),
\end{eqnarray}
is a contraction, with the resulting iteration given by $u_{n+1} = T(u_n)$.
By Banach's fixed point theorem, this ensures that the scheme converges to the solution of the PDE (see \cite{deng1996boundary, sakakihara1987iterative} for such conditions and further details, and \cite{brebbia2016boundary} for other iterative schemes).\par This iterative scheme allows us to apply the proposed framework at each step of the scheme, whereby the source term will change at each iteration, in accordance to \eqref{iteration}.
Given that the source terms will depend on the solution $u_n(x)$, the numerical approximation of the integrals $\int_{\Omega} \Phi(x,y)\psi(y)dy$ must be considered. Here, we select a straightforward quadrature rule, across the points at which $u_n(x)$ has been calculated in order to avoid any interpolation-like approximations. For clarity, a full description of the fixed point algorithm solved via the FNN is given in Algorithm \ref{alg:nl-algo} and a schematic representation is given in Fig. \ref{fig:nl-algo}.

\begin{algorithm}[hbt!]
\caption{ Picard iteration for the PFNN solution to semi-linear PDEs}
\label{alg:nl-algo}
\small
\begin{algorithmic}
\State {Set number of iterations $N'$.}
\State {Discretize the domain $\Omega$ with interior points $\mathcal{X} = \{x_1, \ldots, x_{N}\}$ and the boundary $\partial \Omega$ with points $\mathcal{Y} = \{x^*_1, \ldots, x^*_N\}$, by the orthogonal projection $x_i^* := \text{argmin}_{y \in \partial \Omega} \|x_i - y\|$.}
\State {Select parameter $\lambda$ and initialize $u_0(x)$ (e.g., $u_0(x) = g(x)$ for all $x \in \mathcal{X}$).}
\Ensure The approximated fixed-point solution $u^*(x)$
\While{$n \leq N'$}
    \State {Step 1. Construct the PDE (\ref{iteration}) with source term $\psi_n(x) := -\lambda u_n(x) + \psi(x,u_n(x))$.}
    \State {Step 2. Approximate the particular solution via generalized quadrature:
    \begin{equation}
        \int_{\Omega} \Phi(x, y) \psi_n(y) dy \approx \sum_{j=1}^{M_\Omega} \Phi(x, x_j) \psi_n(x_j) \Delta y_j.
    \end{equation}
    }
    \State {Step 3. Solve the BIE (\ref{bie-sl}) via the FNN to find the density $\beta_n$ on $\mathcal{Y}$.}
    \State {Step 4. Solve the PDE using the PFNN (incorporating the projection $x_i^*$) to estimate $u_n(x_i)$ for all $i \in \{1, \ldots, M_\Omega\}$.}
    \State{Step 5. Set $n \rightarrow n+1$.}
\EndWhile \\
\Return {$u_{N'}(x)\approx u^*(x).$}
\end{algorithmic}
\end{algorithm}

\begin{figure}[ht]
    \centering
    \includegraphics[width=0.85\textwidth]{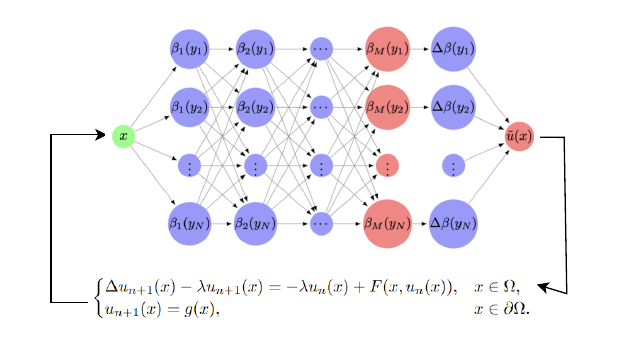}
    \caption{Schematic of the iterative Potential Fredholm NN approach to solve semi-linear elliptic PDEs. } \label{fig:nl-algo}
\end{figure}

We now turn to the Picard iterative approach to solve semi-linear problems, as described above. Firstly, we can also prove the corresponding error bound for the forward problem. 

\begin{lemma}
    Consider the non-linear PDE (\ref{nl-pde}), where $F(x,u)$ is $C^1$ with respect to $(x,u) \in \Omega \times \mathbb{R}$, and $\lambda >0$ such that the mapping $T(u)$, as given in \eqref{mapping}, is a $q$-contraction, with $q<1$.  Furthermore, let the approximation $\tilde{u}_n(x)$ be given by the Recurrent PFNN construction, as described in Algorithm \ref{alg:nl-algo}, and with known $u_0(x)$. Then, the following error bounds hold:
    \begin{flalign}\label{error-recurrent}
    \begin{cases}
        \left\|\tilde{u}_n - u \right\|_{\Omega} \leq  \epsilon^{PFNN}_{\Omega} + q^n\left\| u_0 - u  \right\|_{\Omega}, \\
        \left\|\tilde{u}_n - f \right\|_{\partial \Omega} \leq  \epsilon^{PFNN}_{\partial \Omega} + q^n\left\| u_0 - u  \right\|_{\partial \Omega},
    \end{cases}
    \end{flalign}
 where $\epsilon_{\Omega}^{PFNN}, \epsilon_{\partial \Omega}^{PFNN}$ denote the errors associated with the PFNN construction in the domain and on the boundary, as given by (\ref{error-domain}) and (\ref{error-boundary}) respectively.   
\end{lemma}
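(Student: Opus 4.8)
The plan is to regard the Recurrent PFNN of Algorithm \ref{alg:nl-algo} as a \emph{perturbed} Banach iteration for the contraction $T$ in \eqref{mapping}, and to push the one-step PFNN error of Theorem \ref{error-bounds-thrm} through the contraction. First I would fix notation: the exact monotone scheme is $u_{n+1}=T(u_n)$ with unique fixed point $u=T(u)$ equal to the PDE solution, so that $\|u_n-u\|_{\Omega}\le q^{\,n}\|u_0-u\|_{\Omega}$ by Banach's theorem. At step $n$ the algorithm forms the source $\psi_n(x)=-\lambda\tilde u_n(x)+F(x,\tilde u_n(x))$ and returns $\tilde u_{n+1}$ as the PFNN solution of the Helmholtz-type problem \eqref{iteration}; writing $\widehat T_n$ for this map we have $\tilde u_{n+1}=\widehat T_n(\tilde u_n)$, with $\tilde u_0=u_0$ taken exact. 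The crucial observation is that the \emph{exact} solution of the very boundary value problem solved at step $n$ is $T(\tilde u_n)$, so Theorem \ref{error-bounds-thrm} applied with source $\psi=\psi_n$ gives $\|\widehat T_n(\tilde u_n)-T(\tilde u_n)\|_{\Omega}\le \epsilon^{RPFNN}_{\Omega}$ and $\|\widehat T_n(\tilde u_n)-T(\tilde u_n)\|_{\partial\Omega}\le \epsilon^{RPFNN}_{\partial\Omega}$, the right-hand sides being exactly \eqref{error-domain} and \eqref{error-boundary}.

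The core step is the one-step estimate, obtained by inserting $T(\tilde u_{n-1})$ and using $u=T(u)$ together with the contraction property:
\[
\|\tilde u_n-u\|_{\Omega}\le\|\widehat T_{n-1}(\tilde u_{n-1})-T(\tilde u_{n-1})\|_{\Omega}+\|T(\tilde u_{n-1})-T(u)\|_{\Omega}\le \epsilon^{RPFNN}_{\Omega}+q\,\|\tilde u_{n-1}-u\|_{\Omega}.
\]
Unrolling this recursion from $\|\tilde u_0-u\|_{\Omega}=\|u_0-u\|_{\Omega}$ gives $\|\tilde u_n-u\|_{\Omega}\le \epsilon^{RPFNN}_{\Omega}\sum_{j=0}^{n-1}q^{\,j}+q^{\,n}\|u_0-u\|_{\Omega}\le \tfrac{1}{1-q}\epsilon^{RPFNN}_{\Omega}+q^{\,n}\|u_0-u\|_{\Omega}$, which is the first inequality of \eqref{error-recurrent}, the harmless geometric constant $\tfrac{1}{1-q}$ being absorbed into $\epsilon^{RPFNN}_{\Omega}$. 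The boundary inequality follows from the identical recursion with $\partial\Omega$ in place of $\Omega$ and the bound \eqref{error-boundary}; here one may additionally note that every $T(v)$ attains the Dirichlet data $f$ exactly, so the middle term $\|T(\tilde u_{n-1})-T(u)\|_{\partial\Omega}$ vanishes and in fact $\|\tilde u_n-f\|_{\partial\Omega}\le\epsilon^{RPFNN}_{\partial\Omega}$, which is consistent with (and stronger than) the stated bound.

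The main obstacle is making the per-step error genuinely uniform in $n$: the quantities entering \eqref{error-domain}--\eqref{error-boundary} depend, through $\|\beta_M\|$, $D_1,\dots,D_4$ and $\psi_n$, on the current iterate $\tilde u_n$, so one must verify that the orbit $\{\tilde u_n\}$ remains in a fixed bounded set on which $F$ is Lipschitz (guaranteed on compacts by the $C^1$ hypothesis), the sources $\psi_n$ and densities $\beta^{(n)},\beta_M^{(n)}$ are uniformly bounded, and the discretization errors uniformly controlled. I would handle this by the standard a-priori argument: on the ball $\bar B(u,\|u_0-u\|_{\Omega})$ let $\epsilon_*$ be the supremum of the PFNN bound over that ball; if the FNN depth and the integral discretizations are chosen fine enough that $\epsilon_*\le(1-q)\|u_0-u\|_{\Omega}$, then the one-step estimate shows inductively that $\tilde u_n$ never leaves this ball, so $\epsilon^{RPFNN}:=\epsilon_*$ is a legitimate uniform bound and the recursion closes. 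The contraction constant $q<1$ is exactly what makes such a self-improving ball exist.
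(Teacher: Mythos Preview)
Your argument is correct and in fact more careful than the paper's. The paper proceeds by the single splitting $\|\tilde u_n-u\|_{\Omega}\le\|\tilde u_n-u_n\|_{\Omega}+\|u_n-u\|_{\Omega}$, where $u_n$ is the \emph{exact} monotone iterate $T^n(u_0)$; the second term is bounded by $q^n\|u_0-u\|_{\Omega}$ via Banach, and the first is declared to be $\epsilon^{RPFNN}_{\Omega}$ by appeal to \eqref{error-domain}. This is terse: since $\tilde u_n$ is produced from $\tilde u_{n-1}$ rather than from $u_{n-1}$, the quantity $\|\tilde u_n-u_n\|$ is not literally a single PFNN error, and the paper is implicitly absorbing the propagation of per-step errors into the symbol $\epsilon^{RPFNN}_{\Omega}$. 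Your perturbed-iteration analysis makes this propagation explicit, yielding the honest constant $\sum_{j<n}q^j\le(1-q)^{-1}$ in front of the per-step bound, and your final paragraph on the uniformity of $\epsilon^{RPFNN}$ in $n$ (via an invariant ball for the orbit $\{\tilde u_n\}$) addresses a point the paper leaves unstated. Your observation that $T(v)|_{\partial\Omega}=f$ for every $v$, so that the boundary estimate collapses to $\|\tilde u_n-f\|_{\partial\Omega}\le\epsilon^{RPFNN}_{\partial\Omega}$ without any $q^n$ term, is a genuine sharpening of the stated bound. In short: same destination, but your route keeps track of the constants and the bootstrapping that the paper's two-line proof elides.
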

\begin{proof}
Notice that we can decompose the error into a component arising from the fixed point iterations and another due to the neural network approximation by writing $\left\|\tilde{u}_n - u \right\|_{\Omega} \leq \left\|\tilde{u}_n - u_n \right\|_{\Omega} + \left\|u_n - u \right\|_{\Omega}$. Substituting the error bound (\ref{error-domain}) and using the known error bound for the fixed point iterations results in (\ref{error-recurrent}). The error on the boundary follows in exactly the same fashion.
\end{proof}

For our illustrations, we consider the following 2D PDE:

\begin{eqnarray}\label{semi-linear-pde-example}
\begin{cases}
\Delta u(x) - e^{u} =  - e^{1-x_1^2 -x_2^2} - 4, \quad x \in \Omega \\
u(x) = 0, \quad x \in \partial \Omega,
\end{cases}
    \label{ex_nl}
\end{eqnarray}
which is in the form of the well-known and studied Liouville–Bratu–Gelfand  equation (see e.g., (\cite{gel1963some,wan2004thermo,jacobsen2002liouville} for examples and further analysis).
The above form is selected such that the analytical solution exists given by: 
\begin{equation}
u(x_1,x_2) = 1-(x_1^2 + x_2^2).
\end{equation}
We construct the Recurrent PFNN, where in each iteration a PFNN with $100$ hidden layers is used, and perform 12 iterations of Algorithm \ref{alg:nl-algo}, with $\lambda =1.0$. In Fig. \ref{fig:example_nl} we display the results when using a $(r,\phi)$ grid of size $120\times 120$. 


\begin{figure}[ht]
    \centering
    \begin{subfigure}[t]{0.45\textwidth}
        \centering
        \includegraphics[width=\textwidth]{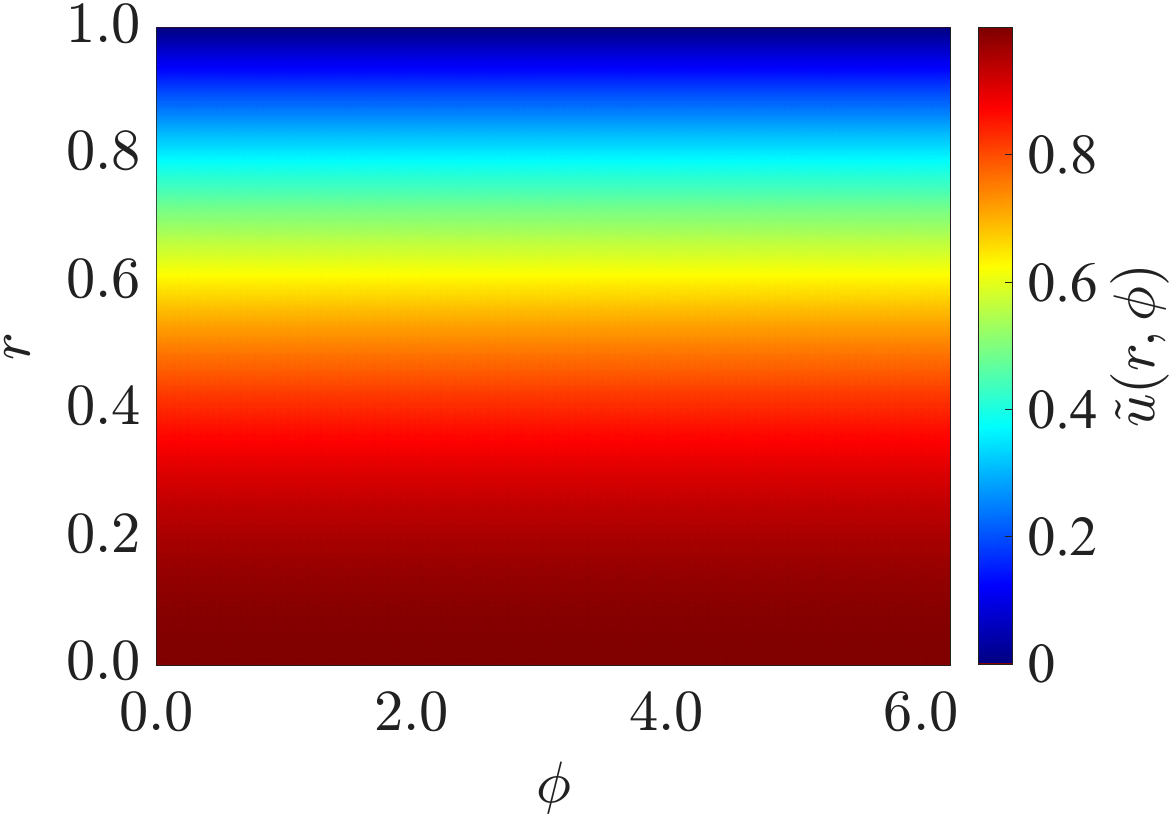}
        \caption{Recurrent PFNN solution $\tilde{u}$.}
        \label{subfig:non_linear_50_u}
    \end{subfigure}
    \begin{subfigure}[t]{0.43\textwidth}
        \centering
        \includegraphics[width=\textwidth]{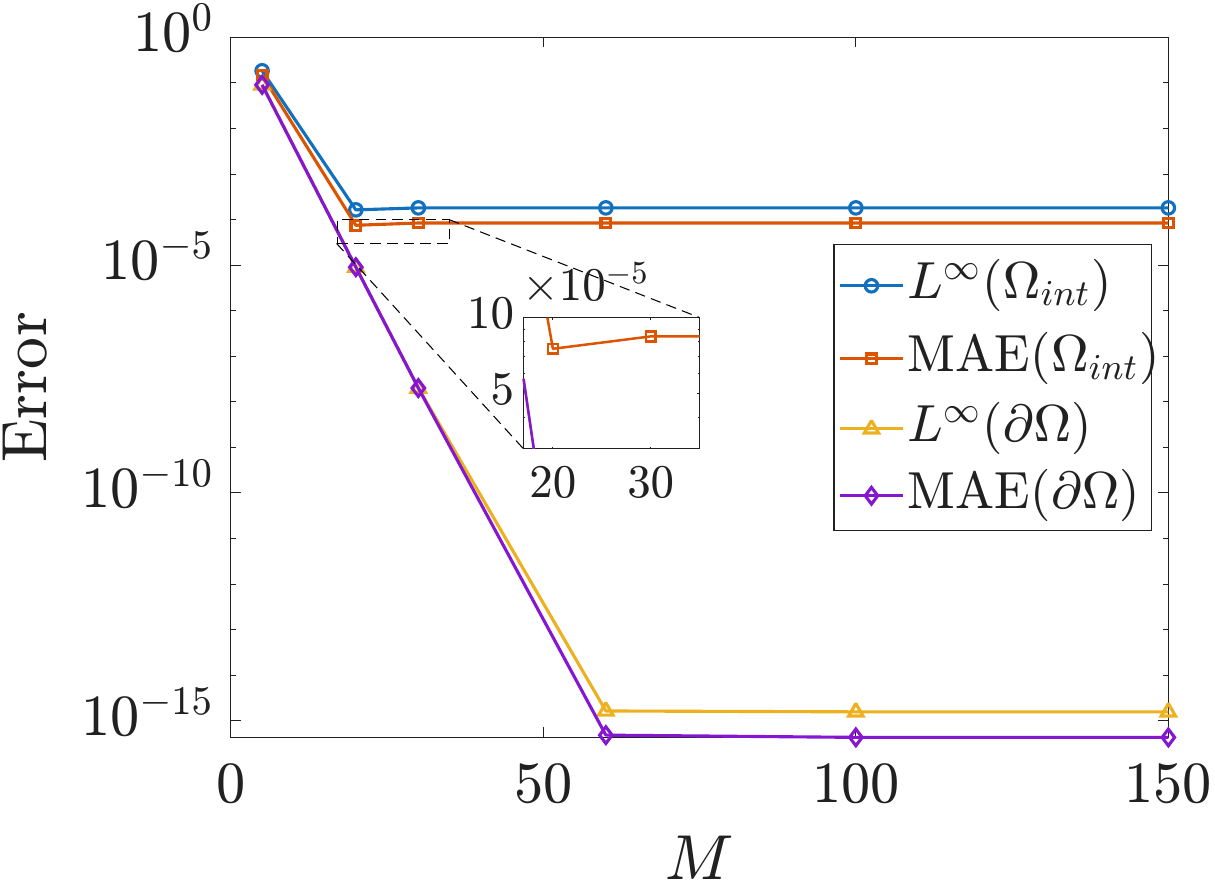}
        \caption{$L^{\infty}$ and MAE of the iterative PFNN approximation in $\Omega_{\text{int}}$ and on $\partial \Omega$ as a function of the hidden layers (log-scale). The inset plot is explained in section \ref{explainability}.}
        \label{subfig:non_linear_50_error}
    \end{subfigure}
    \caption{Results of the iterative Potential Fredholm Neural Network construction for the semi-linear PDE example in section \eqref{inv-ex-sl-sec}.}
    \label{fig:example_nl}
\end{figure}
The above examples show that the proposed framework is able to achieve highly accurate performance both in the interior and on the boundary of the domain. Of course, we note that alternative numerical integration schemes can also be considered and implemented within the PFNN construction.

\subsubsection{An interesting note on explainability using PFNNs}\label{explainability}
The benefits of the proposed approach lie primarily in its potential to better understand and, therefore, explain the source of errors 
as for example, those in Fig. \ref{subfig:poisson_1_error} and \ref{subfig:helmholtz-error}. In these two cases, we see that the errors decrease monotonically (though not necessarily strictly) with the number of hidden layers. This result is explainable given the results of Theorem \ref{error-bounds-thrm}, where we have shown that the error bound depends on the approximation $\| \beta - \beta_M \|$, as $M\to \infty$ (note that such a behaviour is not observed in PINNs in which case, the performance varies significantly when increasing the number of hidden layers).

However, it is also shown that the error bound \eqref{error-domain} depends on $\|\beta_M \|_{\partial\Omega}$, i.e., the Fredholm NN approximation of the boundary function itself. Hence, since this value varies for different $M$, it is possible to observe larger errors as $M$ increases, depending on the comparison between the terms depending on each of the two factors mentioned above. 
\par Taking PDE \eqref{semi-linear-pde-example} as an example, these error bounds allow us to interpret and explain the errors seen in Fig. \ref{subfig:non_linear_50_error}, where a very slight increase can be observed in the interior error metrics when considering the Recurrent PFNN approximations with $M=20$ and $M= 30$. Specifically, let us consider the first iteration in the implementation described for \eqref{ex_nl}. We have that $\|\beta_{M=20} \|_{\partial \Omega} = 1.0442995$, $\|\beta_{M=30}\|_{\partial \Omega} = 1.0443096$. This difference is translated into an increased error between the iterative PFNN approximation in each case, with $\| \tilde{u}_1 - u\|_{ \Omega} \approx 4.0794324$E$-01$ and $ 4.0794929$E$-01$ for $M=20$ and $M=30$, respectively. This difference in the error is then propagated throughout the remaining iterations of the iterative PFNN, as expected and in accordance with \eqref{error-recurrent} (since the second term in the error bound is constant for both schemes). For larger $M$ the error then remains constant, as the limit of both $\| \beta - \beta_M \|_{\partial \Omega}$ and $\|\beta_M \|_{\partial \Omega}$ has been reached. 
\par These results indicate that, considering alternative mathematical formulations for the solution of PDEs that can be connected to a DNN model architecture, we can construct DNN models that adhere to explainable and interpretable behavior as hyperparameters, such as the number of hidden layers vary. We note that similar results hold for the number of nodes in the layers of the neural network (recall that in the iterative PFNN construction, the nodes correspond to the discretization of the integral in the BIE).

\subsection{3D Poisson PDE}\label{appendix-poisson-3d}

The construction of the BIE \eqref{BIE} extends naturally to three 
dimensions, with two notable structural differences from the 2D case 
that we make explicit here. Firstly, the 3D fundamental solution is:
\begin{equation}
    \Phi(x,y) = \frac{-1}{4\pi |x-y|},
\end{equation}
and secondly, the simplification that holds for $x^*, y$ 
on the unit disk's boundary in 2D does extend to 3D, nor to any non-spherical boundary. The kernel must therefore be retained explicitly in the BIE. Computing the kernel in 3D, we have 
$\partial \Phi(x,y)/\partial n_y = n_y \cdot \nabla_y \Phi(x,y)$ with
\begin{equation}
    \nabla_y \Phi(x,y) = \frac{y - x}{4\pi |x-y|^3}, 
    \qquad
    \frac{\partial \Phi(x,y)}{\partial n_y} = \frac{(y - x) \cdot n_y}{4\pi |x-y|^3}.
\end{equation}
For an arbitrary smooth bounded domain $\Omega \subset \mathbb{R}^3$ 
with outward unit normal $n_y$ at $y \in \partial\Omega$, this expression 
is a non-constant function of $y$. It is integrable but 
weakly singular at $y = x$.

For the BIE, taking $x, y \in \partial\Omega$ and applying the standard 
double-layer jump relation gives the Fredholm equation of the second kind 
for the boundary density $\beta$:
\begin{equation}\label{poisson-3d-bie}
    \beta(x) = 2\,\Bigg( f(x) - \int_{\Omega} \Phi(x,y)\,\psi(y)\,dy \Bigg) 
              - 2\int_{\partial\Omega} \beta(y)\,\frac{(y-x)\cdot n_y}{4\pi |x-y|^3}\,d\sigma_y, 
              \qquad x \in \partial\Omega.
\end{equation}
Unlike the 2D disk case, where the boundary kernel reduces to the 
constant $1/(4\pi)$ and the second integral simplifies to a multiple 
of the boundary integral of $\beta$, in 3D the kernel must be evaluated 
pointwise on $\partial\Omega$. The volume integral is given explicitly 
by
\begin{equation}\label{poisson-3d-vol}
    \int_{\Omega} \Phi(x,y)\,\psi(y)\,dy 
    = -\int_{\Omega} \frac{\psi(y)}{4\pi |x-y|}\,dy,
\end{equation}
which is again weakly singular at $y = x$ but integrable in 3D.

Two numerical adaptations are required relative to the 2D case. The 
first is the surface quadrature on $\partial\Omega$, where 
parameterize the boundary by spherical-style angles 
$(\theta, \varphi) \in [0,\pi] \times [0,2\pi)$ and substitute $\mu = \cos\theta$ to absorb the polar Jacobian factor and map the polar variable to $[-1,1]$. This is done in order to apply-Legendre quadrature. For example, for the spheroid boundary
$\Omega = \{x_1^2/a^2 + x_2^2/a^2 + x_3^2/c^2 < 1\}$, 
parameterized by $y(\theta,\varphi) = (a\sin\theta\cos\varphi,\, 
a\sin\theta\sin\varphi,\, c\cos\theta)$, the surface element after the 
substitution is:
\begin{equation}\label{eq:spheroid-area-element}
    d\sigma = J(\mu)\,d\mu\,d\varphi, 
    \qquad
    J(\mu) := a\sqrt{c^2(1-\mu^2) + a^2 \mu^2}.
\end{equation}
We can now discretize this double integral by a tensor-product rule combining 
$N_\theta$-point Gauss-Legendre quadrature in $\mu$ with weights 
$\{a_{\mu,i}\}$ (spectrally accurate on smooth non-periodic integrands), 
and $N_\varphi$ uniform nodes in $\varphi$ with weight 
$w_\varphi = 2\pi/N_\varphi$ (spectrally accurate on smooth periodic 
integrands, allowing us to use significantly less points for the numerical integration). The discretized surface integral is then
\begin{equation}\label{eq:surface-quadrature}
    \int_{\partial\Omega} g(y)\,d\sigma_y 
    \approx 
    \sum_{i=1}^{N_\theta}\sum_{j=1}^{N_\varphi} g(y_{ij})\,w_{ij},
    \qquad
    w_{ij} := a_{\mu,i}\,w_\varphi\,J(\mu_i),
\end{equation}
where $y_{ij} = y(\mu_i, \varphi_j)$ are the boundary quadrature nodes 
and $w_{ij}$ are the combined weights. The volume integral 
\eqref{poisson-3d-vol} is discretized analogously by a tensor-product 
rule of $N_r$ Gauss-Legendre nodes in the radial direction, $N_\mu$ 
Gauss-Legendre nodes in $\mu$, and $N_{\varphi,V}$ trapezoidal nodes in 
$\varphi$, with the singularity at $y = x$ handled by a small distance 
cap (adequate because evaluation points and quadrature nodes are 
generically distinct).

Secondly, given the weakly singular integral kernel of the discrete 
double-layer matrix, the discretization must be controlled to ensure the discrete analogue of identity \eqref{important-identity} is still satisfied. This is of paramount importance, as the Fredholm Neural Network for the BIE assumes non-expansiveness of the integral operator. To ensure this condition, we overwrite the diagonal of $K$ to enforce the identity exactly, i.e.,
\begin{equation}\label{eq:row-sum-correction-3d}
    K_{ii} = \tfrac{1}{2} - \sum_{j \neq i} K_{ij},
\end{equation}
where: 
\begin{equation}
    K_{ij} = w_{ij}\,\frac{(y_j - y_i) \cdot n_j}{4\pi |y_i - y_j|^3}, 
\end{equation}
is the kernel discretization.

For our example, we consider the 3D Poisson PDE
\begin{equation}
    \Delta u(x) = \psi(x), \qquad x \in \Omega,
\end{equation}
on the spheroid 
\begin{equation}
   \Omega = \left\{x \in \mathbb{R}^3 : \frac{x_1^2}{a^2} + 
\frac{x_2^2}{a^2} + \frac{x_3^2}{c^2} < 1\right\},
\end{equation}
with $a = 1$, $c = 1.5$. We consider an
analytic reference solution on this domain, and manufacture the corresponding source term $\psi := \Delta u$. Specifically, we take
\begin{equation}\label{eq:3d-true-soln}
    u(x) = e^{x_1}\sin(x_2)\cos(x_3) + x_1 x_2 x_3,
\end{equation}
for which direct calculation gives
\begin{equation}\label{eq:3d-source}
    \psi(x) = \Delta u(x) = -e^{x_1}\sin(x_2)\cos(x_3),
\end{equation}
and the boundary condition is $f(x) = u(x)|_{\partial \Omega}$.

For the BIE, we use a boundary discretization with $N_\theta = N_\varphi = 50$, and volume discretization  $(N_r, N_\mu, N^{V}_{\varphi}) = (50, 40, 60)$ (note the difference between the two $\phi$-grids: the first use forthe surface integral discretization with $N_{\phi}$ nodes, the second for the volume integral with $N^V_{\phi}$ nodes). We use Krasnosel'skii-Mann  constant $\kappa = 0.5$. Convergence and error metrics with respect to the number of layers $M$ in the PFNN is shown in Fig.~\ref{fig:poisson_3d_forward}. Notice again that machine precision is achieved on the boundary, as expected by the PFNN construction.

\begin{figure}[ht]
    \centering
        \begin{subfigure}[t]{0.45\textwidth}
        \centering
        \includegraphics[width=\textwidth ]{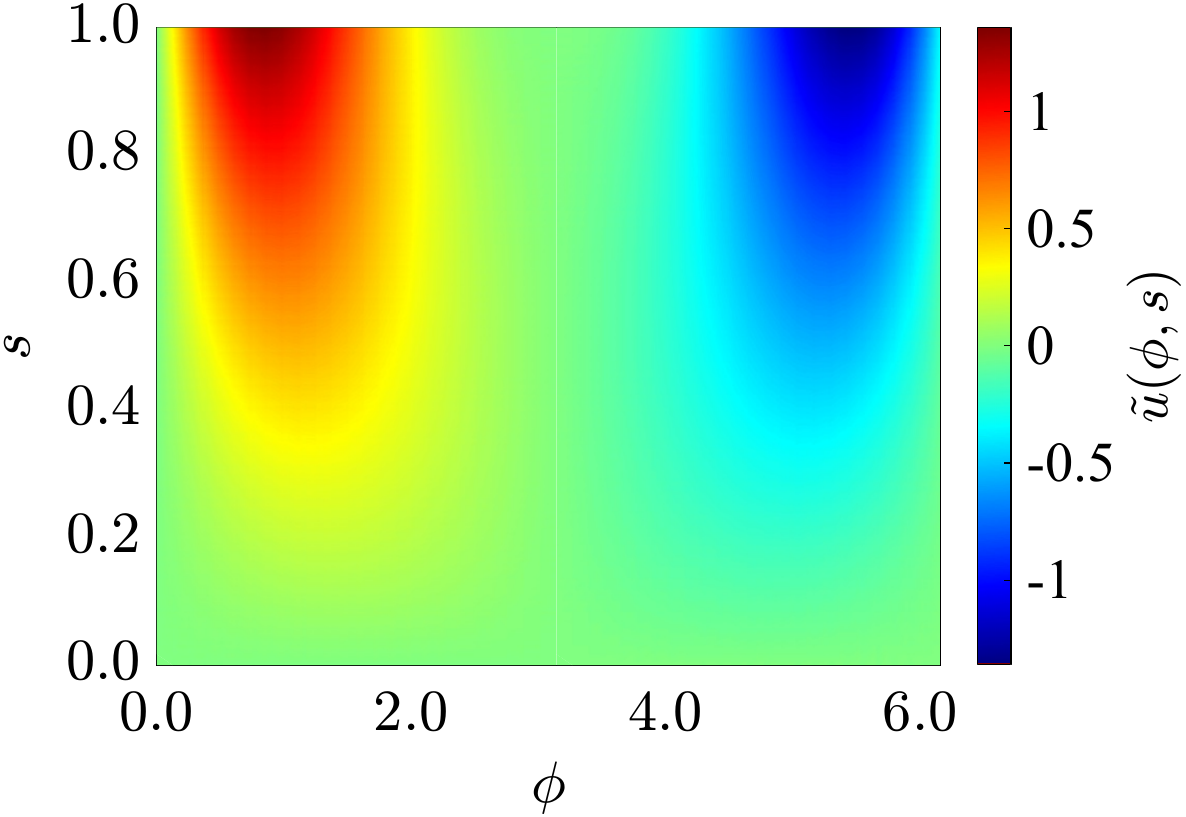}
        \caption{PFNN approximation $\tilde{u}$.}
    \end{subfigure}
    \begin{subfigure}[t]{0.40\textwidth}
        \centering
        \includegraphics[ width=\textwidth]{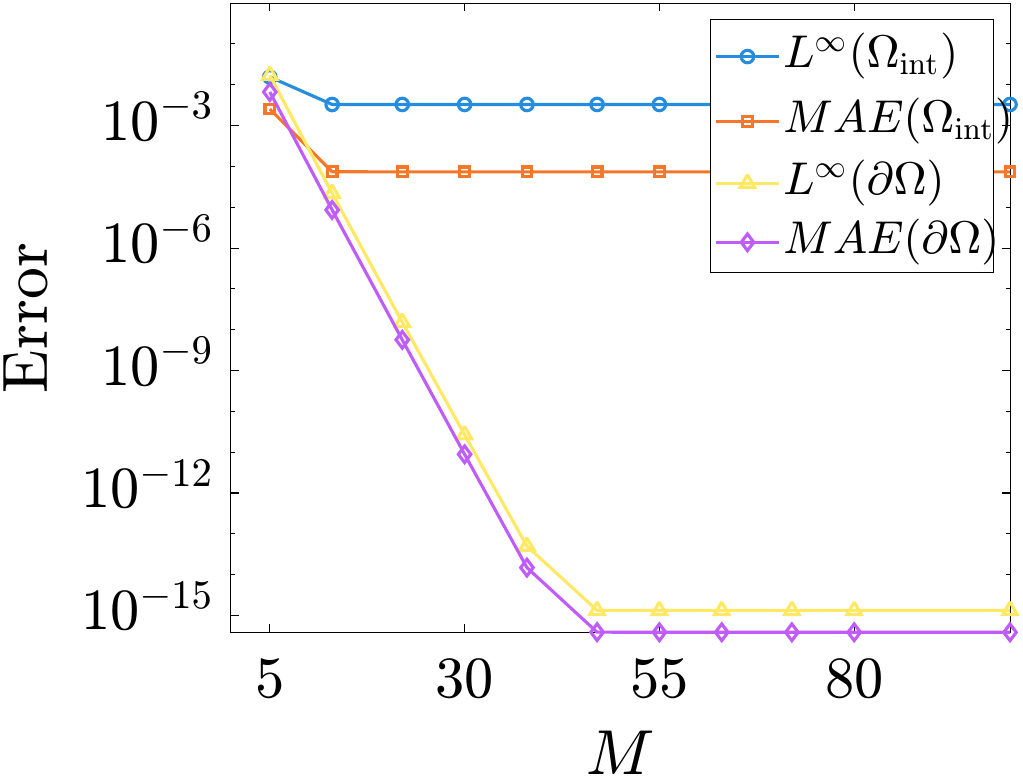}
        \caption{$L^{\infty}$ and MAE of the PFNN approximation in the interior of the domain, denoted $\Omega_{\text{int}}$, and on $\partial \Omega$ as a function of the hidden layers (log-scale).}\label{subfig:helmholtz-error}
    \end{subfigure}
\caption{Results of the PFNN for the 3D Poisson PDE example in section \ref{appendix-poisson-3d}.}
\label{fig:poisson_3d_forward}
\end{figure}


\bibliographystyle{plain}
\bibliography{arXiv_Fredholm_Neural_Networks}

\end{document}